\newcommand{\field}[1]{\mathbb{#1}}
\newtheorem{thm}{Theorem}[section]
\newtheorem{cor}[thm]{Corollary}
\newtheorem{lem}[thm]{Lemma}
\newtheorem{prop}[thm]{Proposition}
\newtheorem{remk}[thm]{Remark}
\numberwithin{equation}{section}
\begin{document}
\title{On two-signed solutions to a second order semi-linear parabolic partial differential equation with non-Lipschitz nonlinearity}
\author{V. Clark and J. C. Meyer}

\maketitle

\begin{abstract}
In this paper, we establish the existence of a 1-parameter family of spatially inhomogeneous radially symmetric classical self-similar solutions to a Cauchy problem for a semi-linear parabolic PDE with non-Lipschitz nonlinearity and trivial initial data. 
Specifically we establish well-posedness for an associated initial value problem for a singular two-dimensional non-autonomous dynamical system with non-Lipschitz nonlinearity.
Additionally, we establish that solutions to the initial value problem converge algebraically to the origin and oscillate as $\eta\to \infty$.
\end{abstract}

\section{Introduction} \label{intro}

In this paper we consider $u : \bar{D}_T \rightarrow \mathbb{R}$ such that $u=u(x,t)$ is continuous and bounded on $ \bar{D}_T:=\mathbb{R} ^{n} \times [0, T] $ and, for fixed $n \in \mathbb{N}$, $u_{t}$, $u_{x_{i}}$ and $u_{x_{i}x_{j}}$ exist and are continuous on $D_T:= \mathbb{R}^{n} \times (0, T]$ for each $1 \leq i,j \leq n$.
Moreover, we suppose that $u$ is a solution to the following Cauchy problem for the second order semi-linear parabolic partial differential equation with non-Lipschitz (H\"older continuous) nonlinearity, given by 
\begin{align}
\label{i1} & u_{t} - \Delta u = u|u|^{p-1} \quad \mathrm{on} \quad D_{T}, \\
\label{i2} & u = 0 \quad \text{ on } \quad \partial D_T , \\
\label{i3} & u \in C^{2,1}( D_{T}) \cap C(\bar{D}_{T}) \cap L^{\infty} (\bar{D}_{T} ) ,
\end{align}
with $T>0$, $0<p<1$ and $\partial D_T:= \mathbb{R}^{n} \times \{ 0 \}$.
Here $C^{2,1}(X)$ denotes the set of functions that are defined on $X$ which are continuously differentiable twice with respect to the spatial variables $x$, and once with respect to the time variable $t$; 
$C(X)$ denotes the set of functions that are defined and continuous on $X$; 
and $L^{\infty} (X)$ denotes the set of functions with bounded essential supremum and infimum. 
We refer to the Cauchy problem in \eqref{i1}-\eqref{i3} as [CP] and $u: \bar{D}_T \rightarrow \mathbb{R}$ satisfying \eqref{i1}-\eqref{i3} as a solution to [CP].
In addition, throughout the paper we denote $(x,t) \in \bar{D}_{T}$ as $(x_{1}, x_{2},\dots, x_{n}, t) $, for $x \in \mathbb{R}^n$, $t\in [0,T]$. 

The existence of spatially inhomogeneous classical self-similar solutions to [CP] with $n=1$ has been considered in detail in \cite{meyer}.
In the paper, via consideration of a self-similar solution structure, a two-dimensional non autonomous dynamical system with non-Lipschitz nonlinearity was analysed and the existence of a two parameter family of homoclinic connections on the equilibrium point $(0,0)$ of the dynamical system, as well as decay bounds and estimates on these connections, were established. 
Herein, we consider an analogously derived dynamical system in $n$-spatial dimensions, for $n\in\mathbb{N}$, and establish the existence of spatially inhomogeneous solutions to \eqref{i1}-\eqref{i3}.
Moreover, we establish a full well-posedness result for the initial value problem for the dynamical system.
Furthermore, we prove that solutions oscillate as $\eta \to \infty$, which gives additional structural information about the aforementioned solutions in \cite{meyer}.
Curiously, oscillation theory of Sturmian type (see, for example \cite{kurt} or \cite{prot}), when combined with algebraic decay bounds on solutions to the initial value problem for the dynamical system as $\eta \to \infty$, obtained here via an adaptation of a technical argument in \cite{weis}, appear to be insufficient to establish oscillation of solutions as $\eta \to \infty$.
Hence, we adopt a novel alternative approach which relies on properties of non-negative solutions to [CP] established in \cite{ag}, to establish that solutions to the initial value problem for the dynamical system oscillate as $\eta \to \infty$.  
  
Qualitative properties of non-negative classical bounded solutions to boundary value problems for \eqref{i1}, have been considered in \cite{ag}, \cite{king}, \cite{cabe}, \cite{Meyer15}, \cite{mey2}, \cite{meyer} and \cite{need} with $0<p<1$ and non-negative initial data, and until [CP] in \cite{meyer} with $n=1$, two-signed solutions were not considered. 
We highlight here that the spatially inhomogeneous solutions constructed in this paper are two signed on $\bar{D}_T$ because any non-negative classical bounded solution to [CP] must be spatially homogeneous \cite[Corollary 2.6]{ag}. 
Following the investigations in \cite{Meyer1} and \cite{mey} it should be noted that local results which guarantee spatial homogeneity of solutions to semi-linear parabolic Cauchy problems, with homogeneous initial data, depend critically on uniqueness results (which do not apply to [CP] with $0<p<1$). 

Non-negative classical bounded solutions to boundary value problems for \eqref{i1} have been extensively investigated with $p \geq 1$, see for example \cite{caz}, \cite{doh}, \cite{esc}, \cite{fuji}, \cite{weis}, \cite{hi}, \cite{naito}, \cite{cham},  \cite{pol}, \cite{shi}, \cite{fb} and \cite{fbw}. 
These consider: conditions required for global solutions to exist and qualitative properties of solutions, such as asymptotic structure as $t \to \infty$ or $|x| \to \infty$; existence or non-existence of one signed solutions; and critical exponents which characterise solution structure to associated boundary value problems. 
Two review articles \cite{deng} and \cite{lev} consider a broad overview of this field of research. 
Additionally solutions to \eqref{i1}-\eqref{i3} with two signed initial data and with $p>1$ have been investigated in \cite{miz} and \cite{miz2}. 
The associated Dirichlet boundary value problem for \eqref{i1} on bounded spatial domains have been considered in \cite{ball} and \cite{cazy}. 
 
The remainder of the paper is structured as follows: in \S \ref{pf} we establish {\emph{a priori}} bounds on solutions to [CP], and subsequently, we formulate a radial self-similar solution structure of [CP] which gives an associated initial value problem for a two-dimensional singular non-autonomous dynamical system with non-Lipschitz nonlinearity studied in the remainder of the paper and referred to throughout as (P) (see Lemma \ref{l23} for details). 
In \S \ref{gwp}, we proceed to show that there exists a local solution to (P), by using a suitable contraction mapping, which can be extended to a global solution via {\emph{a priori}} bounds and multiple applications of the Cauchy-Peano Local Existence Theorem. 
We subsequently establish that there exists a 1-parameter family of solutions to (P) which converges to $(0,0)$ as $\eta \to \infty$. 
We complete the section by establishing well-posedness of (P), via uniqueness and continuous dependence results for $\eta \in [0, \infty)$ and initial data in $(0, (1-p)^{\frac{1}{1-p}}) \times \{0 \}$ (see Theorem \ref{t314} for details).
In \S \ref{qp}, we establish algebraic decay bounds for solutions to (P) as $\eta \to \infty$.
Furthermore we demonstrate that solutions to (P) oscillate as $\eta \to \infty$.
In \S \ref{conc} we summarise the main result established in the paper and explain how related results in \cite{meyer} can be improved.
We also highlight potential extensions to results in \S \ref{gwp}-\S \ref{qp} and related queries that have arisen from the study.

\section{Self-similar solution structure to [CP]} \label{pf}

In this section, we establish {\emph{a priori}} bounds for solutions to [CP]. 
Consequently, we consider a radial self-similar solution structure of solutions to [CP] which yields (P).
To begin, we have,

\begin{lem} \label{l21}
Let $u:\bar{D}_{T} \to\field{R}$ be a solution to [CP]. 
Then,
\begin{equation*} \label{l21a} |u(x,t)| \leq ((1-p)t)^{1/(1-p)} \ \ \  \forall (x,t) \in \bar{D}_{T}. \end{equation*}
\end{lem}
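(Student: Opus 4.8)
The plan is to prove the sharp upper bound $u(x,t)\le ((1-p)t)^{1/(1-p)}$ by comparison with the explicit spatially homogeneous solution, and then deduce the two-sided bound from the oddness of the nonlinearity. First I would observe that $f(s):=s|s|^{p-1}=\mathrm{sgn}(s)|s|^{p}$ is odd and monotone increasing on $\field{R}$, so that if $u$ solves [CP] then so does $-u$: the PDE, the zero initial data \eqref{i2} and the regularity \eqref{i3} are all preserved (note $|-u|=|u|$). Consequently it suffices to establish the one-sided bound for an arbitrary solution; applying it to $-u$ then yields $u\ge -((1-p)t)^{1/(1-p)}$, and the two estimates combine to give the stated absolute-value bound.

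For the upper bound the natural comparison function is the maximal spatially homogeneous solution $V(t)=((1-p)t)^{1/(1-p)}$, which solves $V'=f(V)$, $\Delta V=0$, $V(0)=0$. The difficulty is that $f$ is only H\"older (not Lipschitz) at $s=0$, precisely where $V(0)=u(\cdot,0)=0$, so a direct comparison principle is unavailable. To circumvent this I would introduce, for $\epsilon>0$, the shifted supersolution $v_{\epsilon}(t)=((1-p)(t+\epsilon))^{1/(1-p)}$, which still satisfies $(v_\epsilon)_t-\Delta v_\epsilon=f(v_\epsilon)$ but is now bounded below by $c_\epsilon:=((1-p)\epsilon)^{1/(1-p)}>0$ on $[0,T]$ and strictly dominates $u$ at $t=0$. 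Setting $w:=u-v_\epsilon$, so that $w(\cdot,0)<0$ and $w$ is bounded (as $u\in L^\infty(\bar{D}_T)$ and $v_\epsilon$ is bounded), I would show $w\le 0$ on $\bar{D}_T$ and then let $\epsilon\to 0^{+}$, using $v_\epsilon(t)\to V(t)$ uniformly on $[0,T]$.

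The point that makes the comparison work is that the non-Lipschitz behaviour of $f$ is confined to the region where $w<0$, which is irrelevant for an upper bound. Indeed, on the open set $\{w>0\}$ one has $u>v_\epsilon\ge c_\epsilon>0$, so both arguments lie in $[c_\epsilon,\infty)$ where $f$ is $C^1$ with $f'(s)=ps^{p-1}\le pc_\epsilon^{p-1}=:L_\epsilon$; the mean value theorem then gives the one-sided Lipschitz estimate $w_t-\Delta w=f(u)-f(v_\epsilon)\le L_\epsilon\, w$ throughout $\{w>0\}$. Passing to $\tilde w:=e^{-L_\epsilon t}w$, this becomes $\tilde w_t-\Delta \tilde w\le 0$ on $\{\tilde w>0\}$, so $\tilde w$ is a bounded subsolution of the heat equation wherever it is positive, with $\tilde w(\cdot,0)<0$.

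It remains to run a maximum principle for such a subsolution on the unbounded domain $\field{R}^n\times(0,T]$; this is the main technical obstacle, since without the zero-order term the naive interior argument is inconclusive (the reaction has the destabilising sign, which is exactly why the factor $e^{-L_\epsilon t}$ is needed). I would handle it with a barrier: for $\delta>0$ consider $\tilde w-\delta\psi$ with $\psi(x,t)=|x|^2+(2n+1)t$, which satisfies $\psi_t-\Delta\psi=1>0$ and $\psi\to\infty$ as $|x|\to\infty$. Boundedness of $\tilde w$ forces $\tilde w-\delta\psi$ to attain its supremum at some finite $(x_0,t_0)$; if that supremum were positive then $t_0>0$ (since $\tilde w-\delta\psi\le\tilde w(\cdot,0)<0$ at $t=0$) and, at the interior spatial maximum, $\partial_t(\tilde w-\delta\psi)\ge 0$ and $\Delta(\tilde w-\delta\psi)\le 0$, whereas the subsolution inequality (valid as $\tilde w(x_0,t_0)>0$) gives $\partial_t(\tilde w-\delta\psi)-\Delta(\tilde w-\delta\psi)\le-\delta<0$, a contradiction. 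Hence $\tilde w\le\delta\psi$ for every $\delta>0$, and letting $\delta\to 0^{+}$ yields $\tilde w\le 0$, i.e. $w\le 0$. Sending $\epsilon\to0^{+}$ completes the upper bound, and the oddness reduction finishes the proof.
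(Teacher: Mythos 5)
Your argument is correct, but it takes a genuinely different route from the paper's. The paper disposes of Lemma \ref{l21} in two lines by invoking the theory of maximal and minimal solutions for non-Lipschitz semilinear Cauchy problems: since the initial data of [CP] is spatially homogeneous, the maximal and minimal solutions to [CP] are spatially homogeneous (citing \cite[Proposition 8.31]{mey}), and they are then identified explicitly as $u^{\pm}(x,t)=\pm((1-p)t)^{1/(1-p)}$, so that every solution is sandwiched between them. You instead build the required comparison principle by hand: the oddness reduction to a one-sided bound; the shifted supersolution $v_{\epsilon}(t)=((1-p)(t+\epsilon))^{1/(1-p)}$, which confines both arguments of the nonlinearity to $[c_{\epsilon},\infty)$ where it is Lipschitz with one-sided constant $L_{\epsilon}=pc_{\epsilon}^{p-1}$; the weight $e^{-L_{\epsilon}t}$ to remove the destabilising zero-order term; and the barrier $\psi(x,t)=|x|^{2}+(2n+1)t$ to run a Phragm\'en--Lindel\"of-type maximum principle on the unbounded domain, using the boundedness in \eqref{i3} to force a positive supremum of $\tilde w-\delta\psi$ to be attained at a finite point with $t_{0}>0$, where the subsolution inequality is applicable because $\tilde w(x_{0},t_{0})>0$ and $t_{0}>0$ places the point in $D_T$ where $u$ is differentiable. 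Each step is sound. The trade-off is this: the paper's route is short but not self-contained, since the entire difficulty (comparison in the presence of a non-Lipschitz nonlinearity on an unbounded spatial domain) is delegated to the cited monograph; your route is elementary and self-contained, using nothing beyond \eqref{i1}--\eqref{i3} and calculus, and in effect it re-proves the special case of the extended maximum principles on unbounded domains (cf. \cite{Meyer1}) that underpins the maximal/minimal solution machinery — including the key trick of shifting the comparison function off the non-Lipschitz set at $u=0$, which is precisely what makes the explicit homogeneous solutions extremal.
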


\begin{proof}
Since [CP] has spatially homogeneous initial data, the maximal solution $u^+:\bar{D}_T\to\mathbb{R}$ and minimal solution $u^-:\bar{D}_T\to\mathbb{R}$ to [CP] are spatially homogeneous for $t\in [0,T]$ (see, for example, \cite[Proposition 8.31]{mey} for $n=1$).
We note that $u^\pm :\bar{D}_T\to\field{R}$ given by 
\[ u^\pm(x,t)= \pm ((1-p)t)^{1/(1-p)} \ \ \ \forall (x,t) \in \bar{D}_{T} \]
are the maximal and minimal solutions to [CP], and hence any solution $u:\bar{D}_T\to\mathbb{R}$ to [CP] satisfies $u^- \leq u \leq u^+$ on $\bar{D}_T$, as required. 
\end{proof}

We now determine an initial value problem [IVP] associated with a self-similar solution structure to [CP].
Consider a solution $u: \bar{D}_{T} \to \field{R}$ to [CP] on $\bar{D}_{T}$ of the following form: there exists $w: [0, \infty) \to \field{R}$ such that
\begin{equation} \label{s2a} 
u(x,t)= \begin{cases} w\left(\frac{|x|}{t^{1/2}}\right)t^{\frac{1}{(1-p)}}, & (x,t) \in D_{T} \\ 0, & (x,t) \in \partial D_T. \end{cases} 
\end{equation}
 
We introduce $H:\field{R} \to \field{R}$ given by
\begin{equation} \label{s2l} 
H(w)= \begin{cases} \frac{1}{(1-p)}w -w|w|^{p-1} , & x\in\mathbb{R}\setminus \{ 0 \} \\ 0, & x=0  \end{cases}
\end{equation}
and observe that $H\in C(\field{R}) \cap C^1(\field{R} \setminus \{0\})$. 
We also denote
\begin{equation} \label{s2m} 
M_{H} = \sup_{[0, (1-p)^{1/(1-p)}]} |H| >0\ \text{ and }\ m_{H} = \inf_{[0, (1-p)^{1/(1-p)}]} H <0 . 
\end{equation}
We note here that via Lemma \ref{l21}, any solution to [CP] of the form \eqref{s2a} satisfies
\begin{equation} \label{s2j} 
||w||_\infty \leq (1-p)^{1/(1-p)} .
\end{equation}
We also note that if there exists a solution to [CP] of the form \eqref{s2a}, then $-u$ is also a solution to [CP].

It follows from \eqref{s2j} that $u: \bar{D}_{T} \to \field{R}$ given by \eqref{s2a} is a solution to [CP] (up to symmetry) if and only if there exists a constant $\alpha \in [0,(1-p)^{1/(1-p)}]$ such that $w : [0, \infty) \to \field{R}$ satisfies the following [IVP] for the second order singular non-autonomous ordinary differential equation with non-Lipschitz nonlinearity given as,
\begin{align} 
\label{s2b} & w'' +  \left(\frac{(n-1)}{\eta} + \frac{\eta}{2}\right) w' - H(w) = 0 \ \ \  \forall \eta \in (0,\infty), \\
\label{s2c} & w(0)= \alpha, \quad w'(0)=0, \quad \alpha \in [0,(1-p)^{1/(1-p)}], \\
\label{s2d} & w \in C^{2}([0,\infty)) \cap L^{\infty}([0,\infty)). 
\end{align}
Observe that the condition on $w'(0)$ ensures that $u$ given by \eqref{s2a}, has continuous first spatial derivatives on $\bar{D}_T$ for $t\in [0,T]$.
Moreover, from \eqref{s2b}-\eqref{s2d} it follows that
\begin{equation} \label{s2c'} 
w''(0) = \frac{H(\alpha)}{n} , 
\end{equation}
and hence, $u$ satisfies \eqref{i1} on $D_T$. 
Note that the [IVP] given by \eqref{s2b}-\eqref{s2d} is equivalent to the [IVP] for the singular two-dimensional non-autonomous dynamical system with non-Lipschitz right hand side, given by;
\begin{align}
\label{s2f} & (w)' = w' , \\
\label{s2g} & (w')' = H(w) - \left(\frac{(n-1)}{\eta} + \frac{\eta}{2}\right) w' \ \ \ \forall \eta \in (0,\infty), \\
\label{s2h} & (w(0), w'(0))=\left(\alpha, 0 \right), \quad \alpha \in [0,(1-p)^{1/(1-p)}], \\
\label{s2i} & (w,w') \in C^{1}([0,\infty)) \cap L^{\infty}([0,\infty)). 
\end{align}

Due to the singular term in \eqref{s2b} at $\eta = 0$, we give a specific argument to establish that there exists a solution to \eqref{s2b}-\eqref{s2d}. 
It is also convenient to express the [IVP] given by \eqref{s2b}-\eqref{s2d} as an integral equation, and hence, we have, 

\begin{lem} \label{l23}
The following statements are equivalent
\begin{enumerate}
\item[(a)] $w:[0,\infty) \to \field{R}$ is a solution to the [IVP] given by \eqref{s2b}-\eqref{s2d}.
\item[(b)] $(w,w'):[0,\infty) \to \field{R}^2$ is a solution to the [IVP] given by \eqref{s2f}-\eqref{s2i}.
\item[(c)] $w:[0,\infty) \to \field{R}$ satisfies
\begin{align} 
\label{l23b} & w(\eta) = \alpha + \int_{0}^{\eta} \frac{1}{t^{n-1}e^{\frac{t^2}{4}}} \int_{0}^{t} H(w(s))s^{n-1}e^{\frac{s^2}{4}} dsdt \ \ \ \forall \eta \in [0,\infty) , \\ 
\label{l23b'} & \alpha \in [0,(1-p)^{1/(1-p)}], \\
\label{l23a} & w \in C([0,\infty)) \cap L^{\infty}([0,\infty)) . 
\end{align}
\end{enumerate}
\end{lem}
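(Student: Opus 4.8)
The equivalence of (a) and (b) I would dispose of first, as it is purely a matter of reformulation. The system \eqref{s2f}--\eqref{s2i} is obtained from \eqref{s2b}--\eqref{s2d} by taking the second coordinate to be $w'$, while \eqref{s2f} conversely enforces that the second coordinate \emph{is} the derivative of the first. The regularity statements are consistent, since $w\in C^2([0,\infty))$ holds if and only if $(w,w')\in C^1([0,\infty))$, and the initial data and boundedness conditions coincide. Thus the substance of the lemma is the equivalence of (a) and (c), which I would prove in both directions.

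For (a) $\Rightarrow$ (c), the plan is to read \eqref{s2b} as a first-order linear ODE for $w'$ with integrating factor $\mu(\eta)=\eta^{n-1}e^{\eta^2/4}$, chosen so that $\mu'/\mu=\frac{n-1}{\eta}+\frac{\eta}{2}$. Multiplying \eqref{s2b} by $\mu$ gives $(\mu w')'=\mu H(w)$. I would integrate this from $0$ to $t$, using $w'(0)=0$ together with $\mu(\eta)w'(\eta)\to 0$ as $\eta\to0^+$ (immediate for $n\geq 2$ from $\mu(\eta)\to 0$, and for $n=1$ from $w'(0)=0$) to annihilate the boundary term, obtaining $w'(t)=\mu(t)^{-1}\int_0^t\mu(s)H(w(s))\,ds$. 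Integrating once more from $0$ to $\eta$ and invoking $w(0)=\alpha$ yields \eqref{l23b}, while \eqref{l23a} is immediate from $w\in C^2\cap L^\infty$.

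For (c) $\Rightarrow$ (a), I would set $I(t)=\int_0^t H(w(s))s^{n-1}e^{s^2/4}\,ds$, which is $C^1$ on $[0,\infty)$ because $H\in C(\field{R})$ and $w\in C([0,\infty))$ force $H(w(\cdot))$ to be continuous. The crucial preliminary estimate is the near-origin asymptotics $I(t)\sim \frac{H(\alpha)}{n}t^{n}$ as $t\to0^+$, which shows that the outer integrand $I(t)/(t^{n-1}e^{t^2/4})$ extends continuously to $t=0$ with value $0$. Consequently the right-hand side of \eqref{l23b} is $C^1$, so $w\in C^1([0,\infty))$ with $w(0)=\alpha$, $w'(0)=0$, and $w'(\eta)=I(\eta)/(\eta^{n-1}e^{\eta^2/4})$ for $\eta>0$. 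Differentiating this relation — equivalently, differentiating $\mu w'=I$ and using $I'=\mu H(w)$ — recovers \eqref{s2b} on $(0,\infty)$, so that $w\in C^2((0,\infty))$.

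The main obstacle, and the step on which I would spend the most care, is upgrading this to $w\in C^2([0,\infty))$ at the singular point $\eta=0$. The difficulty is the term $\frac{n-1}{\eta}w'$ in the identity $w''(\eta)=H(w(\eta))-\left(\frac{n-1}{\eta}+\frac{\eta}{2}\right)w'(\eta)$: I need the finite limit $\lim_{\eta\to0^+}\frac{w'(\eta)}{\eta}=\frac{H(\alpha)}{n}$, which again relies on the sharp asymptotics of $I$. Granting this, $w''(\eta)\to H(\alpha)-\frac{(n-1)H(\alpha)}{n}=\frac{H(\alpha)}{n}$ as $\eta\to0^+$, in agreement with \eqref{s2c'}; a standard mean-value-theorem argument then promotes the continuous extension of $w''$ to genuine twice-differentiability at $\eta=0$, giving $w\in C^2([0,\infty))$ and the validity of \eqref{s2b}--\eqref{s2c} on all of $[0,\infty)$. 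Together with $w\in L^\infty$ supplied by (c), this establishes (a) and closes the chain of equivalences.
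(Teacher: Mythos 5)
Your proposal is correct and takes essentially the same approach as the paper: the same integrating factor $\eta^{n-1}e^{\eta^2/4}$ integrated twice for (a) $\Rightarrow$ (c), and for (c) $\Rightarrow$ (a) the same key limit $\lim_{\eta\to 0^+} w'(\eta)/\eta = H(\alpha)/n$ to handle the singular point. The only cosmetic difference is at the last step, where the paper identifies this limit directly as the difference quotient defining $w''(0)$, whereas you deduce $w''(\eta)\to H(\alpha)/n$ and then invoke a mean-value argument to upgrade to twice-differentiability at $\eta=0$; both are valid and rest on the identical asymptotic input.
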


\begin{proof}
It follows immediately that (a) and (b) are equivalent. 
Now, suppose that $w$ satisfies (a).
By multiplying \eqref{s2b} by $e^{\frac{\eta^2}{4}} \eta^{n-1}$ and integrating twice, it follows that $w$ satisfies \eqref{l23b}, and since (a) implies \eqref{l23a}, then $w$ satisfies $(c)$. 
Now suppose $w$ satisfies (c). 
From \eqref{l23b} and \eqref{l23a}, it follows that $w \in C^{1}([0,\infty)) \cap L^{\infty}([0,\infty))$ with 
\begin{equation} \label{l23c} 
w'(\eta) = \frac{1}{\eta^{n-1}e^{\frac{\eta^2}{4}}} \int_{0}^{\eta} H(w(s))s^{n-1}e^{\frac{s^2}{4}}ds \ \ \ \forall \eta \in (0,\infty), 
\end{equation}
\begin{equation*} \label{l23d} 
w(0)=\alpha, \ \ \ w'(0)=0. 
\end{equation*}
Additionally, from \eqref{l23c} it follows that $w \in C^2((0,\infty))$ with
\begin{equation} \label{l23e} 
(\eta^{n-1}e^{\frac{\eta^2}{4}}w'(\eta))'= H(w(\eta))\eta^{n-1}e^{\frac{\eta^2}{4}} \ \ \  \forall \eta \in (0,\infty), 
\end{equation}
and that $w''$ is continuous at $\eta =0$, with
\begin{equation*} \label{l23f} 
w''(0)= \lim_{\eta\to 0^{+}} \frac{w'(\eta)}{\eta} = \frac{H(\alpha)}{n}. 
\end{equation*}
In addition, $w''(\eta)$ satisfies \eqref{l23e} for all $\eta \in (0,\infty)$, so that 
\begin{equation*} \label{l23g} 
w''(\eta) =-\left(\frac{\eta}{2}+\frac{(n-1)}{\eta}\right)w'(\eta) + H(w(\eta)) \ \ \ \forall \eta \in (0,\infty). 
\end{equation*}
Hence $w \in C^2 ([0,\infty)) \cap L^{\infty}([0,\infty))$ and satisfies \eqref{s2b}. 
Thus it follows that $w$ satisfies (a).
Hence (a), (b) and (c) are equivalent, as required. 
\end{proof}

We refer to the equivalent [IVP] given by (a), (b) and (c) in Lemma \ref{l23} as (P).

\section{Well-posedness of (P)} \label{gwp}

In this section, we establish that (P) is well posed in the sense of Hadamard, for initial data $w(0)\in [0, (1-p)^{1/(1-p)})$.

\subsection{Existence}

We first establish a local existence result for solutions to (P) on $[0,\epsilon ]$ via a contraction mapping, and then extend this to an existence result for (P), via multiple applications of the Cauchy-Peano Theorem. 

\begin{thm} \label{t31}
(P) has a unique local solution on $[0,\epsilon]$ with
\begin{equation} \label{t31a} 
\epsilon= \min \left \{ \left(\frac{\alpha}{\sup\limits_{\frac{1}{2} \alpha \leq w \leq \frac{3}{2} \alpha } |H(w)|}\right)^{\frac{1}{2}}, \left(\frac{1}{(1-p)} + p\left(\frac{\alpha}{2}\right)^{p-1} \right)^{-\frac{1}{2}} \right \}, 
\end{equation}
\end{thm}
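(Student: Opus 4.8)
The plan is to reduce (P) to the fixed-point problem for the integral equation \eqref{l23b} via Lemma \ref{l23}, and to solve it by the Banach fixed point theorem on a function space in which $H$ is Lipschitz. Throughout I assume $\alpha>0$ (the case $\alpha=0$ gives the trivial solution $w\equiv 0$ and makes \eqref{t31a} vacuous). On the Banach space $C([0,\epsilon])$ with the supremum norm I consider the closed, nonempty set
\[ X=\Big\{w\in C([0,\epsilon]) : \tfrac{1}{2}\alpha\le w(\eta)\le\tfrac{3}{2}\alpha \ \ \forall\,\eta\in[0,\epsilon]\Big\}, \]
and define $T:X\to C([0,\epsilon])$ by $(Tw)(\eta)=\alpha+\int_0^\eta t^{1-n}e^{-t^2/4}\int_0^t H(w(s))s^{n-1}e^{s^2/4}\,ds\,dt$, so that, by Lemma \ref{l23}, fixed points of $T$ are exactly the solutions of (P) on $[0,\epsilon]$. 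The decisive preliminary estimate is the kernel bound $t^{1-n}e^{-t^2/4}\int_0^t s^{n-1}e^{s^2/4}\,ds\le t/n$, obtained from $e^{s^2/4}\le e^{t^2/4}$ on $[0,t]$; this simultaneously shows that the outer integrand extends continuously to $t=0$ (so $Tw\in C([0,\epsilon])$, absorbing the singularity of \eqref{l23b}) and supplies the quantitative control used below.

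Next I would verify the two hypotheses of the contraction mapping theorem. For the self-mapping property, if $w\in X$ then $|H(w(s))|\le M_\alpha:=\sup_{\frac12\alpha\le w\le\frac32\alpha}|H(w)|$, so the kernel bound gives $|(Tw)(\eta)-\alpha|\le M_\alpha\int_0^\eta (t/n)\,dt=M_\alpha\eta^2/(2n)\le M_\alpha\epsilon^2/(2n)$; since the first term in \eqref{t31a} forces $\epsilon^2\le\alpha/M_\alpha$, this is at most $\alpha/(2n)\le\alpha/2$, whence $Tw\in X$. For the contraction property I use that, on $[\frac12\alpha,\frac32\alpha]$, $H$ is $C^1$ with $H'(w)=\frac{1}{1-p}-pw^{p-1}$, so $|H'(w)|\le \frac{1}{1-p}+p(\frac\alpha2)^{p-1}=:L_\alpha$ because $w\mapsto w^{p-1}$ is decreasing; hence $H$ is Lipschitz on the strip with constant $L_\alpha$. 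For $w_1,w_2\in X$ the same kernel bound yields $|(Tw_1)(\eta)-(Tw_2)(\eta)|\le L_\alpha\|w_1-w_2\|_\infty\,\eta^2/(2n)\le (L_\alpha\epsilon^2/(2n))\|w_1-w_2\|_\infty$, and the second term in \eqref{t31a} forces $L_\alpha\epsilon^2\le 1$, so $T$ is a contraction with constant at most $1/(2n)\le 1/2$. Banach's theorem then produces a unique fixed point $w^\ast\in X$, which is a solution of (P) on $[0,\epsilon]$.

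It remains to upgrade uniqueness in $X$ to uniqueness among all solutions, which is where I expect the only real subtlety. Given any solution $w$ of (P) on $[0,\epsilon]$, I would show it necessarily lies in $X$ by a first-exit argument: set $\eta_0=\sup\{\eta\in[0,\epsilon]:|w(\tau)-\alpha|\le\alpha/2\ \forall\,\tau\in[0,\eta]\}$. On $[0,\eta_0]$ the solution lies in the strip, so the self-mapping estimate applies and gives $|w(\eta_0)-\alpha|\le M_\alpha\eta_0^2/(2n)$. If $\eta_0<\epsilon$, then $\eta_0^2<\epsilon^2\le\alpha/M_\alpha$ strictly, forcing $|w(\eta_0)-\alpha|<\alpha/2$; but continuity together with the definition of $\eta_0$ forces $|w(\eta_0)-\alpha|=\alpha/2$, a contradiction. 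Hence $\eta_0=\epsilon$, $w\in X$, and $w=w^\ast$ by uniqueness of the fixed point. The main obstacle throughout is the non-Lipschitz behaviour of $H$ at the origin: everything hinges on confining the solution to the strip $[\frac12\alpha,\frac32\alpha]$, which stays bounded away from $0$ precisely because $\alpha>0$, and which is exactly what turns the two defining quantities of $\epsilon$ into the self-mapping and contraction bounds; the strict inequality extracted from $\eta_0<\epsilon$ is what rescues the borderline case in the uniqueness step.
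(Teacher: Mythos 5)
Your proposal is correct and takes essentially the same route as the paper: a Banach contraction argument on the strip $\{w\in C([0,\epsilon]):\tfrac{\alpha}{2}\le w\le\tfrac{3\alpha}{2}\}$ for the integral operator from Lemma \ref{l23}, with the two terms of \eqref{t31a} supplying exactly the self-mapping bound (via $\sup|H|$) and the contraction bound (via the Lipschitz constant $\tfrac{1}{1-p}+p(\tfrac{\alpha}{2})^{p-1}$), the only cosmetic difference being your sharper kernel estimate $\eta^2/(2n)$ in place of the paper's $\eta^2/2$. Your closing first-exit argument, showing that any solution of (P) on $[0,\epsilon]$ must remain in the strip, is a welcome explicit justification of the step the paper compresses into its final sentence (uniqueness among all solutions, not merely those in the strip).
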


\begin{proof}
Consider the Banach space $X = ((C[0, \epsilon]), ||\cdot||_{\infty})$ and the closed subset of $X$, given by
\begin{equation} \label{t31d} 
D = \left\{u \in C([0,\epsilon]): \frac{\alpha }{2} \leq u(x) \leq \frac{3\alpha}{2} \right\}. 
\end{equation}
Moreover, we define the operator $T:C([0,\epsilon]) \to C([0, \epsilon])$, given by,
\begin{equation*} \label{t31c} T[w](\eta)= \alpha + \int_{0}^{\eta} \frac{1}{t^{n-1} e^{\frac{t^2}{4}}} \int_{0}^{t} H(w(s))s^{n-1}e^{\frac{s^2}{4}} dsdt \ \ \  \forall w\in C([0,\epsilon ]), \eta \in [0, \epsilon]. \end{equation*}
For $w_{1} \in D$, set $I\in C([0,\epsilon ])$, to be
\begin{equation} \label{t31e} I(\eta ) = \int_{0}^{\eta} \frac{1}{t^{n-1} e^{\frac{t^2}{4}}} \int_{0}^{t} H(w_{1}(s))s^{n-1}e^{\frac{s^2}{4}} dsdt \ \ \  \forall \eta \in [0,\epsilon]. \end{equation}
Observe that
\begin{align}
\nonumber |I(\eta)| &\leq \int_{0}^{\eta} \frac{1}{t^{n-1} e^{\frac{t^2}{4}}} \int_{0}^{t} |H(w_{1}(s))|s^{n-1}e^{\frac{s^2}{4}} dsdt \\
\nonumber &\leq \sup_{\frac{1}{2} \alpha \leq w \leq \frac{3}{2} \alpha} |H(w)| \int_{0}^{\eta} \int_{0}^{t} \frac{s^{n-1}e^{\frac{s^{2}}{4}}}{t^{n-1}e^{\frac{t^{2}}{4}}} dsdt \\
\label{t31f} &\leq \sup_{\frac{1}{2} \alpha \leq w \leq \frac{3}{2} \alpha } |H(w)| \frac{\eta^{2}}{2}
\end{align}
for all $\eta \in [0,\epsilon]$. 
It follows that $|I(\eta)| \leq \frac{\alpha}{2}$ provided that
\begin{equation} \label{t31g} \epsilon < \left(\frac{\alpha}{\sup\limits_{\frac{1}{2} \alpha \leq w \leq \frac{3}{2} \alpha } |H(w)|}\right)^{\frac{1}{2}}. \end{equation}
Since $\epsilon$ given by \eqref{t31a} satisfies \eqref{t31g}, it follows from \eqref{t31f} and \eqref{t31e} that $T[w_{1}] \in D$ for all $w_{1} \in D$ and hence $T[D] \subset D$. 
Now, consider
\begin{equation} \label{t31h} || T[w_{1}]-T[w_{2}] ||_{\infty} \leq \int_{0}^{\epsilon} \frac{1}{t^{n-1} e^{\frac{t^2}{4}}} \int_{0}^{t} s^{n-1} e^{\frac{s^2}{4}} || H(w_{1}(\cdot))- H(w_{2}(\cdot))||_{\infty} dsdt \ \ \  \forall w_{1}, w_{2} \in D. \end{equation}
Observe that $H\in C^1(\mathbb{R}\setminus \{ 0 \})$, given by \eqref{s2l} satisfies
\begin{equation} \label{t31j} |H'(w)| \leq \frac{1}{(1-p)} + p \left(\frac{\alpha}{2} \right)^{p-1} =: C_{\alpha} \ \ \ \forall x \in \left[\frac{\alpha}{2}, \frac{3\alpha}{2} \right]. \end{equation}
Furthermore, via \eqref{t31h}, \eqref{t31j} and \eqref{t31a} it follows that
\begin{align}
\nonumber || T[w_{1}] - T[w_{2}] ||_{\infty} & \leq C_{\alpha} || w_{1}-w_{2} ||_{\infty} \int_{0}^{\epsilon} \frac{1}{t^{n-1} e^{\frac{t^2}{4}}} \int_{0}^{t} s^{n-1}e^{\frac{s^2}{4}} dsdt \\
\nonumber & \leq \frac{C_\alpha \epsilon^2}{2}  || w_1-w_2 ||_\infty \\
\label{3.11} & \leq \frac{1}{2} ||w_1 - w_2||_\infty \ \ \ \forall w_1,w_2\in D.
\end{align}
We conclude from \eqref{3.11} that $T$ is a contraction mapping on $D$, and via the contraction mapping principle, there exists a unique fixed point $w^{*} \in D$ of $T$. 
It follows from \eqref{t31d}, \eqref{t31e} and Lemma \ref{l23} that $w^*$ is the unique solution to (P) restricted to $[0,\epsilon]$, as required.
\end{proof}

We now illustrate that the local solution to (P) on $[0,\epsilon ]$ can be extended to a solution to (P) on $[0,\infty )$.
First introduce $Q:\field{R}^{2} \times (0, \infty) \to \field{R}^{2}$, given by 
\begin{equation}  \label{s3a} Q(w,w',\eta) = \left(w', H(w) -\left(\frac{(n-1)}{\eta} + \frac{\eta}{2}\right)w' \right) \ \ \  \forall (w,w',\eta)\in \mathbb{R}^{2} \times (0,\infty), \end{equation}
and note that $Q \in C(\field{R}^{2} \times (0, \infty))$, but also that $Q$ is not locally Lipschitz continuous on $\field{R}^{2} \times (0, \infty)$ ($Q$ is locally Lipschitz continuous on $\field{R}^{2} \times (0, \infty) \setminus \mathcal{N}$, with $\mathcal{N}$ any neighbourhood of the plane $w=0$).
We also introduce the function $V:\field{R}^{2} \to \field{R}$ defined by, 
\begin{equation} \label{s3b} V(w,w') = \frac{1}{2}(w')^{2} - \frac{1}{2(1-p)}w^{2} + \frac{1}{(1+p)}|w|^{1+p} \ \ \  \forall (w,w') \in \field{R}^{2}. \end{equation}
We observe immediately that $V \in C^{1,1}(\field{R}^2)$ with
\begin{equation} \label{s3d} \nabla V(w,w')= \left( \frac{-w}{(1-p)}+w|w|^{p-1}, w' \right) \quad \forall (w,w') \in \field{R}^{2}. \end{equation}

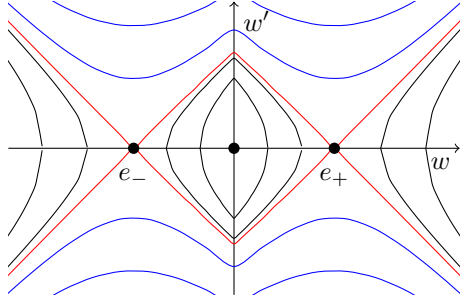
\begin{figure}[h!] \label{fig:1}
\centering
\begin{tikzpicture}[scale=1.5]
	
		\clip (-2,-1.3) rectangle + (4,2.6);

      \draw[->] (-2,0) -- (2,0) node[below left] {$\ w$};
      \draw[->] (0,-1.3) -- (0,1.3) node[below right] {$w'$};

      \draw[domain=-3:3,smooth,variable=\x,blue] plot ({\x},{sqrt(2*(0.95 + (1/(2*(1-0.1))*(\x*\x) - (1/(1+(0.1)))*((\x*\x)^(0.5))^(1+0.1)))});
      \draw[domain=-3:3,smooth,variable=\x,blue] plot ({\x},{-sqrt(2*(0.95 + (1/(2*(1-0.1))*(\x*\x) - (1/(1+(0.1)))*((\x*\x)^(0.5))^(1+0.1)))});
      \draw[domain=-3:3,smooth,variable=\x,blue] plot ({\x},{-sqrt(2*(0.55 + (1/(2*(1-0.1))*(\x*\x) - (1/(1+(0.1)))*((\x*\x)^(0.5))^(1+0.1)))});
      \draw[domain=-3:3,smooth,variable=\x,blue] plot ({\x},{sqrt(2*(0.55 + (1/(2*(1-0.1))*(\x*\x) - (1/(1+(0.1)))*((\x*\x)^(0.5))^(1+0.1)))});
      \draw[domain=-3:0.1,smooth,variable=\x,blue] plot ({\x},{sqrt(2*(1.7 + (1/(2*(1-0.1))*(\x*\x) - (1/(1+(0.1)))*((\x*\x)^(0.5))^(1+0.1)))});

      \draw[domain=-{(1-0.1)^(1/(1-0.1))+0.03}:{(1-0.1)^(1/(1-0.1))-0.03},smooth,variable=\x,red] plot ({\x},{-sqrt(2*(((1-0.1)^(2/(1-0.1)))/(2*(1+0.1)) + (1/(2*(1-0.1))*(\x*\x) - (1/(1+(0.1)))*((\x*\x)^(0.5))^(1+0.1)))}); 
      \draw[domain=-{(1-0.1)^(1/(1-0.1))+0.03}:{(1-0.1)^(1/(1-0.1))-0.03},smooth,variable=\x,red] plot ({\x},{sqrt(2*(((1-0.1)^(2/(1-0.1)))/(2*(1+0.1)) + (1/(2*(1-0.1))*(\x*\x) - (1/(1+(0.1)))*((\x*\x)^(0.5))^(1+0.1)))});

      \draw[domain={(1-0.1)^(1/(1-0.1))+0.03:3},smooth,variable=\x,red] plot ({\x},{sqrt(2*(((1-0.1)^(2/(1-0.1)))/(2*(1+0.1)) + (1/(2*(1-0.1))*(\x*\x) - (1/(1+(0.1)))*((\x*\x)^(0.5))^(1+0.1)))});     
      \draw[domain={(1-0.1)^(1/(1-0.1))+0.03:3},smooth,variable=\x,red] plot ({\x},{-sqrt(2*(((1-0.1)^(2/(1-0.1)))/(2*(1+0.1)) + (1/(2*(1-0.1))*(\x*\x) - (1/(1+(0.1)))*((\x*\x)^(0.5))^(1+0.1)))});
      \draw[domain={-3:-(1-0.1)^(1/(1-0.1))-0.03},smooth,variable=\x,red] plot ({\x},{-sqrt(2*(((1-0.1)^(2/(1-0.1)))/(2*(1+0.1)) + (1/(2*(1-0.1))*(\x*\x) - (1/(1+(0.1)))*((\x*\x)^(0.5))^(1+0.1)))});
      \draw[domain={-3:-(1-0.1)^(1/(1-0.1))-0.03},smooth,variable=\x,red] plot ({\x},{sqrt(2*(((1-0.1)^(2/(1-0.1)))/(2*(1+0.1)) + (1/(2*(1-0.1))*(\x*\x) - (1/(1+(0.1)))*((\x*\x)^(0.5))^(1+0.1)))});

      \draw[domain={1.3:3},smooth,variable=\x,black] plot ({\x},{sqrt(2*( (-1/(2*(1-0.1))*(1.3*1.3) + (1/(1+(0.1)))*((1.3*1.3)^(0.5))^(1+0.1))  + (1/(2*(1-0.1))*(\x*\x) - (1/(1+(0.1)))*((\x*\x)^(0.5))^(1+0.1)))});
      \draw[domain={1.3:3},smooth,variable=\x,black] plot ({\x},{-sqrt(2*( (-1/(2*(1-0.1))*(1.3*1.3) + (1/(1+(0.1)))*((1.3*1.3)^(0.5))^(1+0.1))  + (1/(2*(1-0.1))*(\x*\x) - (1/(1+(0.1)))*((\x*\x)^(0.5))^(1+0.1)))});
      \draw[domain={1.7:3},smooth,variable=\x,black] plot ({\x},{sqrt(2*( (-1/(2*(1-0.1))*(1.7*1.7) + (1/(1+(0.1)))*((1.7*1.7)^(0.5))^(1+0.1))  + (1/(2*(1-0.1))*(\x*\x) - (1/(1+(0.1)))*((\x*\x)^(0.5))^(1+0.1)))});
      \draw[domain={1.7:3},smooth,variable=\x,black] plot ({\x},{-sqrt(2*( (-1/(2*(1-0.1))*(1.7*1.7) + (1/(1+(0.1)))*((1.7*1.7)^(0.5))^(1+0.1))  + (1/(2*(1-0.1))*(\x*\x) - (1/(1+(0.1)))*((\x*\x)^(0.5))^(1+0.1)))});

      \draw[domain={-3:-1.3},smooth,variable=\x,black] plot ({\x},{sqrt(2*( (-1/(2*(1-0.1))*(1.3*1.3) + (1/(1+(0.1)))*((1.3*1.3)^(0.5))^(1+0.1))  + (1/(2*(1-0.1))*(\x*\x) - (1/(1+(0.1)))*((\x*\x)^(0.5))^(1+0.1)))});
      \draw[domain={-3:-1.3},smooth,variable=\x,black] plot ({\x},{-sqrt(2*( (-1/(2*(1-0.1))*(1.3*1.3) + (1/(1+(0.1)))*((1.3*1.3)^(0.5))^(1+0.1))  + (1/(2*(1-0.1))*(\x*\x) - (1/(1+(0.1)))*((\x*\x)^(0.5))^(1+0.1)))});
      \draw[domain={-3:-1.7},smooth,variable=\x,black] plot ({\x},{sqrt(2*( (-1/(2*(1-0.1))*(1.7*1.7) + (1/(1+(0.1)))*((1.7*1.7)^(0.5))^(1+0.1))  + (1/(2*(1-0.1))*(\x*\x) - (1/(1+(0.1)))*((\x*\x)^(0.5))^(1+0.1)))});
      \draw[domain={-3:-1.7},smooth,variable=\x,black] plot ({\x},{-sqrt(2*( (-1/(2*(1-0.1))*(1.7*1.7) + (1/(1+(0.1)))*((1.7*1.7)^(0.5))^(1+0.1))  + (1/(2*(1-0.1))*(\x*\x) - (1/(1+(0.1)))*((\x*\x)^(0.5))^(1+0.1)))});

      \draw[domain={-0.6:0.6},smooth,variable=\x,black] plot ({\x},{-sqrt(2*( (-1/(2*(1-0.1))*(0.6*0.6) + (1/(1+(0.1)))*((0.6*0.6)^(0.5))^(1+0.1))  + (1/(2*(1-0.1))*(\x*\x) - (1/(1+(0.1)))*((\x*\x)^(0.5))^(1+0.1)))});
      \draw[domain={-0.6:0.6},smooth,variable=\x,black] plot ({\x},{sqrt(2*( (-1/(2*(1-0.1))*(0.6*0.6) + (1/(1+(0.1)))*((0.6*0.6)^(0.5))^(1+0.1))  + (1/(2*(1-0.1))*(\x*\x) - (1/(1+(0.1)))*((\x*\x)^(0.5))^(1+0.1)))});
      \draw[domain={-0.3:0.3},smooth,variable=\x,black] plot ({\x},{-sqrt(2*( (-1/(2*(1-0.1))*(0.3*0.3) + (1/(1+(0.1)))*((0.3*0.3)^(0.5))^(1+0.1))  + (1/(2*(1-0.1))*(\x*\x) - (1/(1+(0.1)))*((\x*\x)^(0.5))^(1+0.1)))});
      \draw[domain={-0.3:0.3},smooth,variable=\x,black] plot ({\x},{sqrt(2*( (-1/(2*(1-0.1))*(0.3*0.3) + (1/(1+(0.1)))*((0.3*0.3)^(0.5))^(1+0.1))  + (1/(2*(1-0.1))*(\x*\x) - (1/(1+(0.1)))*((\x*\x)^(0.5))^(1+0.1)))});

      	\fill (-{(1-0.1)^(1/(1-0.1))},0) circle (0.05);
      	\fill ({(1-0.1)^(1/(1-0.1))},0) circle (0.05);
      	\fill (0,0) circle (0.05);
		\node [below] at ({(1-0.1)^(1/(1-0.1))},-0.1) {$e_+$};
		\node [below] at (-{(1-0.1)^(1/(1-0.1))},-0.1) {$e_-$};            

    \end{tikzpicture}
\caption{
A qualitative sketch of the level curves of $V$ is depicted above. 
The equilibria for the dynamical system are located at $(0,0)$ and $e_{\pm}=(\pm (1-p)^{1/(1-p)},0)$.
The level curves $V=c^*(p)$, that intersect $e_\pm$ are depicted in red.
Level curves with $V=c>c^*(p)$ and $V=c<c^*(p)$ are depicted in blue and black respectively.
The region enclosed by the red curves that contains $(0,0)$ is denoted by $\mathcal{H}$.
}  
\end{figure}

We now consider the structure of the level curves of $V$ in $\field{R}^{2}$ defined by
\begin{equation} \label{s3e} V(w,w')=c, \end{equation}
for $-\infty < c < \infty$. 
It is straightforward to establish that the family of level curves of $V$ are qualitatively as depicted in Figure \ref{fig:1}, for $0<p<1$, with $\mathcal{H}$ representing the parts of the level curve connecting $(\pm (1-p)^{1/(1-p)} , 0)$ that enclose the origin.
We denote $c^*(p)$ to be   
\begin{equation} \label{s3g} V(\pm (1-p)^{1/(1-p)},0) = \frac{(1-p)^{2/(1-p)}}{2(1+p)} =c^*(p)>0 .\end{equation}
Inside $\mathcal{H}$, the level curves are simple closed curves concentric with the origin $(0,0)$, and $V$ is increasing from $V=0$ at the origin $(0,0)$, as each level curve is crossed, when moving out from (0,0) to the boundary curve $\mathcal{H}$, on which $V=c^*(p)$. 
Thus, inside $\mathcal{H}$, $V$ has a minimum at $(0,0)$ and is increasing on moving radially away from $(0,0)$ to the boundary $\mathcal{H}$. 
We will focus attention on the level curves of $V$ on and inside $\mathcal{H}$, which have $0 \leq c \leq c^*(p)$.
We denote the interior of the level curve $V(w,w')=c$ by $\Omega_{c}$, with the level curve $V(w,w')=c$ labelled as $\partial \Omega_{c}$, for $ 0 \leq c \leq c^*(p)$. 

Now let $\tilde{w}:[0,\epsilon]\to\mathbb{R}$ be a local solution to (P) (any $\epsilon >0$) and define $F:[0,\epsilon] \to \field{R}$ to be,
\begin{equation} \label{s3i} F(\eta) = V(\tilde{w}(\eta),\tilde{w}' (\eta)) \ \ \  \forall \eta \in [0,\epsilon]. \end{equation}
Then $F \in C^{1}((0,\epsilon])$, and via \eqref{s3i}, \eqref{s3a}-\eqref{s3d}, \eqref{s2f} and \eqref{s2g}, $F$ satisfies, 
\begin{align}
\nonumber F'(\eta) & = \nabla V(\tilde{w}(\eta),\tilde{w}'(\eta))\cdot(\tilde{w}'(\eta), \tilde{w}''(\eta)) \\
\nonumber & = \nabla V(\tilde{w}(\eta),\tilde{w}'(\eta)) \cdot Q(\tilde{w}(\eta), \tilde{w}'(\eta),\eta) \\
\label{s3j} & = -\left(\frac{(n - 1)}{\eta}+\frac{\eta}{2}\right)(\tilde{w}'(\eta))^2 \quad \forall \eta \in (0,\epsilon].
\end{align}
We can now establish the following {\emph{a priori}} bound on solutions to (P), namely

\begin{lem} \label{l32}
Let $\tilde{w}:[0,\epsilon_2]\to\mathbb{R}$ be a local solution to (P) (any $0 \leq \epsilon_{1} < \epsilon_{2}$) with $0 < \alpha < (1-p)^{1/(1-p)}$ and $c=V(\tilde{w}(\epsilon_{1}), \tilde{w}' (\epsilon_{1}))$.
Then, 
\begin{equation*} \label{l32a} (\tilde{w}(\eta),\tilde{w}'(\eta)) \in \Omega_{c} \ \ \  \forall \eta \in (\epsilon_{1},\epsilon_{2}] . \end{equation*}
\end{lem}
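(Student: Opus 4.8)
The plan is to treat $F(\eta)=V(\tilde w(\eta),\tilde w'(\eta))$ from \eqref{s3i} as a Lyapunov function for the flow and to read off the invariance of the sublevel regions $\Omega_c$ directly from the sign of $F'$. By \eqref{s3j} we have $F'(\eta)=-\bigl(\tfrac{n-1}{\eta}+\tfrac{\eta}{2}\bigr)(\tilde w'(\eta))^2\le 0$ for all $\eta\in(0,\epsilon_2]$, because the damping coefficient $\tfrac{n-1}{\eta}+\tfrac{\eta}{2}$ is strictly positive there. Hence $F$ is non-increasing on $[\epsilon_1,\epsilon_2]$, so $F(\eta)\le F(\epsilon_1)=c$ for every $\eta\in[\epsilon_1,\epsilon_2]$; that is, $(\tilde w(\eta),\tilde w'(\eta))\in\overline{\Omega_c}$. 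The substance of the lemma is therefore the \emph{strict} statement that the trajectory enters the open region $\Omega_c$ and does not return to the level curve $\partial\Omega_c$ for $\eta>\epsilon_1$.

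To obtain strictness I would argue by contradiction. Suppose $(\tilde w(\eta_0),\tilde w'(\eta_0))\in\partial\Omega_c$, i.e. $F(\eta_0)=c$, for some $\eta_0\in(\epsilon_1,\epsilon_2]$. Monotonicity of $F$ together with $F(\epsilon_1)=c$ forces $F\equiv c$ on $[\epsilon_1,\eta_0]$, whence $F'\equiv 0$ there, and since the damping coefficient is positive this yields $\tilde w'\equiv 0$, so $\tilde w\equiv w_0$ is constant on $[\epsilon_1,\eta_0]$. Substituting into \eqref{s2g} gives $H(w_0)=0$, so $w_0$ must be one of the equilibria $0,\pm(1-p)^{1/(1-p)}$ and correspondingly $c=V(w_0,0)\in\{0,c^*(p)\}$. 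The value $c=c^*(p)$ is excluded because $F$ is non-increasing already from $\eta=0$, giving $c\le F(0)=V(\alpha,0)$, while a one-line computation shows that $w\mapsto V(w,0)$ is strictly increasing on $[0,(1-p)^{1/(1-p)}]$; thus $V(\alpha,0)<V((1-p)^{1/(1-p)},0)=c^*(p)$ for $0<\alpha<(1-p)^{1/(1-p)}$, as recorded in \eqref{s3g}.

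This leaves the case $w_0=0$, i.e. $c=0$ with $(\tilde w(\epsilon_1),\tilde w'(\epsilon_1))=(0,0)$, which I expect to be the main obstacle: it asserts that the trajectory reaches the origin at the finite value $\eta=\epsilon_1$, and precisely here the failure of $Q$ to be locally Lipschitz on $\{w=0\}$ deprives us of any standard uniqueness argument. I would rule it out by a no-finite-time-extinction estimate for $F$. Near the origin the sublinear term in \eqref{s3b} dominates, so on a neighbourhood one has $V(w,w')\ge\tfrac{1}{2(1+p)}|w|^{1+p}$, giving $w^2\le\bigl(2(1+p)F\bigr)^{2/(1+p)}$ with exponent $2/(1+p)>1$, so that $w^2=o(F)$ as $F\to 0^+$. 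Combining this with the elementary bound $(\tilde w')^2\le 2F+\tfrac{1}{1-p}\tilde w^2$ and feeding it into \eqref{s3j} produces a linear differential inequality of the form $F'\ge -K(\eta)F$, valid once $F$ is small, with $K$ continuous on the relevant compact $\eta$-interval. Integrating this Gronwall inequality backwards from the earliest $\eta$ at which the trajectory sits at the origin forces $F$ to be strictly positive immediately to its left, contradicting the value $0$ there; pushing the argument back is obstructed only at $\eta=0$, where $F(0)=V(\alpha,0)>0$. This contradiction eliminates $c=0$, completes the exclusion of $\partial\Omega_c$, and establishes $(\tilde w(\eta),\tilde w'(\eta))\in\Omega_c$ for all $\eta\in(\epsilon_1,\epsilon_2]$.
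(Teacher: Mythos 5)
Your proof is correct, and its core is the paper's own: treat $F=V(\tilde w,\tilde w')$ as a Lyapunov function and read monotone decrease off \eqref{s3j}. The genuine difference is how strictness is obtained. The paper's proof is one line: from \eqref{s2c'} and \eqref{s2g} it notes $\tilde w''(0)<0$, asserts that $F'<0$ almost everywhere on $(0,\epsilon_2)$, and concludes $F(\eta)<F(0)$. That assertion is sound wherever $H(\tilde w)\neq 0$ (a zero of $\tilde w'$ there is isolated, since $\tilde w''=H(\tilde w)$ at such a zero), and the equilibria $(\pm(1-p)^{1/(1-p)},0)$ are excluded because $F\leq F(0)=V(\alpha,0)<c^*(p)$; but it silently presumes the trajectory never reaches $(0,0)$. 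If it did, monotonicity of $F$ together with positivity of $V$ near the origin would force it to remain there, so $F'\equiv 0$ on a set of positive measure, and worse, the lemma's conclusion would then fail for $\epsilon_1$ beyond that time (one would have $c=0$, and $\Omega_0$ has empty interior). Your backward-Gronwall, no-finite-time-extinction step --- $w^2=o(F)$ near the origin since $V\geq \tfrac{1}{2(1+p)}|w|^{1+p}$ there, hence $F'\geq -K(\eta)F$ with $K$ bounded on compact subintervals of $(0,\infty)$, hence $F$ cannot vanish at any finite $\eta$ given $F>0$ to the left --- is exactly what eliminates this scenario, so your longer route closes a gap that the paper's proof glosses over, in addition to dispatching $c=c^*(p)$ via strict monotonicity of $V(\cdot,0)$ on $[0,(1-p)^{1/(1-p)}]$. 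The one gloss you share with the paper: $F<c$ only places the trajectory in the sublevel set $\{V<c\}$, which also has unbounded components outside $\mathcal{H}$; to land in $\Omega_c$ one should add the (easy) connectedness remark that a continuous trajectory starting on $\partial\Omega_c$ and remaining in $\{V\leq c\}$ cannot jump to another component.
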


\begin{proof}
Let $\epsilon_{1} = 0$ and note that
\begin{equation} \label{l32b} 0<c=V(\alpha,0) \leq c^*(p). \end{equation}
Via \eqref{s2c'} and \eqref{s2g}, we have $\tilde{w}''(0)<0$. 
Moreover, it follows from \eqref{s3j} that, $F'(\eta)<0$ almost everywhere on $(0,\epsilon_2$) with respect to the Lebesgue measure, and hence
\begin{equation} \label{l32c} F(\eta)<F(0) \ \ \ \forall \eta \in (0,\epsilon_{2}]. \end{equation}
Therefore, via \eqref{l32c}, \eqref{l32b} and \eqref{s3i},
\begin{equation*} \label{l32d} V(\tilde{w}(\eta),\tilde{w}'(\eta))<c \ \ \ \forall \eta \in (0,\epsilon_{2}], \end{equation*}
as required. 
The result follows similarly on the interval $(\epsilon_{1}, \epsilon_{2}]$ with $0 < \epsilon_1 < \epsilon_2$.
\end{proof}
We now have:

\begin{lem} \label{t33}
For $0 < \alpha < (1-p)^{1/(1-p)}$, (P) has a local solution $\tilde{w}:[0,\epsilon]\to\mathbb{R}$ (any $\epsilon >0$).
Moreover, these local solutions satisfy $(\tilde{w}(\eta),\tilde{w}'(\eta)) \in \Omega_{c}$ for all $\eta \in (0,\epsilon]$ with $c=V(\alpha,0)$.
\end{lem}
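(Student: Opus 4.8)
The plan is to combine the short-time existence from Theorem \ref{t31} with the \emph{a priori} confinement from Lemma \ref{l32} and a continuation argument. First I would invoke Theorem \ref{t31} to obtain a (unique) local solution $\tilde w$ on $[0,\epsilon_0]$, where $\epsilon_0$ is the quantity in \eqref{t31a}. This step resolves the only genuine difficulty at the left endpoint, namely the singular coefficient $(n-1)/\eta$ in \eqref{s2b} at $\eta=0$, which the contraction mapping was specifically tailored to absorb. Applying Lemma \ref{l32} with $\epsilon_1=0$ then shows $(\tilde w(\eta),\tilde w'(\eta))\in\Omega_c$ for $\eta\in(0,\epsilon_0]$, with $c=V(\alpha,0)\in(0,c^*(p)]$, which already establishes the ``moreover'' assertion on this initial interval.

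Next I would extend $\tilde w$ to the right. For $\eta$ bounded away from $0$ the right-hand side $Q$ of \eqref{s3a} is continuous on $\field{R}^2\times(0,\infty)$, so the Cauchy--Peano theorem produces, from the data $(\tilde w(\epsilon_0),\tilde w'(\epsilon_0))$ at $\eta=\epsilon_0>0$, a continuation of $(\tilde w,\tilde w')$ onto a larger interval. The crucial point is that any such continuation remains trapped: by Lemma \ref{l32} (now with $\epsilon_1=\epsilon_0$) together with the monotonicity of the energy $F$ in \eqref{s3j}, which gives $V(\tilde w(\epsilon_0),\tilde w'(\epsilon_0))\leq c$ and hence containment in $\Omega_c$ by the nesting of the level sets inside $\mathcal H$, the extended trajectory still lies in $\Omega_c$.

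To reach an arbitrary $\epsilon>0$, I would let $[0,\beta)$ be the maximal interval of existence of a continuation confined to $\Omega_c$ and argue $\beta=\infty$. Since $\bar\Omega_c$ is bounded (it is the closure of the interior of a simple closed level curve inside $\mathcal H$, see Figure \ref{fig:1}), the set $K=[\epsilon_0,\beta]\times\bar\Omega_c$ is a compact subset of $\field{R}^2\times(0,\infty)$ on which the continuous map $Q$ is bounded, and $(\eta,\tilde w(\eta),\tilde w'(\eta))\in K$ for all $\eta\in[\epsilon_0,\beta)$. If $\beta$ were finite, the boundedness of $Q$ on $K$ would force $\lim_{\eta\to\beta^-}(\tilde w(\eta),\tilde w'(\eta))$ to exist; moreover $F$ is non-increasing and bounded below by $0$ on $\Omega_c$, so this limit has $V<c$ and thus lies in the open set $\Omega_c$. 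The standard continuation theorem then lets me reapply Cauchy--Peano at $\eta=\beta$ to extend past $\beta$, contradicting maximality. Hence $\beta=\infty$, and restricting the resulting solution to $[0,\epsilon]$ gives the claim, with confinement to $\Omega_c$ supplied throughout by Lemma \ref{l32}.

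I expect the main obstacle to be book-keeping in the continuation step rather than any deep difficulty. One must check that the concatenation of the contraction-mapping solution with its Cauchy--Peano extension is genuinely a solution of (P) in the sense of Lemma \ref{l23}, in particular that it is $C^1$ across $\eta=\epsilon_0$ (which holds since both pieces satisfy \eqref{l23c}), and that the compact trap $\bar\Omega_c$ is uniform along the whole trajectory so that the extension increments do not shrink to zero before $\beta$. The non-Lipschitz behaviour of $Q$ near $w=0$ is not an obstruction here, as Cauchy--Peano requires only continuity; it will matter only later, when uniqueness of solutions is at stake.
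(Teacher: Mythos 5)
Your proposal is correct and follows essentially the same route as the paper: Theorem \ref{t31} handles the singular endpoint $\eta=0$, Lemma \ref{l32} supplies the \emph{a priori} confinement in $\Omega_c$, and Cauchy--Peano extends the solution away from the origin using boundedness of $Q$ on a compact set. The only difference is bookkeeping: the paper applies Cauchy--Peano repeatedly with the uniform step $\delta=\bigl(\max_{\mathcal{X}}|Q|\bigr)^{-1}$ over the compact set $\mathcal{X}=\{|w|\leq (1-p)^{1/(1-p)},\ |w'|\leq \sqrt{2c^*(p)},\ \epsilon_1\leq\eta\leq\epsilon\}$, reaching any fixed $\epsilon$ in finitely many steps, whereas you phrase the same compactness argument as a maximal-interval continuation showing $\beta=\infty$.
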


\begin{proof}
If $\alpha=0$ or $\alpha=(1-p)^{1/(1-p)}$, then there exists an equilibrium solution to (P) on $[0,\infty)$.
Alternatively, by Theorem \ref{t31} there exists $\epsilon_{1}>0$ (dependent on $\alpha$) such that (P) has a solution on $[0,\epsilon_{1}]$. 
Moreover, via Lemma \ref{l32}, if $0<\alpha<(1-p)^{1/(1-p)}$, (P) is {\emph{a priori}} bounded on $[0,\epsilon]$ (any $\epsilon >0$). 
Without loss of generality, suppose that $\epsilon >\epsilon_{1}>0$. 
Since $Q$ given by \eqref{s3a} is bounded on the set
\begin{equation*} \label{t33a} \mathcal{X} \subset \field{R}^3 : \mathcal{X} = \left\{ (w,w',\eta): |w| \leq (1-p)^{1/(1-p)},\ |w'| \leq \sqrt{2c^*(p)},\ \epsilon_{1} \leq \eta \leq \epsilon \right\} \end{equation*}
we can apply the Cauchy-Peano Local Existence Theorem \cite[Chapter 1, Theorem 2.1]{codd} repeatedly with
\begin{equation*} \label{t33b} \delta = \left( \max_{(w,w',\eta) \in \mathcal{X}} |Q(w,w',\eta)|\right)^{-1}, \end{equation*}
to establish that there exists a solution to (P) restricted to $[0,\epsilon]$. 
Since $\epsilon >0$ is arbitrary, the result follows, as required.
\end{proof}

\begin{thm} \label{c34}
For $0 < \alpha < (1-p)^{1/(1-p)}$, (P) has a solution $\tilde{w}:[0,\infty ) \to \mathbb{R}$. 
Moreover, these solutions to (P) satisfy $(\tilde{w}(\eta),\tilde{w}'(\eta)) \in \Omega_{c}$ for all $\eta \in (0,\infty)$, with $c=V(\alpha,0)$.
\end{thm}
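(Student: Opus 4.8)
The plan is to promote the family of finite-interval solutions supplied by Lemma \ref{t33} into a single solution defined on all of $[0,\infty)$, using the \emph{a priori} confinement of Lemma \ref{l32} to rule out any obstruction to continuation. The existence part of Lemma \ref{t33} already produces, for each fixed $\epsilon>0$, a solution on $[0,\epsilon]$; what remains is to manufacture one solution valid simultaneously for every $\eta$, and then to record that Lemma \ref{l32} pins its graph inside $\Omega_c$.

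First I would take, via Lemma \ref{t33} with $\epsilon=1$, a solution $\tilde w$ on $[0,1]$ whose graph lies in $\bar\Omega_c$, where $c=V(\alpha,0)$. I would then extend by induction on integers $N\ge 1$: assuming a solution has been constructed on $[0,N]$ with graph in $\bar\Omega_c$, I would continue it across $[N,N+1]$. On the compact slab $\mathcal{X}_N=\{(w,w',\eta): (w,w')\in\bar\Omega_c,\ N\le\eta\le N+1\}$ the right-hand side $Q$ of \eqref{s3a} is continuous and bounded --- crucially $\eta$ is here bounded away from the singularity at $0$ --- so the Cauchy--Peano Theorem may be applied repeatedly with a fixed step $\delta_N=(\max_{\mathcal{X}_N}|Q|)^{-1}>0$, exactly as in the proof of Lemma \ref{t33}, taking each successive endpoint as the initial data for the next step. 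Lemma \ref{l32} guarantees that every such extension stays in $\bar\Omega_c$, so the step size never degenerates and finitely many steps reach $\eta=N+1$. Because each Cauchy--Peano step starts from the previous endpoint, the pieces glue into a single solution on $[0,N+1]$ that agrees with the former one on $[0,N]$.

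Iterating over $N=1,2,3,\dots$ yields a nested, consistent family whose union is a solution $\tilde w$ of (P) on $[0,\infty)$. Finally, applying Lemma \ref{l32} with $\epsilon_1=0$ and $\epsilon_2$ an arbitrary finite value gives $(\tilde w(\eta),\tilde w'(\eta))\in\Omega_c$ for all $\eta\in(0,\epsilon_2]$; since $\epsilon_2$ is arbitrary, the confinement holds for all $\eta\in(0,\infty)$, which is the second assertion.

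The step I expect to be the main obstacle is the absence of a single Cauchy--Peano step size valid over all of $[0,\infty)$: the coefficient $\tfrac{\eta}{2}$ in $Q$ forces $\max|Q|\to\infty$ as $\eta\to\infty$, so $\delta_N\to 0$ and one cannot reach infinity in a uniformly stepped sweep. The resolution, as above, is to localise the continuation to successive compact $\eta$-slabs $[N,N+1]$: on each slab $Q$ is bounded, using both the confinement of $(w,w')$ to $\bar\Omega_c$ from Lemma \ref{l32} and the fact that $\eta$ lies in a compact interval away from $0$, which restores a positive step $\delta_N$ and lets the induction close. A secondary point worth checking is that the glued object is genuinely one solution rather than merely a solution on each $[0,N]$: this is automatic here, since each extension is built from the endpoint of the previous stage and is therefore consistent by construction --- in contrast to naively letting $\epsilon\to\infty$ in Lemma \ref{t33}, where the finite-interval solutions are produced independently and need not coincide beyond the region of uniqueness.
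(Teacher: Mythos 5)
Your proposal is correct and takes essentially the same route as the paper: the continuation mechanism you describe (a priori confinement from Lemma \ref{l32} plus repeated Cauchy--Peano steps with a fixed positive step size on compact $\eta$-slabs where $Q$ is bounded and $\eta$ is bounded away from the singularity) is exactly the mechanism inside the paper's proof of Lemma \ref{t33}, and the paper then deduces Theorem \ref{c34} in one line because $\epsilon>0$ in Lemma \ref{t33} is arbitrary. In fact your nested, endpoint-to-endpoint construction is slightly more careful than the paper's deduction: it explicitly glues the successive extensions into a single solution on $[0,\infty)$, which addresses the consistency issue you flag --- solutions produced independently for different $\epsilon$ need not agree beyond the interval of local uniqueness, and this cannot be repaired by citing uniqueness since Proposition \ref{p37} is proved only later and itself invokes Theorem \ref{c34}.
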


\begin{proof}
The result follows directly from Lemmas \ref{l32} and \ref{t33}, since $\epsilon>0$ in Lemma \ref{t33} is arbitrary.
\end{proof}

\subsection{Uniqueness}
To begin this subsection we consider (P) with $\alpha=0$.

\begin{remk} \label{r35}
Let $\tilde{w}:[0,\infty ) \to \mathbb{R}$ be any solution to (P) restricted to $(0,\epsilon]$ with $\alpha=0$. 
It follows from \eqref{s3d}, \eqref{s3i} and \eqref{s3j} that
\begin{equation*} \label{r35a} V(\tilde{w}(\eta),\tilde{w}'(\eta))=F(\eta) \leq F(0)=V(0,0)=0 \ \ \  \forall \eta \in (0,\epsilon]. \end{equation*}
Thus $(\tilde{w}(\eta),\tilde{w}'(\eta)) \in \mathcal{S}$ for all $\eta \in (0,\epsilon]$, with $\mathcal{S}$ defined as the connected subset of 
\begin{equation*} \label{r35b} \{ (w,w') \in \field{R}^2 : V(w,w') \leq 0 \}\end{equation*}
which contains $(0,0)$. 
Hence $\mathcal{S}=\{(0,0)\}$ and so $(\tilde{w}(\eta),\tilde{w}'(\eta))=(0,0)$ for all $\eta \in (0,\epsilon]$. 
We conclude that the unique solution to (P) with $\alpha=0$ is given by the equilibrium solution $\tilde{w}\equiv 0$.
\end{remk}

Before we can establish a uniqueness result for (P), we require bounds on solutions to (P) when the solution is in a neighbourhood of the plane $w=0$. 

\begin{prop} \label{p36}
Let $w:[0, \infty) \to \field{R}$ be a solution to (P) such that $(w(\bar{\eta}),w'(\bar{\eta}))= (0, \beta) \in \Omega_{c^*(p)}$ with $\beta >0$. 
Then, 
\begin{equation*} \label{p36c} (1-p)^{1/(1-p)} \geq w(\eta) \geq \frac{\beta}{2}(\eta-\bar{\eta}), \ \ \  \frac{\beta}{2} \leq w'(\eta) \leq \beta \ \ \  \forall [\bar{\eta}, \bar{\eta} + \eta_{\beta}] ,
\end{equation*}
with 
\begin{equation} \label{p36a} \eta_{\beta}= \min \left \{\left( \frac{8}{7} \right)^{1/(n-1)} \bar{\eta}, \sqrt{\bar{\eta}^2 -4 \log \left(\frac{6}{7} \right)}, \bar{\eta} -\frac{\beta}{4m_{H}}, \frac{(1-p)^{1/(1-p)}}{\beta} \right \}
\end{equation}
and $m_H$ given by \eqref{s2m}.  
\end{prop}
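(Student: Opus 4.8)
The plan is to pass to the integrating-factor form of the equation and run a continuation (bootstrap) argument to the right of $\bar\eta$. Integrating \eqref{l23e} from $\bar\eta$ to $\eta$ and using $w'(\bar\eta)=\beta$ produces the representation
\begin{equation*}
w'(\eta)=\beta\Big(\frac{\bar\eta}{\eta}\Big)^{n-1}e^{(\bar\eta^2-\eta^2)/4}+\frac{1}{\eta^{n-1}e^{\eta^2/4}}\int_{\bar\eta}^{\eta}H(w(s))\,s^{n-1}e^{s^2/4}\,ds ,
\end{equation*}
which is the identity behind every estimate below. I would set
\begin{equation*}
\eta^{*}=\sup\Big\{\eta\in[\bar\eta,\bar\eta+\eta_\beta]:\ 0\le w(s)\le (1-p)^{1/(1-p)}\ \text{and}\ \tfrac{\beta}{2}\le w'(s)\le\beta\ \ \forall\,s\in[\bar\eta,\eta]\Big\}.
\end{equation*}
Because $w(\bar\eta)=0$, $w'(\bar\eta)=\beta$ and, by \eqref{s2g}, $w''(\bar\eta)=-\big(\tfrac{n-1}{\bar\eta}+\tfrac{\bar\eta}{2}\big)\beta<0$, the orbit immediately enters the interior of this set, so $\eta^{*}>\bar\eta$; the aim is to show $\eta^{*}=\bar\eta+\eta_\beta$.

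The decisive observation is that on $[\bar\eta,\eta^{*}]$ the constraint $0\le w\le (1-p)^{1/(1-p)}$ forces $m_H\le H(w)\le 0$, the upper bound because $H$ in \eqref{s2l} is nonpositive exactly on $[0,(1-p)^{1/(1-p)}]$, the lower bound being the definition of $m_H$ in \eqref{s2m}. Inserting $H\le 0$ into the representation annihilates the integral and yields $w'(\eta)\le\beta(\bar\eta/\eta)^{n-1}e^{(\bar\eta^2-\eta^2)/4}\le\beta$, strictly for $\eta>\bar\eta$, which settles the upper bound on $w'$. For the lower bound I would estimate the two terms separately: the homogeneous term exceeds $\tfrac34\beta$ as soon as $(\bar\eta/\eta)^{n-1}\ge\tfrac78$ and $e^{(\bar\eta^2-\eta^2)/4}\ge\tfrac67$, which is exactly what the entries $(8/7)^{1/(n-1)}\bar\eta$ and $\sqrt{\bar\eta^2-4\log(6/7)}$ in \eqref{p36a} secure; and, using $H\ge m_H$ together with the monotonicity of $s\mapsto s^{n-1}e^{s^2/4}$, the forcing term is bounded below by $m_H(\eta-\bar\eta)\ge-\tfrac14\beta$, which is precisely what the entry $\bar\eta-\tfrac{\beta}{4m_H}$ delivers. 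Summing gives $w'(\eta)\ge\tfrac34\beta-\tfrac14\beta=\tfrac12\beta$, again strictly before the endpoint.

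The two bounds on $w$ then come by integration. From $w'\ge\tfrac{\beta}{2}$ and $w(\bar\eta)=0$ one gets $w(\eta)=\int_{\bar\eta}^{\eta}w'\ge\tfrac{\beta}{2}(\eta-\bar\eta)$, which is the stated lower bound and in particular keeps $w>0$; the cap $w(\eta)\le\beta\eta\le (1-p)^{1/(1-p)}$ holds strictly as long as $\eta$ is controlled by the last entry $\tfrac{(1-p)^{1/(1-p)}}{\beta}$ (the confinement to $\Omega_{c^{*}(p)}$ from Lemma \ref{l32} gives the same ceiling, but the sign $w\ge 0$ still has to be propagated, so a single continuation is cleanest). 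With all four inequalities strict at $\eta^{*}$ whenever $\eta^{*}<\bar\eta+\eta_\beta$, continuity of $(w,w')$ extends the defining set beyond $\eta^{*}$, contradicting maximality; hence $\eta^{*}=\bar\eta+\eta_\beta$ and the bounds hold on the whole interval. I expect the genuine obstacle to be this closure: the control of $H(w)$ presupposes $w\in[0,(1-p)^{1/(1-p)}]$, which is one of the conclusions, so the reasoning is irreducibly circular unless organised as a continuation, and the four thresholds in \eqref{p36a} must be calibrated so that the polynomial decay factor, the Gaussian factor, the $m_H$-forcing and the growth of $w$ each absorb a prescribed fraction of $\beta$.
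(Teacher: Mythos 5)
Your proof is correct, and its computational core is identical to the paper's: you integrate \eqref{l23e} from $\bar\eta$ to obtain exactly the representation \eqref{p36d}, split $w'$ into the homogeneous term and the forcing term, and calibrate the entries of \eqref{p36a} exactly as the paper does (the first two keep the homogeneous term above $\tfrac34\beta$ since $\tfrac78\cdot\tfrac67=\tfrac34$, the third keeps the forcing term above $-\tfrac14\beta$, the fourth caps $w$ at $(1-p)^{1/(1-p)}$ upon integrating $w'\le\beta$). Where you differ is the logical wrapper, and here your closing claim --- that the argument is ``irreducibly circular unless organised as a continuation'' --- is mistaken. The paper runs no continuation at all: its estimate \eqref{p36f} uses only the two-sided bound $|H(w(s))|\le M_H=-m_H$, which requires $|w|\le(1-p)^{1/(1-p)}$ but no sign information on $w$, and that two-sided bound is available \emph{a priori} because Lemma \ref{l32} (applied from $\epsilon_1=\bar\eta$, as packaged in Theorem \ref{c34}) confines $(w,w')$ to $\Omega_{c^*(p)}$ for all $\eta\ge\bar\eta$. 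The paper thus proves $w'>\beta/2$ outright in \eqref{p36g}, integrates to get \eqref{p36h}, and only then uses positivity of $w$ and $w'$ together with confinement (so $H(w)\le 0$) in \eqref{s2g} to conclude $w''<0$, whence $w'\le w'(\bar\eta)=\beta$, which is \eqref{p36j}. You instead obtain $w'\le\beta$ directly by discarding the nonpositive integral, which is equally valid and arguably cleaner, and your bootstrap has the mild virtue of not invoking Lemma \ref{l32}; but it is an alternative organisation, not a necessity.

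Two smaller points. First, a slip: the cap should read $w(\eta)\le\beta(\eta-\bar\eta)\le\beta\eta_\beta\le(1-p)^{1/(1-p)}$, not $w(\eta)\le\beta\eta\le(1-p)^{1/(1-p)}$; the latter chain fails when $\bar\eta$ itself is large. Second, you inherit an imprecision of the paper itself, so it is not counted against you: the first three entries of \eqref{p36a} deliver the inequalities $(\bar\eta/\eta)^{n-1}\ge\tfrac78$, $e^{(\bar\eta^2-\eta^2)/4}\ge\tfrac67$ and $m_H(\eta-\bar\eta)\ge-\tfrac{\beta}{4}$ only when they are read as bounds on the absolute position $\eta$ (e.g.\ $\eta\le(8/7)^{1/(n-1)}\bar\eta$), whereas the proposition applies $\eta_\beta$ as an offset, $\eta\le\bar\eta+\eta_\beta$; read consistently as offsets, those entries should be $\bigl((8/7)^{1/(n-1)}-1\bigr)\bar\eta$, $\sqrt{\bar\eta^2-4\log(6/7)}-\bar\eta$ and $-\tfrac{\beta}{4m_H}$.
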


\begin{proof}
Let $w:[0, \infty) \to \field{R}$ be any solution (P) which satisfies $(w(\bar{\eta}), w'(\bar{\eta}))=(0,\beta)$. 
It follows from Lemma \ref{l23} and an integration of \eqref{l23e} that
\begin{equation} \label{p36d} w'(\eta) = \beta \left(\frac{\bar{\eta}}{\eta} \right)^{n-1} e^{\frac{\bar{\eta}^2 -\eta^2}{4}} + \frac{1}{\eta^{n-1} e^{\frac{\eta^2}{4}}} \int_{\bar{\eta}}^{\eta} H(w(s)) s^{n-1} e^{\frac{s^2}{4}} ds \ \ \  \forall \eta \in [\bar{\eta}, \infty).
\end{equation}
Since 
\begin{equation*} \label{p36e} \left ( \frac{\bar{\eta}}{\eta} \right)^{n-1} e^{\frac{\bar{\eta}^2 -\eta^2}{4}} > \frac{7}{8} \cdot \frac{6}{7} \ \ \  \forall \eta \in \left[\bar{\eta}, \bar{\eta} + \min \left \{ \left( \frac{8}{7} \right)^{\frac{1}{n-1}} \bar{\eta}, \sqrt{\bar{\eta}^2 -4 \log \left(\frac{6}{7} \right)} \right\}\right]
\end{equation*}
and
\begin{equation} \label{p36f} \left| \frac{1}{\eta^{n-1} e^{\frac{\eta^2}{4}}} \int_{\bar{\eta}}^{\eta} H(w(s)) s^{n-1} e^{\frac{s^2}{4}} ds \right| < \frac{\beta}{4} \ \ \  \forall \eta \in \left[ \bar{\eta}, \bar{\eta} -\frac{\beta}{4m_{H}} \right] ,
\end{equation} 
it follows from \eqref{p36d}-\eqref{p36f} that
\begin{equation} \label{p36g} w'(\eta) > \frac{\beta}{2} \ \ \  \forall \eta \in [\bar{\eta}, \bar{\eta} + \eta_{\beta}] \end{equation}
with $\eta_{\beta}$ given by \eqref{p36a}. 
An integration of \eqref{p36g} then gives 
\begin{equation} \label{p36h} w(\eta) > \frac{\beta}{2}(\eta-\bar{\eta}) \ \ \  \forall \eta \in \left[\bar{\eta}, \bar{\eta} + \eta_{\beta} \right]. \end{equation}
Since $(w(\eta), w'(\eta)) \in \Omega_{c^*(p)}$ for all $\eta \in [\bar{\eta}, \infty)$, it follows from \eqref{p36g}, \eqref{p36h} and \eqref{s2g} that 
\begin{equation*} \label{p36i} w''(\eta)<0 \ \ \  \forall \eta \in [\bar{\eta}, \bar{\eta} + \eta_{\beta}] ,\end{equation*}
and hence,
\begin{equation} \label{p36j} w'(\eta) \in \left[\frac{\beta}{2}, \beta \right] \ \ \  \forall [\bar{\eta}, \bar{\eta} + \eta_{\beta}]. \end{equation}
The result follows from \eqref{p36h} and \eqref{p36j}, as required.
\end{proof}
Note that an analogous bounds to those in Proposition \ref{p36} hold for $(0,\beta) \in \Omega_{c^*(p)}$ with $\beta <0$.
Additionally, note that the {\emph{a priori}} bounds in Proposition \ref{p36} and symmetry in (P) allow us to establish the following uniqueness result for (P). 
The proof is based on the uniqueness argument originating in \cite{ag} and a local uniqueness result in \cite{meyer}. 

\begin{prop} \label{p37}
For $0 \leq \alpha \leq (1-p)^{1/(1-p)}$, (P) has a unique solution on $[0,\eta^*]$ for any $\eta^*>0$. 
\end{prop}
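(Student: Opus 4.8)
The plan is to prove uniqueness by a continuation argument built around the ``last point of agreement'' of two solutions, localising the question to an arbitrary point of the common trajectory and then splitting according to whether that point lies off or on the singular plane $w=0$. The two boundary values of $\alpha$ are disposed of immediately: for $\alpha=0$ the claim is exactly Remark \ref{r35}, while for $\alpha=(1-p)^{1/(1-p)}$ the constant $\tilde w\equiv(1-p)^{1/(1-p)}$ is an equilibrium, and since Theorem \ref{t31} furnishes a unique local solution on $[0,\epsilon]$ (its contraction being set up on the set $\{\alpha/2\le u\le 3\alpha/2\}$, bounded away from $w=0$), the classical Picard--Lindel\"of uniqueness for \eqref{s2f}--\eqref{s2h} at points with $\eta>0$ and $w\neq0$ propagates this equality across all of $[0,\eta^*]$. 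I would therefore concentrate on $0<\alpha<(1-p)^{1/(1-p)}$.

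Given two solutions $w_1,w_2$ to (P) on $[0,\eta^*]$, set $\eta_0=\sup\{\eta\in[0,\eta^*]:w_1\equiv w_2\text{ on }[0,\eta]\}$. By Theorem \ref{t31} we have $\eta_0\ge\epsilon>0$, and by continuity $w_1(\eta_0)=w_2(\eta_0)$ and $w_1'(\eta_0)=w_2'(\eta_0)$; write $(a,b)$ for this common value. Assume for contradiction that $\eta_0<\eta^*$. If $a\neq0$, then near $(a,b)$ the field $Q$ from \eqref{s3a} is locally Lipschitz in $(w,w')$ (as $\eta_0>0$ and $a\neq0$), so the standard uniqueness theorem for the system \eqref{s2f}--\eqref{s2h} yields $w_1\equiv w_2$ on a neighbourhood of $\eta_0$, contradicting maximality. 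If $a=b=0$, then $F(\eta_0)=V(0,0)=0$ by \eqref{s3b}; since $V\ge0$ on $\overline{\Omega_{c^*(p)}}$ with equality only at the origin, $F$ is non-increasing by \eqref{s3j}, and $(w_i(\eta),w_i'(\eta))\in\Omega_c\subset\Omega_{c^*(p)}$ for all $\eta$ by Theorem \ref{c34}, it follows that $0\le F(\eta)\le F(\eta_0)=0$ for $\eta\ge\eta_0$, hence $w_1\equiv w_2\equiv0$ on $[\eta_0,\eta^*]$, again contradicting maximality.

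The remaining case $a=0$, $b\neq0$ (with $b>0$, say, by the symmetry of (P)) is the crux. Here $(0,b)\in\Omega_c\subset\Omega_{c^*(p)}$ since $V(0,b)=\tfrac12 b^2<c$ by Lemma \ref{l32}, so Proposition \ref{p36} applies to each $w_i$ with $\bar\eta=\eta_0$ and $\beta=b$, giving $w_i(\eta)\ge\tfrac{b}{2}(\eta-\eta_0)>0$ on $(\eta_0,\eta_0+\eta_{b}]$, with $\eta_{b}$ as in \eqref{p36a}. Subtracting the integrated form \eqref{l23e} for $w_1$ and $w_2$, whose data at $\eta_0$ coincide, and using that the weight $s^{n-1}e^{s^2/4}/(\eta^{n-1}e^{\eta^2/4})\le1$ for $\eta_0\le s\le\eta$, I would obtain
\[
|w_1'(\eta)-w_2'(\eta)|\le\int_{\eta_0}^{\eta}|H(w_1(s))-H(w_2(s))|\,ds\le\int_{\eta_0}^{\eta}L(s)\,|w_1(s)-w_2(s)|\,ds,
\]
where, by the explicit form of $H$ in \eqref{s2l} and the lower bound from Proposition \ref{p36}, the relevant Lipschitz constant obeys $L(s)\le\tfrac{1}{1-p}+p\big(\tfrac{b}{2}(s-\eta_0)\big)^{p-1}$. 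The decisive point is that $0<p<1$ makes this singularity integrable, $\int_{\eta_0}^{\eta_0+\delta}L(s)\,ds=\tfrac{\delta}{1-p}+(\tfrac{b}{2})^{p-1}\delta^{p}<\infty$. Combining the last display with $|w_1(\eta)-w_2(\eta)|\le\int_{\eta_0}^{\eta}|w_1'-w_2'|$ and writing $\psi(\eta)=\sup_{[\eta_0,\eta]}|w_1-w_2|$ gives $\psi(\eta)\le\big(\int_{\eta_0}^{\eta_0+\delta}L\big)\int_{\eta_0}^{\eta}\psi(s)\,ds$, so Gronwall's inequality forces $\psi\equiv0$ on $[\eta_0,\eta_0+\delta]$, contradicting the maximality of $\eta_0$. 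Hence $\eta_0=\eta^*$ and $w_1\equiv w_2$.

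The one genuine obstacle is this last case, where the classical Lipschitz theory breaks down at $w=0$. The whole argument turns on converting the transversal-crossing estimate of Proposition \ref{p36} into an \emph{integrable} bound on the otherwise-divergent Lipschitz constant of $H$, so that an Osgood/Gronwall argument still closes; this is precisely the adaptation of the uniqueness arguments of \cite{ag} and \cite{meyer} alluded to before the statement.
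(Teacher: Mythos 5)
Your proof is correct, and its skeleton matches the paper's: $\alpha=0$ via Remark \ref{r35}, $\alpha=(1-p)^{1/(1-p)}$ via Theorem \ref{t31} plus Lipschitz continuation off the plane $w=0$, and for interior $\alpha$ a last-point-of-agreement argument whose only non-trivial case is a transversal crossing $(w,w')(\eta_0)=(0,b)$, $b\neq0$, attacked with the bounds of Proposition \ref{p36}. Where you genuinely differ is in how that case is closed. The paper first uses the H\"older-type bound \eqref{p37g} to obtain a nonlinear Osgood-type inequality $v(\eta)\le\int_{\bar{\eta}}^{\eta}K(v(s))^{p}\,ds$ for $v=|w_1-w_2|+|w_1'-w_2'|$, integrates the comparison inequality $J'\le KJ^{p}$ to get $v\le\delta$ on an interval of length $\eta_\delta\sim\delta^{1-p}$, and only then applies the mean-value estimate with the integrable weight $p\bigl(\tfrac{\beta}{2}(s-\bar{\eta})\bigr)^{p-1}$ and Gronwall's lemma on $[\bar{\eta}+\eta_\delta,\bar{\eta}+\eta_\beta]$, where the coefficient is bounded, finishing by letting $\delta\to0$ (the exponential factor being uniform in $\delta$). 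You instead apply the mean-value estimate on the whole interval at once, and two devices make this legitimate and shorter: differencing the weighted identity \eqref{l23e} rather than \eqref{s2g} eliminates the damping term entirely (the weight ratio is at most one), and substituting the $w'$-inequality into the $w$-inequality and passing to $\psi(\eta)=\sup_{[\eta_0,\eta]}|w_1-w_2|$ integrates the singular coefficient $L$ once \emph{before} Gronwall is invoked, so the final inequality $\psi(\eta)\le\Lambda(\delta)\int_{\eta_0}^{\eta}\psi(s)\,ds$ has a constant coefficient and zero inhomogeneous term, forcing $\psi\equiv0$ outright. Thus no Osgood step, no singular-coefficient Gronwall, and no $\delta\to0$ limit are needed; the non-Lipschitz character of $H$ enters only through the integrability of $(s-\eta_0)^{p-1}$, exactly as you emphasise. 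What the paper's longer route buys is that Gronwall is only ever used in the standard bounded-coefficient form it cites; what yours buys is economy and a cleaner identification of where $0<p<1$ is actually needed.
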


\begin{proof}
If $\alpha =0$, then uniqueness of the solution to (P) on $[0,\eta^*]$ follows from Remark \ref{r35}.
Now consider $0 < \alpha \leq (1-p)^{1/(1-p)}$. 
Since $Q$, in \eqref{s3a}, is Lipschitz continuous on $(\field{R}^2  \setminus \mathcal{N}) \times (0,\infty)$, for any neighbourhood $\mathcal{N}$ of the plane $w=0$, it follows from Theorem \ref{t31} and \cite[Chapter 1, Theorem 2.3]{codd}, that the solution to (P) is unique on $[0, \tilde{\eta}]$ for any $\tilde{\eta}>0$ such that 
\begin{equation*} \label{p37a} 
w(\eta)>0 \ \ \ \forall \eta \in [0,\tilde{\eta}]. 
\end{equation*}
Hence if $\alpha = (1-p)^{1/(1-p)}$, then the equilibrium solution to (P) given by $w\equiv (1-p)^{1/(1-p)}$ is unique on $[0,\eta^*]$.
We now consider $0 < \alpha < (1-p)^{1/(1-p)}$.  
Recall that any non-constant solution to (P) must be two signed. 
Suppose that there exists two distinct solutions to (P), denoted by $w_{i}:[0,\infty) \to \field{R}$ ($i=1,2$), for which
\begin{equation} \label{p37b} 
w_{1}(\eta) = w_{2}(\eta) \ \ \  \forall \eta \in [0,\bar{\eta}] 
\end{equation}
with $0<\bar{\eta}< \eta^*$ and for all $\epsilon>0$,
\begin{equation} \label{p37c} 
w_{1}(\eta) \neq w_{2}(\eta)  
\end{equation}
for some $\eta \in (\bar{\eta}, \bar{\eta} + \epsilon]$. 
From \cite[Chapter 1, Theorem 2.3]{codd} and Remark \ref{r35} it follows that for $i=1,2$ 
\begin{equation} \label{p37d} 
w_{i}(\bar{\eta}) =0, \ \ \  w_{i}'(\bar{\eta}) \neq 0. 
\end{equation}
Thus, there exists $\beta \in \field{R} \setminus \{0\}$ such that $(w_{i}(\bar{\eta}), w_{i}'(\bar{\eta})) = (0,\beta) \in \Omega_{c^*(p)}$. 
Without loss of generality (due to symmetry), we suppose that $\beta >0$. 
Let $\eta_{\beta}$ be given by \eqref{p36a}; so that it follows from Proposition \ref{p36} and Theorem \ref{c34} that 
\begin{equation} \label{p37e} 
\frac{\beta}{2}(\eta -\bar{\eta}) \leq w_{i}(\eta) \leq (1-p)^{1/(1-p)}, \ \ \  \frac{\beta}{2} \leq w_{i}'(\eta) \leq \beta \ \ \  \forall \eta \in [\bar{\eta}, \bar{\eta} + \eta_{\beta}]. 
\end{equation}
It follows immediately from \eqref{p37e} that 
\begin{equation} \label{p37f} 
|w_{1}(\eta)-w_{2}(\eta)|\leq(1-p)^{1/(1-p)}, \quad |w_{1}'(\eta)-w_{2}'(\eta)|\leq \beta \ \ \  \forall \eta \in [\bar{\eta}, \bar{\eta}+ \eta_{\beta}]. 
\end{equation}
Note that for $(W,W') \in [0,(1-p)^{1/(1-p)}]\times[0,\beta]$, then 
\begin{equation} \label{p37g} 
W+W^p+W' < (2+\beta^{1-p})(W+W')^p, 
\end{equation}
since $0<p<1$. 
Now via \eqref{s2f} and \eqref{s2g} respectively, we have,
\begin{equation} \label{p37h} 
|w_{1}(\eta)-w_{2}(\eta)|\leq \int_{\bar{\eta}}^{\eta} |w_{1}'(s)-w_{2}'(s)| ds, 
\end{equation}
\begin{equation} \label{p37i} 
|w_{1}'(\eta)-w_{2}'(\eta)|\leq \int_{\bar{\eta}}^{\eta} \frac{1}{(1-p)}|w_{1}(s)-w_{2}(s)| + |w_{1}(s)-w_{2}(s)|^p + \left(\frac{(n-1)}{s} + \frac{s}{2} \right)|w_{1}'(s) - w_{2}'(s)| ds 
\end{equation}
for all $\eta \in [\bar{\eta}, \bar{\eta}+\eta_{\beta}]$. 
We next introduce $v:[\bar{\eta}, \bar{\eta}+\eta_{\beta}] \to \field{R}$, given by
\begin{equation} \label{p37j} 
v(\eta)=|w_{1}(\eta)-w_{2}(\eta)| + |w_{1}'(\eta)-w_{2}'(\eta)| \ \ \  \forall \eta \in [\bar{\eta}, \bar{\eta}+\eta_{\beta}]. 
\end{equation}
Therefore, via \eqref{p37f}-\eqref{p37j}, it follows that
\begin{align}
\nonumber v(\eta) \leq \int_{\bar{\eta}}^{\eta} &  \frac{1}{(1-p)}|w_{1}(s)-w_{2}(s)| + |w_{1}(s)-w_{2}(s)|^p + \left(\frac{(n-1)}{s} + \frac{s}{2} +1 \right)|w_{1}'(s) - w_{2}'(s)|  ds \\
\nonumber < \int_{\bar{\eta}}^{\eta} & \frac{1}{(1-p)} \left ( \frac{(n-1)}{\bar{\eta}} + \frac{(\bar{\eta}+\eta_{\beta})}{2}+1 \right) (  |w_{1}(s)-w_{2}(s)| + |w_{1}(s)-w_{2}(s)|^p \\
\nonumber & +|w_{1}'(s)-w_{2}'(s)|) ds \\
\label{p37k} \leq \int_{\bar{\eta}}^{\eta} & \frac{1}{(1-p)} \left ( \frac{(n-1)}{\bar{\eta}} + \frac{(\bar{\eta}+\eta_{\beta})}{2}+1 \right) (2+\beta^{1-p} )(v(s))^p ds
\end{align}
for all $\eta \in [\bar{\eta}, \bar{\eta}+\eta_{\beta}]$ with the final inequality due to \eqref{p37f} and \eqref{p37g}. 
Also, via Proposition \ref{p36} and \eqref{p36a}, $\eta_{\beta}$ is dependent on $p, n, \bar{\eta}$ and $\beta$ only, and hence, it follows from \eqref{p37k} that
\begin{equation} \label{p37l} 
v(\eta) \leq \int_{\bar{\eta}}^{\eta} K(p, n, \bar{\eta}, \beta)(v(s))^p ds 
\end{equation}
for all $\eta \in [\bar{\eta}, \bar{\eta}+\eta_{\beta}]$, with constant $K(p, n, \bar{\eta},\beta)$ given by,
\begin{equation} \label{p37m} K(p, n, \bar{\eta}, \beta)= \frac{1}{(1-p)} \left ( \frac{(n-1)}{\bar{\eta}} + \frac{(\bar{\eta}+\eta_{\beta})}{2}+1 \right) (2+\beta^{1-p} ) > 0 . 
\end{equation}
Now, we introduce the function $J :[\bar{\eta}, \bar{\eta}+\eta_{\beta}] \to [0,\infty)$ given by 
\begin{equation} \label{p37n} 
J(\eta) = \int_{\bar{\eta}}^{\eta} K(p, n, \bar{\eta}, \beta)(v(s))^p ds \ \ \ \forall \eta \in [\bar{\eta}, \bar{\eta}+\eta_{\beta}]. 
\end{equation}
It follows from \eqref{p37m} and \eqref{p37n} that $J$ is non-negative, non-decreasing and differentiable on $[\bar{\eta}, \bar{\eta}+\eta_{\beta}]$, and via \eqref{p37l}, satisfies
\begin{equation} \label{p37o} 
(J(s))' \leq K(p, n, \bar{\eta}, \beta)(J(s))^p \ \ \  \forall s \in [\bar{\eta}, \bar{\eta}+\eta_{\beta}]. 
\end{equation}
Upon integrating \eqref{p37o} from $\bar{\eta}$ to $\eta$, we obtain
\begin{equation} \label{p37p} 
J(\eta) \leq ((1-p)K(p, n, \bar{\eta}, \beta)(\eta-\bar{\eta}))^{1/(1-p)} \ \ \  \forall \eta \in [\bar{\eta}, \bar{\eta}+\eta_{\beta}]. 
\end{equation}
Therefore, via \eqref{p37p}, \eqref{p37n} and \eqref{p37l} we have
\begin{equation} \label{p37q} 
v(\eta) \leq \delta \ \ \  \forall \eta \in [\bar{\eta}, \bar{\eta}+\eta_{\delta}] 
\end{equation}
with $\delta >0$ chosen sufficiently small so that 
\begin{equation} \label{p37r} 
\eta_{\delta} = \frac{\delta^{1-p}}{(1-p)K(p, n, \bar{\eta}, \beta)}  <\eta_{\beta}. 
\end{equation}
Now, from Proposition \ref{p36}, we have 
\begin{equation} \label{p37s} 
\min\{w_{1}(\eta), w_{2}(\eta) \} \geq \frac{\beta}{2}(\eta - \bar{\eta}) \ \ \ \forall \eta \in [\bar{\eta}, \bar{\eta}+\eta_{\beta}]. 
\end{equation}
Moreover, it follows from \eqref{s3a}, \eqref{p37s} and the mean value theorem, that there exists a function $\theta : ( \bar{\eta}, \bar{\eta} + \eta_{\beta}] \to \field{R} $ such that $\theta(s) \geq \min\{ w_{1}(s), w_{2}(s)\}$ on $(\bar\eta , \bar\eta + \eta_\beta]$, and for which 
\begin{align}
\nonumber |Q_{2}(w_{1}(s), & w_{1}'(s), s)-Q_{2}(w_{2}(s), w_{2}'(s), s)| \\
\nonumber \leq & \frac{1}{(1-p)}|w_{1}(s)-w_{2}(s)| + |w_{1}(s)^p-w_{2}(s)^p| + \left(\frac{(n-1)}{s} + \frac{s}{2}  \right)|w_{1}'(s) - w_{2}'(s)| \\
\nonumber \leq & \frac{1}{(1-p)}|w_{1}(s)-w_{2}(s)| + p(\theta(s))^{p-1} |w_{1}(s)-w_{2}(s)| + \left(\frac{n}{\bar{\eta}} + \frac{(\bar{\eta}+\eta_{\delta})}{2}  \right)|w_{1}'(s) - w_{2}'(s)| \\
\nonumber \leq & \left (\frac{1}{(1-p)} + p \left( \frac{\beta}{2} (s-\bar{\eta}) \right)^{p-1} \right) |w_{1}(s)-w_{2}(s)| + \left(\frac{n}{\bar{\eta}} + \frac{(\bar{\eta}+\eta_{\delta})}{2}  \right)|w_{1}'(s) - w_{2}'(s)| \\
\label{p37t} \leq & \left (\frac{1}{(1-p)} + p \left( \frac{\beta}{2} (s-\bar{\eta}) \right)^{p-1} + \frac{n}{\bar{\eta}} + \frac{(\bar{\eta}+\eta_{\delta})}{2} \right) v(s)
\end{align}
for each $s \in (\bar{\eta}, \bar{\eta}+\eta_{\beta}]$. 
Now, via \eqref{s2f}, \eqref{s2g}, \eqref{s3a}, \eqref{p37l}, \eqref{p37r}, \eqref{p37t} and \eqref{p37q}, we have, 
\begin{align}
\nonumber v(\eta)  \leq & \int_{\bar{\eta}}^{\eta}  | Q_{1}(w_{1}(s),w_{1}'(s),s) - Q_{1}(w_{2}(s), w_{2}'(s), s)| \\
\nonumber & + | Q_{2}(w_{1}(s),w_{1}'(s),s) - Q_{2}(w_{2}(s), w_{2}'(s), s) | ds \\
\nonumber \leq &\int_{\bar{\eta}}^{\bar{\eta} + \eta_{\delta}} K(p, n, \bar{\eta}, \beta) (v(s))^p ds \\
\nonumber & + \int_{\bar{\eta} +\eta_{\delta}}^{\eta} \left ( 1 + \frac{1}{(1-p)} + p \left(  \frac{\beta}{2} (s-\bar{\eta}) \right)^{p-1} + \frac{n}{\bar{\eta}} + \frac{(\bar{\eta}+\eta_{\beta})}{2} \right) v(s) ds \\
\label{p37u} \leq &\frac{\delta}{(1-p)} + \int_{\bar{\eta} +\eta_{\delta}}^{\eta} \left ( 1 + \frac{1}{(1-p)} + p \left(  \frac{\beta}{2} (s-\bar{\eta}) \right)^{p-1} + \frac{n}{\bar{\eta}} + \frac{(\bar{\eta}+\eta_{\beta})}{2} \right) v(s) ds
\end{align}
for all $\eta \in [\bar{\eta} + \eta_{\delta}, \bar{\eta} + \eta_{\beta}]$. 
An application of Gronwall's Lemma \cite[Corollary 6.2]{aman} to \eqref{p37u}, gives
\begin{align}
\nonumber v(\eta)  \leq & \frac{\delta}{(1-p)} \exp{ \biggl(  \int_{\bar{\eta} + \eta_{\delta}}^{\eta} \biggl ( 1 + \frac{1}{(1-p)} + p \biggl(  \frac{\beta}{2} (s-\bar{\eta}) \biggr)^{p-1} + \frac{n}{\bar{\eta}} + \frac{(\bar{\eta}+\eta_{\beta})}{2} \biggr) ds \biggr) } \\
\nonumber = & \frac{\delta}{(1-p)} \exp{ \biggl( (\eta - (  \bar{\eta} + \eta_{\delta})) \biggl( 1 + \frac{1}{(1-p)} +  \frac{n}{\bar{\eta}} + \frac{(\bar{\eta}+\eta_{\beta})}{2} \biggr) + 
    \biggl(  \frac{\beta}{2} \biggr)^{p-1} ((\eta-\bar{\eta})^{p} - (\eta_{\delta})^p )  \biggr) } \\
\label{p37v} \leq & \frac{\delta}{(1-p)} \exp{ \biggl( \eta_{\beta} \biggl ( 1+ \frac{1}{(1-p)} + \frac{n}{\bar{\eta}} + \frac{(\bar{\eta}+ \eta_{\beta})}{2} \biggr) + \biggl( \frac{\beta}{2} \biggr)^{p-1} \eta_{\beta}^{p} \biggr) }
\end{align}
for all $\eta \in [\bar{\eta} + \eta_{\delta}, \bar{\eta} + \eta_{\beta}]$. 
Since $v$ is non-negative, it follows from \eqref{p37v} and \eqref{p37q}, upon letting $\delta \to 0$, that 
\begin{equation} \label{p37w} 
v(\eta)=0 \quad \forall \eta \in [\bar{\eta}, \bar{\eta} + \eta_{\beta}]. 
\end{equation}
Moreover, it follows from \eqref{p37w} and \eqref{p37j} that 
\begin{equation*} \label{p37x} 
w_{1}(\eta ) = w_{2}(\eta )  \quad \forall \eta \in [\bar{\eta}, \bar{\eta} + \eta_{\beta}], 
\end{equation*}
which contradicts the definition of $\bar{\eta}$ in \eqref{p37b}-\eqref{p37c}. 
Thus, the solution $w_1:[0,\infty ) \to \mathbb{R}$ to (P) with $0< \alpha < (1-p)^{1/(1-p)}$ is unique on $[0,\eta^*]$ for any $\eta^*>0$, as required.
\end{proof}

\subsection{Continuous Dependence}

In this subsection we establish continuous dependence of solutions $w:[0,\infty )\to \mathbb{R}$ to (P) with respect to initial data $\alpha \in [0, (1-p)^{1/(1-p)})$. 
To proceed we establish that all such solutions to (P) satisfy $(w,w') \to (0,0)$ as $\eta \to \infty$. 
The uniqueness result in Proposition \ref{p37} then yields a local continuous dependence result (on arbitrarily large intervals), and finally, limiting behaviour of solutions to (P) as $\eta\to\infty$ allows continuous dependence to be established on $[0,\infty )$. 
To begin, we have

\begin{lem} \label{l38}
Let $w:[0,\infty) \to \mathbb{R}$ be the solution to (P) with $0<\alpha<(1-p)^{1/(1-p)}$. 
Then, for some $\eta_\alpha>0$, 
\begin{equation*} \label{l38a} 
|w'(\eta)| \leq \frac{4M_H}{\eta} \ \ \ \forall \eta \in [\eta_\alpha , \infty ) 
\end{equation*}
\end{lem}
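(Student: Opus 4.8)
The plan is to read off from the integral representation \eqref{l23c} that $w'$ is controlled by a purely deterministic kernel quotient, and then reduce the whole estimate to the large-$\eta$ asymptotics of that quotient. First I would record the uniform pointwise bound on $H(w(s))$. By Theorem \ref{c34} the solution satisfies $(w(\eta),w'(\eta))\in\Omega_c$ for all $\eta\in(0,\infty)$ with $c=V(\alpha,0)\le c^*(p)$, so that $(w(\eta),w'(\eta))$ stays in $\overline{\Omega_{c^*(p)}}$, the closed region bounded by $\mathcal H$; since this region is contained in the strip $|w|\le (1-p)^{1/(1-p)}$, we get $|w(\eta)|\le (1-p)^{1/(1-p)}$ for all $\eta$. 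Because $H$ given by \eqref{s2l} is odd, $|H|$ is even, so $\sup_{[-(1-p)^{1/(1-p)},(1-p)^{1/(1-p)}]}|H|=M_H$ and hence $|H(w(s))|\le M_H$ for every $s$. Substituting this into \eqref{l23c} and taking absolute values yields
\[ |w'(\eta)|\le M_H\,G(\eta),\qquad G(\eta):=\frac{1}{\eta^{n-1}e^{\eta^2/4}}\int_0^\eta s^{n-1}e^{s^2/4}\,ds,\quad \eta\in(0,\infty). \]
Thus the lemma reduces to the $\alpha$-independent claim that $G(\eta)\le 4/\eta$ for all sufficiently large $\eta$.

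Next I would compute $\lim_{\eta\to\infty}\eta\,G(\eta)$. Writing $\eta\,G(\eta)=\phi(\eta)/\big(\eta^{n-2}e^{\eta^2/4}\big)$ with $\phi(\eta):=\int_0^\eta s^{n-1}e^{s^2/4}\,ds$, both numerator and denominator tend to $+\infty$ as $\eta\to\infty$ for every $n\in\mathbb N$ (the exponential dominates the power), so this is an $\infty/\infty$ form and L'Hôpital applies. Using $\phi'(\eta)=\eta^{n-1}e^{\eta^2/4}$ and $\tfrac{d}{d\eta}\big(\eta^{n-2}e^{\eta^2/4}\big)=\eta^{n-3}e^{\eta^2/4}\big((n-2)+\tfrac{\eta^2}{2}\big)$, the ratio of derivatives is $\eta^2/\big((n-2)+\tfrac{\eta^2}{2}\big)\to 2$. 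Hence $\lim_{\eta\to\infty}\eta\,G(\eta)=2$. (Equivalently, one integration by parts, $\int_0^\eta s^{n-1}e^{s^2/4}\,ds=2\eta^{n-2}e^{\eta^2/4}-2(n-2)\int_0^\eta s^{n-3}e^{s^2/4}\,ds+\text{boundary}$, exhibits the leading term $2\eta^{n-2}e^{\eta^2/4}$ directly.)

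Finally, since the limiting value $2$ is strictly below $4$, there exists $\eta_\alpha>0$ with $\eta\,G(\eta)\le 4$, i.e. $G(\eta)\le 4/\eta$, for all $\eta\ge\eta_\alpha$; combined with $|w'(\eta)|\le M_H\,G(\eta)$ this gives $|w'(\eta)|\le 4M_H/\eta$ on $[\eta_\alpha,\infty)$, as required. The only substantive step is the kernel asymptotics; the main thing to be careful about is verifying the L'Hôpital hypotheses uniformly in $n\in\mathbb N$ (in particular the borderline cases $n=1,2$, where the lower-order term $(n-2)$ is harmless and, for $n=2$, $\phi$ is even computable in closed form). The comfortable gap between the true asymptotic constant $2$ and the stated constant $4$ means no sharp estimates are needed, so the argument does not require delicate control of the remainder.
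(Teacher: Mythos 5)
Your proof is correct, and its skeleton is the same as the paper's: both arguments bound $|H(w(s))|$ by $M_H$ (your observation that $H$ is odd, so the supremum of $|H|$ over the symmetric interval $[-(1-p)^{1/(1-p)},(1-p)^{1/(1-p)}]$ equals $M_H$ from \eqref{s2m}, is a detail the paper leaves implicit when it cites \eqref{l23c} and \eqref{s2m}), pull $M_H$ out of the integral representation, and thereby reduce the lemma to the $\alpha$-independent claim that the kernel ratio $G(\eta)=\eta^{1-n}e^{-\eta^2/4}\int_0^\eta s^{n-1}e^{s^2/4}\,ds$ satisfies $\eta\,G(\eta)\to 2<4$. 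The only genuine difference is how that limit is computed: the paper substitutes $s=\eta\sqrt{1-q}$ and invokes Watson's Lemma to obtain $\int_0^\eta s^{n-1}e^{s^2/4}\,ds \sim 2\eta^{n-2}e^{\eta^2/4}$, whereas you apply L'H\^{o}pital's rule to the $\infty/\infty$ quotient. Your route is more elementary and entirely adequate, since only the leading constant is needed and the comfortable margin between $2$ and $4$ absorbs all error terms; Watson's Lemma would supply a full asymptotic expansion, but the paper uses nothing beyond the first term. Two small remarks: the dimension $n$ is fixed throughout the problem, so no uniformity in $n$ is actually required of the L'H\^{o}pital step (it holds for each fixed $n\in\mathbb{N}$, including $n=1,2$); and your parenthetical integration-by-parts identity is not literally valid for $n=1$ (there both the boundary term $2s^{n-2}e^{s^2/4}$ at $s=0$ and the remaining integral $\int_0^\eta s^{n-3}e^{s^2/4}\,ds$ diverge and must be interpreted as cancelling limits), but since that aside plays no role in your main argument, the proof stands.
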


\begin{proof}
Via \eqref{l23c} and \eqref{s2m},
\begin{equation} \label{l38e} 
|w'(\eta)| \leq M_{H}\frac{1}{\eta^{n-1}e^{\frac{1}{4}\eta^2}} \int_{0}^{\eta} s^{n-1}e^{\frac{1}{4}s^2} ds \ \ \ \forall \eta \in [0,\infty). 
\end{equation}
Via an application of Watson's Lemma \cite[Proposition 2.1]{Miller} we see that
\begin{equation} \label{l38f}
\int_{0}^{\eta} e^{\frac{1}{4}s^2} s^{n-1} ds 
= \frac{e^{\frac{1}{4}\eta^2 } \eta^n}{2} \int_{0}^{1} e^{-\frac{1}{4}\eta^2 q} (1-q)^{\frac{n}{2}-1} dq 
\sim \frac{e^{\frac{1}{4}\eta^2}\eta^n}{2} \left(\frac{4}{\eta^2}\right) \text{ as } \eta \to \infty.  
\end{equation}
Substituting \eqref{l38f} into \eqref{l38e} establishes that for sufficiently large $\eta_\alpha >0$, $w'$ satisfies
\begin{equation*} \label{l38h} 
|w'(\eta)|<\frac{4M_{H}}{\eta} \ \ \ \forall \eta \in [\eta_\alpha , \infty ), 
\end{equation*}
as required. 
\end{proof}

Additionally, we have,
\begin{lem} \label{l39}
Let $w:[0,\infty) \to \mathbb{R}$ be the solution to (P) with $0<\alpha<(1-p)^{1/(1-p)}$. 
Then, $F:[0,\infty) \to \field{R}$, as given by \eqref{s3i}, converges to $F_{\infty} \in [0,F(0))$ as $\eta \to \infty$.
\end{lem}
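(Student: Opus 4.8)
The plan is to show that $F$ is monotone decreasing and bounded below, so that it converges to some limit $F_\infty$, and then to rule out the two boundary values $F(0)$ and any negative number. First I would recall from \eqref{s3j} that
\begin{equation*}
F'(\eta) = -\left(\frac{(n-1)}{\eta}+\frac{\eta}{2}\right)(w'(\eta))^2 \leq 0 \quad \forall \eta \in (0,\infty),
\end{equation*}
since $n\geq 1$ and the bracket is strictly positive on $(0,\infty)$. Thus $F$ is non-increasing on $[0,\infty)$, and in fact strictly decreasing wherever $w'\neq 0$. Next, by Theorem \ref{c34}, the solution trajectory $(w(\eta),w'(\eta))$ remains in $\Omega_c\subset\Omega_{c^*(p)}$ for all $\eta\in(0,\infty)$, where $c=V(\alpha,0)<c^*(p)$. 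Since $V\geq 0$ on $\overline{\mathcal{H}}$ (with minimum $0$ attained only at the origin, as established in the discussion surrounding Figure \ref{fig:1}), it follows that $F(\eta)=V(w(\eta),w'(\eta))\geq 0$ for all $\eta$. A monotone non-increasing function bounded below converges, so $F(\eta)\to F_\infty$ for some $F_\infty\geq 0$ as $\eta\to\infty$.

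It then remains to prove the strict inequality $F_\infty < F(0)$, i.e.\ that $F$ actually decreases strictly from its initial value. For this I would use Lemma \ref{l32} (with $\epsilon_1=0$): the argument there shows $w''(0)<0$ and $F'<0$ almost everywhere near $\eta=0$, giving $F(\eta)<F(0)$ for all $\eta\in(0,\infty)$; in particular $F(\eta_0)<F(0)$ for any fixed $\eta_0>0$. Since $F$ is non-increasing, $F_\infty = \lim_{\eta\to\infty}F(\eta)\leq F(\eta_0)<F(0)$, which gives the desired strict bound $F_\infty\in[0,F(0))$.

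The only genuine subtlety, and the step I expect to require the most care, is confirming that $F_\infty\geq 0$ rather than merely being finite. This rests on the geometric fact that $V\geq 0$ throughout the region $\mathcal{H}$ enclosed by the level curve $V=c^*(p)$ through $e_\pm$; this is asserted in the text (``$V$ has a minimum at $(0,0)$ and is increasing on moving radially away'') and is exactly what confines the trajectory by Theorem \ref{c34}. I would make this precise by noting that $\Omega_c\subseteq\Omega_{c^*(p)}\subseteq\mathcal{H}$ and that $V$ attains its infimum $0$ over $\overline{\mathcal{H}}$ only at the origin, so $V\geq 0$ on the closure of the trajectory's range. Everything else — monotonicity, boundedness, and the strict upper bound — follows directly from \eqref{s3j}, Theorem \ref{c34}, and Lemma \ref{l32} without further computation.
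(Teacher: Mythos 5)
Your proposal is correct and follows essentially the same route as the paper: monotonicity of $F$ from \eqref{s3j}, non-negativity from the confinement of the trajectory in $\Omega_{c}$ (Theorem \ref{c34}) where $V\geq 0$, and the strict bound $F_\infty<F(0)$ from the strict decrease guaranteed by Lemma \ref{l32} (which is exactly what underlies Theorem \ref{c34}). The paper's proof is simply a terser statement of the same three observations.
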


\begin{proof}
Theorem \ref{c34} ensures that $(w(\eta),w'(\eta)) \in \Omega_{c}$ for all $\eta \in (0,\infty)$ with $c=V(\alpha,0)=F(0)$, and so, via \eqref{s3i} and \eqref{s3j}, $F$ is continuously differentiable, non-increasing and bounded below by $0$. 
Therefore there exists $F_{\infty} \in [0, F(0))$, such that $F(\eta) \to F_{\infty}$ as $\eta \to \infty$, as required.
\end{proof}

\begin{thm} \label{t310}
Let $w:[0,\infty) \to \mathbb{R}$ be the solution to (P) with $0<\alpha<(1-p)^{1/(1-p)}$. 
Then,
\begin{equation*} \label{t310a} 
(w(\eta),w'(\eta)) \to (0,0) \ \ \ \text{ as } \eta \to \infty. 
\end{equation*}
\end{thm}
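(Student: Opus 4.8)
The plan is to combine the two facts already established: by Lemma~\ref{l38} we have $w'(\eta)\to 0$ as $\eta\to\infty$, so it remains only to prove $w(\eta)\to 0$; and by Lemma~\ref{l39} the energy $F(\eta)=V(w(\eta),w'(\eta))$ converges to some $F_\infty\in[0,F(0))$. Writing $\phi(w):=V(w,0)=-\tfrac{1}{2(1-p)}w^2+\tfrac{1}{1+p}|w|^{1+p}$, the convergence $w'(\eta)\to 0$ gives
\[
\phi(w(\eta)) = F(\eta)-\tfrac12(w'(\eta))^2 \longrightarrow F_\infty \quad\text{as }\eta\to\infty .
\]
A direct computation from \eqref{s3b} shows $\phi$ is even, $\phi(0)=0$, and $\phi'(w)=|w|^{p}\,\mathrm{sgn}(w)-\tfrac{w}{1-p}>0$ for $0<w<(1-p)^{1/(1-p)}$, so $\phi$ restricts to a strictly increasing continuous bijection of $[0,(1-p)^{1/(1-p)}]$ onto $[0,c^*(p)]$ with continuous inverse. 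By Theorem~\ref{c34} the trajectory lies in $\Omega_c$ with $c=V(\alpha,0)<c^*(p)$, so $F_\infty<c^*(p)$ and $|w(\eta)|<(1-p)^{1/(1-p)}$; since $\phi$ is even we have $\phi(|w(\eta)|)\to F_\infty$, whence $|w(\eta)|\to w_\infty:=\phi^{-1}(F_\infty)$.

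The whole theorem then reduces to showing $F_\infty=0$, equivalently $w_\infty=0$, and I would argue this by contradiction. Assume $w_\infty>0$. Then $|w(\eta)|$ is eventually bounded away from $0$, so $w$ keeps a constant sign for all large $\eta$; by the oddness of $H$ (equivalently the symmetry of (P) noted after \eqref{s2j}) we may assume $w(\eta)\to w_\infty>0$. Since $0<w_\infty<(1-p)^{1/(1-p)}$, evaluating \eqref{s2l} gives
\[
H(w_\infty)=w_\infty^{\,p}\Bigl(\tfrac{w_\infty^{1-p}}{1-p}-1\Bigr)<0 ,
\]
so there exist $\eta_0>0$ and $\mu>0$ with $H(w(s))\le -\mu$ for all $s\ge\eta_0$.

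The main obstacle, and the technical heart of the proof, is to turn this sign information on $H(w)$ into a contradiction with the boundedness of $w$. For this I would use the integral representation \eqref{l23c} of $w'$, split $\int_0^\eta=\int_0^{\eta_0}+\int_{\eta_0}^\eta$, bound the second piece above by $-\mu\int_{\eta_0}^\eta s^{n-1}e^{s^2/4}\,ds$, and then divide by $\eta^{n-1}e^{\eta^2/4}$. Invoking the Watson's Lemma asymptotics already recorded in \eqref{l38f}, namely $\int_0^\eta s^{n-1}e^{s^2/4}\,ds\sim 2\eta^{n-2}e^{\eta^2/4}$, the first piece contributes $o(1/\eta)$ while the second yields $\limsup_{\eta\to\infty}\eta\,w'(\eta)\le -2\mu<0$, so that $w'(\eta)\le -\mu/\eta$ for all large $\eta$. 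Integrating this bound forces $w(\eta)\to-\infty$, contradicting $w(\eta)\to w_\infty$. (The case $w_\infty<0$ is identical after replacing $w$ by $-w$, using that $H$ is odd.) Hence $F_\infty=0$, so $w_\infty=0$ and $w(\eta)\to 0$; together with $w'(\eta)\to 0$ from Lemma~\ref{l38}, this gives $(w(\eta),w'(\eta))\to(0,0)$, as required.
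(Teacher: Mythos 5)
Your proposal is correct and follows essentially the same route as the paper: Lemma \ref{l38} for $w'\to 0$, Lemma \ref{l39} for $F\to F_\infty$, identification of $|w(\eta)|\to w_\infty$ as the non-negative root of $V(w,0)=F_\infty$, and then a contradiction via Watson's Lemma applied to \eqref{l23c}, showing that $H(w_\infty)<0$ would force logarithmic divergence of $w$. Your splitting of the integral at $\eta_0$ and the $\limsup$ bound merely spell out in more detail the step the paper compresses into the asymptotic statements $w'(\eta)\sim 2H(w_\infty)/\eta$ and $w(\eta)\sim 2H(w_\infty)\log\eta$.
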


\begin{proof}
Recall from Theorem \ref{c34} that,
\begin{equation} \label{t310b} (w(\eta),w'(\eta)) \in \Omega_{c^*(p)} \ \ \ \forall \eta \in (0,\infty), \end{equation}
and from Lemma \ref{l38} that
\begin{equation} \label{t310c} w'(\eta) \to 0 \ \ \ \text{ as } \ \ \ \eta \to \infty. \end{equation}
In addition, via Lemma \ref{l39}, 
\begin{equation} \label{t310d} V(w(\eta),w'(\eta)) \to F_{\infty} \ \ \ \textrm{as } \ \ \ \eta \to \infty \end{equation}
for some $F_{\infty} \in [0,c^*(p))$. 
It follows from \eqref{t310b}-\eqref{t310d} that 
\begin{equation} \label{t310e} |w(\eta)| \to w_{\infty} \ \ \ \text{ as } \ \ \ \eta \to \infty \end{equation}
with $w_{\infty}$ the unique non-negative root of $V(w,0)= F_{\infty}$ for $w \in [0,(1-p)^{1/(1-p)})$.
Without loss of generality we suppose that $(w(\eta),w'(\eta)) \to (w_{\infty},0)$ as $\eta \to \infty$.
However it follows from \eqref{l23c} that 
\begin{equation} \label{t310h} 
w'(\eta ) = \frac{1}{\eta^{n-1}e^{\frac{1}{4}\eta^2}} \int_{0}^{\eta} H(w(s))s^{n-1}e^{\frac{1}{4}s^2} ds \ \ \ \forall \eta \in (0,\infty) 
\end{equation}
and $H(w_{\infty}) \leq 0$.
Using \eqref{t310e}, if $H(x_{\infty})<0$ then an application of Watson's Lemma to \eqref{t310h} implies that
\begin{equation} \label{t310k} w'(\eta) \sim \frac{2H(w_{\infty})}{\eta} \ \ \ \text{ as } \eta \to \infty. \end{equation}
In addition, from \eqref{s2f}, we have 
\begin{equation} \label{t310l} w(\eta )=\alpha + \int_{0}^{\eta} w'(s)ds \ \ \ \forall \eta \in [0,\infty), \end{equation}
which implies, via \eqref{t310k}, that
\begin{equation} \label{t310m}  w(\eta) \sim 2H(w_{\infty})\log({\eta}) \ \ \  \text{ as } \eta \to \infty, \end{equation}
which contradicts \eqref{t310e}. 
We conclude that $H(w_\infty )\not< 0$ and so we must have $H(w_{\infty})=0$. 
Since $w_{\infty} \in [0,(1-p)^{1/(1-p)})$, $H(w_\infty)=0$ requires that $w_{\infty}=0$. 
It then follows from \eqref{t310c} and \eqref{t310e} that, $(w(\eta), w'(\eta)) \to (0,0)$ as $\eta \to \infty$, as required.
\end{proof}

To establish continuous dependence for (P), we split the argument into three parts;
a local result on $[0, \eta_{1}]$ for $\eta_{1}$ small, to address the singularity in \eqref{s2g} as $\eta \to 0^+$; 
a local result on $[0, \eta_{2}]$ for $\eta_{2}$ arbitrarily large, via a `uniqueness implies continuous dependence' argument; 
and on $[\eta_2, \infty)$ via asymptotic behaviour of solutions to (P) as $\eta\to\infty$. 
Firstly, we have,

\begin{lem} \label{l311}
Let $w_1:[0,\infty) \to \mathbb{R}$ be the solution to (P) with $0<\alpha_1<(1-p)^{1/(1-p)}$. 
Then, for any $\epsilon >0$ there exists $\delta>0$ such that if $|\alpha_{1} - \alpha_{2}| < \delta$, the solution to (P) with $0<\alpha_2<(1-p)^{1/(1-p)}$, denoted by $w_2:[0,\infty) \to \field{R}$ satisfies
\begin{equation*} \label{l311a} \max\{|w_{1}(\eta) - w_{2}(\eta)| , |w_{1}'(\eta) - w_{2}'(\eta)|\} < \epsilon \ \ \ \forall \eta \in [0,\eta_{1}] \end{equation*}
with $\eta_{1} = \sqrt{ \frac{\alpha_{1}}{2 |m_{H}|}}$ for $m_H$ given by \eqref{s2m}.
\end{lem}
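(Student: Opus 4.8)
The plan is to exploit the specific choice $\eta_1=\sqrt{\alpha_1/(2|m_H|)}$, which is calibrated precisely so that both solutions stay bounded away from the plane $w=0$ throughout $[0,\eta_1]$, a region on which $H$ is $C^1$ and hence locally Lipschitz. The only genuine difficulty is then the singular coefficient $(n-1)/\eta$ in \eqref{s2g} at $\eta=0$, which prevents a direct appeal to the classical continuous-dependence theorem; I would sidestep it by working entirely with the integral formulation \eqref{l23b}, \eqref{l23c} of Lemma \ref{l23}, in which the singularity is already resolved.

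First I would establish an a priori lower bound. After restricting to $\delta<\alpha_1/4$ (so that $\alpha_2>3\alpha_1/4$), I use that $H(w_i(s))\ge m_H$ whenever $w_i(s)\in[0,(1-p)^{1/(1-p)}]$ together with the elementary estimate $\int_0^t \frac{s^{n-1}e^{s^2/4}}{t^{n-1}e^{t^2/4}}\,ds\le t$ (valid since $s\mapsto s^{n-1}e^{s^2/4}$ is increasing), so that \eqref{l23b} yields $w_i(\eta)\ge \alpha_i-|m_H|\eta^2/2$ for as long as $w_i>0$. A standard continuation argument (letting $\eta^\star$ be the largest value up to which $w_i$ stays positive and noting the bound would be violated at $\eta^\star<\eta_1$) then upgrades this to $w_1\ge 3\alpha_1/4$ and $w_2\ge \alpha_1/2$ on all of $[0,\eta_1]$, by the definition of $\eta_1$. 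Consequently both solutions take values in the compact interval $[\alpha_1/2,(1-p)^{1/(1-p)}]$, on which, exactly as in the computation giving \eqref{t31j}, $H$ is Lipschitz with constant $L:=\tfrac{1}{1-p}+p(\alpha_1/2)^{p-1}$.

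Next I subtract the integral equations \eqref{l23b} for $w_1$ and $w_2$ and set $\psi:=|w_1-w_2|$; the Lipschitz bound gives the singular Gronwall-type inequality
\[ \psi(\eta)\le|\alpha_1-\alpha_2|+L\int_0^\eta \frac{1}{t^{n-1}e^{t^2/4}}\int_0^t \psi(s)\,s^{n-1}e^{s^2/4}\,ds\,dt=:|\alpha_1-\alpha_2|+L(\mathcal I\psi)(\eta). \]
The crux is to close this inequality despite the singularity. I would do so by iterating the positive (hence order-preserving) operator $\mathcal I$: using monotonicity of $s\mapsto s^{n-1}e^{s^2/4}$, an induction shows $(\mathcal I^k 1)(\eta)\le (\eta^2/2)^k/k!$, so iterating $k$ times and discarding the remainder via boundedness of $\psi$, namely $L^k(\mathcal I^k\psi)(\eta)\le \|\psi\|_\infty (L\eta^2/2)^k/k!\to 0$, produces
\[ \psi(\eta)\le|\alpha_1-\alpha_2|\sum_{k=0}^\infty \frac{(L\eta^2/2)^k}{k!}=|\alpha_1-\alpha_2|\,e^{L\eta^2/2}\qquad\forall\,\eta\in[0,\eta_1]. \]
Feeding this into \eqref{l23c} and using $\frac{1}{\eta^{n-1}e^{\eta^2/4}}\int_0^\eta s^{n-1}e^{s^2/4}\,ds\le\eta\le\eta_1$ gives $|w_1'(\eta)-w_2'(\eta)|\le L\eta_1 e^{L\eta_1^2/2}|\alpha_1-\alpha_2|$, so choosing $\delta=\min\{\alpha_1/4,\ \epsilon\,e^{-L\eta_1^2/2}/\max\{1,L\eta_1\}\}$ delivers the claim.

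The main obstacle is the third step: because $(n-1)/\eta$ blows up at the origin, neither the classical continuous-dependence theorem nor the plain integral Gronwall inequality applies directly. The resolution is the factorial decay of the iterated kernels $(\mathcal I^k 1)$, which is exactly what makes the singular integral operator contract after finitely many iterations; establishing the inductive bound $(\mathcal I^k 1)(\eta)\le(\eta^2/2)^k/k!$ (and the monotonicity of $s\mapsto s^{n-1}e^{s^2/4}$ that underpins it) is therefore the technical heart of the argument, everything else being bookkeeping around the a priori positivity on $[0,\eta_1]$.
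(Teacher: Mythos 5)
Your proposal is correct, and its skeleton is the same as the paper's: restrict to $\delta<\alpha_1/4$, use the definition of $\eta_1$ together with $H\geq m_H$ and the kernel estimate $\int_0^t s^{n-1}e^{s^2/4}\,ds \leq t\,t^{n-1}e^{t^2/4}$ to get the lower bound $w_i\geq\alpha_1/2$ on $[0,\eta_1]$, then exploit the Lipschitz constant of $H$ on $[\alpha_1/2,(1-p)^{1/(1-p)}]$ (exactly the computation giving \eqref{t31j}) to close a Gronwall-type estimate of the form $|w_1-w_2|\leq|\alpha_1-\alpha_2|e^{C\eta_1^2}$. Where you differ is in the final step: the paper collapses the double integral to a single one via $\int_0^\eta\int_0^t f\,ds\,dt\leq\eta_1\int_0^\eta f\,ds$ and then invokes Gronwall's Lemma, whereas you iterate the singular integral operator $\mathcal{I}$ and sum the Neumann series using the factorial bound $(\mathcal{I}^k1)(\eta)\leq(\eta^2/2)^k/k!$; both devices rest on the same monotonicity of $s\mapsto s^{n-1}e^{s^2/4}$, and your induction is valid, so this is a cosmetic rather than substantive difference. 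Two respects in which your write-up is actually more complete than the paper's: you make explicit the continuation argument needed to justify $H(w_i(s))\geq m_H$ (the paper simply asserts the lower bound \eqref{l311d}), and you derive the bound on $|w_1'-w_2'|$ from \eqref{l23c}, whereas the paper's proof concludes only with $|w_1-w_2|<\epsilon$ even though the statement of the lemma asserts the maximum over both differences.
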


\begin{proof}
Via \eqref{l23b}
\begin{equation} \label{l311c} w_{i}(\eta) = \alpha_{i} + \int_{0}^{\eta} \frac{1}{t^{n-1}e^{\frac{t^2}{4}}} \int_{0}^{t} H(w_{\alpha_{i}}(s)) s^{n-1} e^{\frac{s^2}{4}} dsdt, \end{equation}
for all $\eta \in [0, \infty)$ and $i=1,2$. 
Let $0<\delta <\frac{\alpha_{1}}{4}$, then since $|w_{i}(\eta)| < (1-p)^{1/(1-p)}$ for all $\eta \in [0,\infty)$, via \eqref{l311c}, we have 
\begin{equation} \label{l311d}
w_{i}(\eta) \geq \alpha_{i} + \int_{0}^{\eta} \frac{1}{t^{n-1} e^{\frac{t^2}{4}}} \int_{0}^{t} s^{n-1} e^{\frac{s^2}{4}} m_{H} dsdt
> \frac{3\alpha_{1}}{4} + m_{H} \frac{\eta^2}{2}
\geq \frac{\alpha_{1}}{2}
\end{equation}
for all $\eta \in [0, \eta_{1}]$, $i=1,2$.
Additionally, via \eqref{l311c}, we have,
\begin{equation*} \label{l311e} 
|w_{1}(\eta) - w_{2}(\eta)| \leq |\alpha_{1} - \alpha_{2}| + \int_{0}^{\eta} \frac{1}{t^{n-1} e^{\frac{t^2}{4}}} \int_{0}^{t} | H(w_{1}(s)) - H(w_{2}(s))| s^{n-1}e^{\frac{s^2}{4}} dsdt \ \ \ \forall \eta \in [0, \eta_{1}]. 
\end{equation*}
Since $H$ given by \eqref{s2l} is bounded and Lipschitz continuous, on $\left[ \frac{\alpha_{1}}{2} , (1-p)^{1/(1-p)} \right]$, we have 
\begin{equation} \label{l311f} |H(w_{1}(\eta)) - H(w_{2}(\eta))| \leq K_{\alpha} | w_{1}(\eta) - w_{2}(\eta)| \ \ \ \forall \eta \in [0, \eta_{1}], 
\end{equation}
with $K_{\alpha}$ a Lipschitz constant for $H$ on $\left[\frac{\alpha_{1}}{2} , (1-p)^{1/(1-p)} \right]$. 
It follows from \eqref{l311d}-\eqref{l311f} that 
\begin{align} 
\nonumber |w_{1}(\eta) - w_{2}(\eta)| & \leq |\alpha_{1} - \alpha_{2}| + \int_{0}^{\eta} \int_{0}^{t} K_{\alpha} |w_{1}(s) - w_{2}(s)| dsdt \\
\label{l311g} & \leq |\alpha_{1} - \alpha_{2}| +  \int_{0}^{\eta} K_{\alpha} \eta_{1} |w_{1}(s) - w_{2}(s) | ds.
\end{align}
An application of Gronwall's Lemma to \eqref{l311g} yields 
\begin{equation*} \label{l311h} 
|w_{1}(\eta) - w_{2}(\eta)| \leq |\alpha_{1} - \alpha_{2}| e^{  K_{\alpha} \eta_{1} \eta } \leq |\alpha_{1} - \alpha_{2}| e^{K_{\alpha} \eta_{1}^2} \ \ \  \forall \eta \in [0, \eta_{1}]. 
\end{equation*}
Therefore, provided that $0 < \delta < \min \left\{ \frac{\alpha_{1}}{4} , \epsilon e^{-K_{\alpha} \eta_{1}^2} \right\}$, it follows that
\[ |w_{1}(\eta) - w_{2}(\eta)| < \epsilon \quad \forall \eta \in [0,\eta_{1}], \]
as required.
\end{proof}

Secondly, we have, 

\begin{lem} \label{l312}
Let $w_1:[0,\infty) \to \mathbb{R}$ be the solution to (P) with $0<\alpha_1<(1-p)^{1/(1-p)}$. 
Then, for any $\epsilon >0$ and any $\eta_2>0$ there exists $\delta>0$ such that if $|\alpha_{1} - \alpha_{2}| < \delta$, the solution to (P) with $0<\alpha_2 <(1-p)^{1/(1-p)}$, denoted by $w_2:[0,\infty) \to \field{R}$ satisfies
\begin{equation*} \label{l312a} \max \{ |w_{1}(\eta) - w_{2}(\eta) |, |w_{1}'(\eta) - w_{2}'(\eta) | \} < \epsilon \ \ \ \forall \eta \in [0, \eta_{2}]. \end{equation*}
\end{lem}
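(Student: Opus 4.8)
The plan is to run the ``uniqueness implies continuous dependence'' argument on the non-singular interval $[\eta_1,\eta_2]$ and to glue it to the estimate already obtained on $[0,\eta_1]$ in Lemma~\ref{l311}, where $\eta_1=\sqrt{\alpha_1/(2|m_H|)}$ is the fixed positive number from that lemma. Since Lemma~\ref{l311} already yields the conclusion on $[0,\eta_1]$, I may assume $\eta_2>\eta_1$. The point of working on $[\eta_1,\eta_2]$ is that there $\eta\ge\eta_1>0$, so the coefficient $\tfrac{n-1}{\eta}+\tfrac{\eta}{2}$ in \eqref{s2g} is bounded; the obstruction to a naive Gronwall estimate is then no longer the singularity at $\eta=0$ but the loss of Lipschitz continuity of $H$ on the plane $w=0$, which I circumvent by appealing to the global uniqueness result of Proposition~\ref{p37} rather than to a direct differential inequality.

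I would argue by contradiction. Suppose the claim fails for some $\epsilon_0>0$ and $\eta_2>\eta_1$; then there is a sequence $\alpha_2^{(k)}\to\alpha_1$ with $0<\alpha_2^{(k)}<(1-p)^{1/(1-p)}$ whose solutions $w_2^{(k)}$ to (P) satisfy $\sup_{[0,\eta_2]}\max\{|w_1-w_2^{(k)}|,|w_1'-(w_2^{(k)})'|\}\ge\epsilon_0$ for every $k$. By Lemma~\ref{l311} (applied with $\epsilon=\epsilon_0$) the deviation is below $\epsilon_0$ on $[0,\eta_1]$ once $k$ is large, so it must be realised on $(\eta_1,\eta_2]$. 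Theorem~\ref{c34} provides the uniform a priori bounds $(w_2^{(k)},(w_2^{(k)})')\in\Omega_{c^*(p)}$, giving $|w_2^{(k)}|\le(1-p)^{1/(1-p)}$ and $|(w_2^{(k)})'|\le\sqrt{2c^*(p)}$ on $[\eta_1,\eta_2]$. Inserting these into \eqref{s2g} and using $|H|\le M_H$ on $[-(1-p)^{1/(1-p)},(1-p)^{1/(1-p)}]$ together with the boundedness of $\tfrac{n-1}{\eta}+\tfrac{\eta}{2}$ on $[\eta_1,\eta_2]$ bounds $(w_2^{(k)})''$ uniformly there. Hence $\{w_2^{(k)}\}$ is bounded and equicontinuous in $C^1([\eta_1,\eta_2])$, and by the Arzel\`a--Ascoli theorem a subsequence (not relabelled) converges in $C^1([\eta_1,\eta_2])$ to a limit $\bar w$.

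Next I would identify $\bar w$. Passing to the limit in the integrated form of \eqref{s2g} on $[\eta_1,\eta_2]$, where the coefficients and $H$ are continuous and the convergence is uniform in $C^1$, shows that $\bar w$ solves \eqref{s2b} on $[\eta_1,\eta_2]$; moreover Lemma~\ref{l311} forces $\bar w(\eta_1)=w_1(\eta_1)$ and $\bar w'(\eta_1)=w_1'(\eta_1)$. I then define $\hat w:[0,\eta_2]\to\field{R}$ by $\hat w=w_1$ on $[0,\eta_1]$ and $\hat w=\bar w$ on $[\eta_1,\eta_2]$. Since the two pieces share value and first derivative at $\eta_1$ and each satisfies \eqref{s2b}, and since \eqref{s2g} determines $w''$ continuously from $(w,w',\eta)$ at the regular point $\eta_1$, the second derivatives also match, so $\hat w\in C^2([0,\eta_2])$ solves \eqref{s2b} on $(0,\eta_2]$ with $\hat w(0)=\alpha_1$ and $\hat w'(0)=0$. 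Thus $\hat w$ is a solution to (P) restricted to $[0,\eta_2]$, and Proposition~\ref{p37} (uniqueness of (P) on $[0,\eta^*]$ with $\eta^*=\eta_2$) gives $\hat w\equiv w_1$, hence $\bar w\equiv w_1$ on $[\eta_1,\eta_2]$.

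This produces the required contradiction: along the chosen subsequence $w_2^{(k)}\to w_1$ in $C^1([\eta_1,\eta_2])$, so $\sup_{[\eta_1,\eta_2]}\max\{|w_1-w_2^{(k)}|,|w_1'-(w_2^{(k)})'|\}\to0$, contradicting the assumption that this quantity stays at least $\epsilon_0$. Combining the resulting estimate on $[\eta_1,\eta_2]$ with Lemma~\ref{l311} on $[0,\eta_1]$ then yields the conclusion on all of $[0,\eta_2]$. I expect the delicate points to be the verification that the glued function $\hat w$ is genuinely a solution to (P) in the sense of Lemma~\ref{l23} (i.e.\ that matching at the regular point $\eta_1$ preserves the $C^2$-regularity), and, more conceptually, the fact that the non-Lipschitz nonlinearity is handled not by a quantitative differential inequality but by invoking the already-established global uniqueness of Proposition~\ref{p37}; this is exactly the ``uniqueness implies continuous dependence'' mechanism, and its success rests on the uniform a priori confinement to $\Omega_{c^*(p)}$ supplied by Theorem~\ref{c34}.
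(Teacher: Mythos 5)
Your proposal is correct and follows essentially the same route as the paper: the paper likewise splits at $\eta_1$ from Lemma \ref{l311} and, on $[\eta_1,\eta_2]$, combines the uniqueness supplied by Proposition \ref{p37} with the ``uniqueness implies continuous dependence'' theorem of Coddington--Levinson \cite[Theorem 4.3, p.59]{codd}. The only difference is that where the paper cites that theorem, you reprove it by hand (contradiction, the a priori bounds of Theorem \ref{c34}, Arzel\`a--Ascoli, and identification of the limit via your gluing argument plus Proposition \ref{p37}), which is precisely the standard proof of the cited result; your explicit gluing also fills in the step the paper leaves implicit when it asserts that Proposition \ref{p37} yields uniqueness for the initial value problem posed at $\eta_1$.
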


\begin{proof}
Without loss of generality suppose that $\eta_{2} > \eta_{1}$, for $\eta_{1}$ given in Lemma \ref{l311}.
It follows from Proposition \ref{p37} that the [IVP] given by \eqref{l312b}-\eqref{l312f}:
\begin{align}
\label{l312b} & (w)' = w' , \\
\label{l312c} & (w')' = H(w) - \left( \frac{(n-1)}{\eta} + \frac{\eta}{2} \right) w' \ \ \ \forall \eta \in [\eta_{1}, \eta_{2}] , \\
\label{l312e} & (w(\eta_{1}), w'(\eta_{1})) = (w_{i}(\eta_{1}) , w_i'(\eta_{1})) , \\
\label{l312f} & (w,w') \in C^1 ([ \eta_{1} , \eta_{2}]) ,
\end{align}
have unique solutions, given by $\left. (w_{i}, w_i' )\right|_{[\eta_{1}, \eta_{2}]}$ for $i=1,2$. 
Therefore, via \cite[Theorem 4.3, p.59]{codd}, there exists $\delta_{1} > 0$ such that provided
\begin{equation} \label{l312g} \max \{ |w_{1}(\eta_{1}) - w_{2}(\eta_{1}) | , |w_{1}'(\eta_{1}) - w_{2}'(\eta_{1})| \} < \delta_{1} \end{equation}
then
\begin{equation} \label{l312h} 
\max \{ | w_{1}(\eta) - w_{2}(\eta) | , |w_{1}'(\eta) - w_{2}'(\eta)| \} < \epsilon \quad \forall \eta \in [\eta_{1}, \eta_{2}]. 
\end{equation}
Setting $\epsilon =\delta_1$ in Lemma \ref{l311}, it follows that there exists $\delta >0$ such that for all $\alpha_{2}$ that satisfy $|\alpha_{1} - \alpha_{2}| < \delta$, we have
\begin{equation} \label{l312i} 
\max \{ |w_{1}(\eta) - w_{2}(\eta) | , |w_{1}'(\eta) - w_{2}'(\eta)| \} < \delta_{1} = \epsilon \ \ \ \forall \eta \in [0, \eta_{1}]. 
\end{equation}
The result follows from \eqref{l312g}-\eqref{l312i}, as required.
\end{proof}

Thirdly, we have,

\begin{lem} \label{l313}
Let $w_1:[0,\infty) \to \mathbb{R}$ be the solution to (P) with $0<\alpha_1<(1-p)^{1/(1-p)}$. 
Then for any $\epsilon >0$, there exists $\delta >0$ such that if $| \alpha_{1} - \alpha_{2} | < \delta$, the solution to (P) with $0<\alpha_2 <(1-p)^{1/(1-p)}$, denoted by $w_2:[0,\infty) \to \field{R}$ satisfies
\begin{equation*} \label{l313a} 
\max \{ |w_{1}(\eta) - w_{2}(\eta) | , |w_{1}'(\eta) - w_{2}'(\eta)| \} < \epsilon \quad \forall \eta \in [0, \infty). 
\end{equation*}
\end{lem}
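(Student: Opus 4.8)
The plan is to prove the statement by splicing together the finite-interval result already in hand with a tail estimate, using the monotone Lyapunov-type quantity $V$ as the engine for the tail. Concretely, Lemma \ref{l312} already furnishes continuous dependence on any fixed interval $[0,\eta_2]$, so the entire content of the lemma reduces to controlling the difference $(w_1-w_2, w_1'-w_2')$ on the tail $[\eta_2,\infty)$ \emph{uniformly} in $\alpha_2$, for a suitably chosen $\eta_2$. For this I would write $F_i(\eta)=V(w_i(\eta),w_i'(\eta))$ (as in \eqref{s3i}) for $i=1,2$, and exploit two facts: first, by Theorem \ref{t310} we have $(w_1,w_1')\to(0,0)$, hence $F_1(\eta)\to V(0,0)=0$ and, being non-increasing by \eqref{s3j} and Lemma \ref{l39}, $F_1$ can be driven below any prescribed threshold by taking $\eta_2$ large; second, by Theorem \ref{c34} each trajectory stays in $\Omega_{c_i}\subseteq\Omega_{c^*(p)}=\mathcal{H}$ with $c_i=V(\alpha_i,0)$.

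The geometric ingredient I would isolate first is the following consequence of the level-curve structure inside $\mathcal{H}$: since $V$ is continuous, vanishes only at $(0,0)$ inside $\mathcal{H}$, and equals $c^*(p)>0$ on $\partial\Omega_{c^*(p)}$, the set $\{(w,w')\in\overline{\Omega_{c^*(p)}}: |(w,w')|\ge \rho\}$ is compact and avoids the origin, so $V$ attains a positive minimum there. Hence for every $\rho>0$ there is $\gamma>0$ with the property that any $(w,w')\in\overline{\Omega_{c^*(p)}}$ satisfying $V(w,w')\le\gamma$ must satisfy $|(w,w')|<\rho$. I would record this as a short compactness claim and apply it with $\rho=\epsilon/2$.

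With $\gamma$ in hand I would assemble the estimate as follows. Using $F_1(\eta)\to 0$, fix $\eta_2$ (without loss of generality $\eta_2>\eta_1$) so large that $F_1(\eta_2)<\gamma/2$. By uniform continuity of $V$ on the relevant compact region, choose a tolerance so small that a max-norm closeness of $(w_2(\eta_2),w_2'(\eta_2))$ to $(w_1(\eta_2),w_1'(\eta_2))$ forces $|F_2(\eta_2)-F_1(\eta_2)|<\gamma/2$, whence $F_2(\eta_2)<\gamma$. Now Lemma \ref{l312} (applied on $[0,\eta_2]$ with tolerance equal to the minimum of $\epsilon/2$ and the tolerance just described) produces $\delta>0$ such that $|\alpha_1-\alpha_2|<\delta$ yields both $\max\{|w_1-w_2|,|w_1'-w_2'|\}<\epsilon$ on $[0,\eta_2]$ and $F_2(\eta_2)<\gamma$. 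Since $F_2$ is non-increasing, $F_2(\eta)\le F_2(\eta_2)<\gamma$ and likewise $F_1(\eta)\le F_1(\eta_2)<\gamma$ for all $\eta\ge\eta_2$; the compactness claim then gives $|(w_i(\eta),w_i'(\eta))|<\epsilon/2$ for $i=1,2$ on $[\eta_2,\infty)$, so by the triangle inequality $\max\{|w_1(\eta)-w_2(\eta)|,|w_1'(\eta)-w_2'(\eta)|\}<\epsilon$ there as well. Taking the minimum of the two $\delta$'s completes the argument on all of $[0,\infty)$.

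The step I expect to be the genuine obstacle is obtaining uniformity over $\alpha_2$ across the \emph{infinite} tail: a naive Gronwall propagation would blow up over an unbounded interval, and Theorem \ref{t310} only gives convergence for each fixed solution, not a modulus uniform in the initial datum. The resolution, and the conceptual heart of the proof, is that monotonicity of $F$ along trajectories reduces the tail control to a \emph{single}-time bound on $F_2(\eta_2)$, which is accessible through the finite-interval continuous dependence of Lemma \ref{l312}; the Lyapunov function then carries the smallness out to $\eta\to\infty$ automatically. The remaining care is purely in the bookkeeping of the three tolerances ($\epsilon/2$ on $[0,\eta_2]$, the $V$-continuity tolerance at $\eta_2$, and the threshold $\gamma$) and in verifying that the compact region on which uniform continuity of $V$ is invoked is indeed fixed once $\eta_2$ is chosen.
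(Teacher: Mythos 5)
Your proof is correct and takes essentially the same route as the paper: the paper likewise combines the finite-interval result of Lemma \ref{l312} on $[0,\eta_a]$ (with $\eta_a$ chosen via Theorem \ref{t310} so that $(w_1,w_1')$ is near the origin) with trapping of both trajectories for $\eta\geq\eta_a$ inside a small sublevel region $\Omega_{c(\epsilon)}$ of $V$, using the monotonicity of $V$ along trajectories and the inclusions $B_{2(\epsilon_a+\epsilon_b)}(0,0)\subset\Omega_{c(\epsilon)}\subset B_\epsilon (0,0)$. Your compactness and uniform-continuity claims are just a restatement of the paper's observation that the level regions $\Omega_c$ are neighbourhoods of the origin with $\Omega_c\subset B_{r_c}(0,0)$ and $r_c\to 0$ as $c\to 0$, so the two arguments coincide in substance.
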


\begin{proof}
Set $\epsilon >0$.
To begin, consider the level curves of $V$ denoted by $\partial\Omega_c$ that are closed and concentric with $(0,0)$.
For $0 < c < c^*(p)$, define the positive real numbers $w_c$ and $w_c'$ via the rules $V(w_c,0)=c$ and $V(0,w_c')=c$ respectively. 
Then for $r_{c}=\sqrt{w_c^2+{w_c'}^2}$, we have $\Omega_c \subset B_{r_{c}}(0,0)$ with $B_r((w,w'))$ denoting the Euclidean ball in $\mathbb{R}^2$ of radius $r$ with centre at $(w,w')$. 
Observe that $r_{c}\to 0$ as $c\to 0$.

Now, for any $\epsilon_a>0$, via Theorem \ref{t310}, there exists $\eta_a>0$ such that
\begin{equation} \label{l313c}
(w_{1}(\eta ),w_{1}'(\eta )) \in B_{\epsilon_a}(0,0) \ \ \ \forall \eta\in [ \eta_a,\infty ) .
\end{equation} 
Additionally, via Lemma \ref{l312}, for any $\epsilon_b >0$, there exists $\delta >0$ such that for all $|\alpha_1-\alpha_2|<\delta$, we have
\begin{equation} \label{l313d}
\max \{ |w_{1}(\eta ) - w_{2}(\eta )|, |w_{1}'(\eta ) - w_{2}'(\eta )|  \} < \epsilon_b \ \ \ \forall \eta \in [0,\eta_a].
\end{equation}
Via \eqref{l313c} and \eqref{l313d}, it follows that 
\begin{equation} \label{l313e}
(w_{2}(\eta ) ,w_{2}'(\eta )) \in B_{2(\epsilon_a+\epsilon_b)}(0,0) \ \ \ \forall \eta \in [\eta_a,\infty).
\end{equation}
Since $\Omega_c$ are open and have centre $(0,0)$, we can select $\epsilon_a$ and $\epsilon_b$ sufficiently small so that for some $c(\epsilon )\in (0,c^*(p))$, we have 
\begin{equation} \label{l313f} 
B_{2(\epsilon_a+\epsilon_b)}(0,0) \subset \Omega_{c(\epsilon)} \subset B_\epsilon(0,0) .
\end{equation}
Thus, it follows from \eqref{l313d}-\eqref{l313f} that
\begin{equation*} 
\max \{ |w_{1}(\eta ) - w_{2}(\eta )|, |w_{1}'(\eta ) - w_{2}'(\eta )|  \} < \epsilon \ \ \ \forall \eta \in [0,\infty ),
\end{equation*}
as required.
\end{proof}

\subsection{Summary}
We now amalgamate the main results in \S \ref{gwp} into the following well-posedness result for (P).

\begin{thm} \label{t314}
Let $ 0 \leq \alpha_{1} < (1-p)^{1/(1-p)}$. 
Then (P) has a unique solution $w_1 : [0, \infty) \to \field{R}$ and for any $\epsilon >0$ there exists $\delta >0$ such that for all $ 0\leq \alpha_{2} < (1-p)^{1/(1-p)}$ such that $|\alpha_{1} - \alpha_{2}| < \delta$, there exists a unique solution to (P) with $0 \leq \alpha_2< (1-p)^{1/(1-p)}$ denoted by $w_2: [0, \infty) \to \field{R}$ and 
\[ \max \{ |w_{1}(\eta) - w_{2}(\eta)|, |w_{1}'(\eta) - w_{2}'(\eta)| \} < \epsilon \ \ \ \forall \eta \in [0, \infty) . \]
Moreover, $(w_{i}, w_{i}') \to (0,0)$ as $\eta \to \infty$.
\end{thm}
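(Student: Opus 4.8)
The plan is to assemble Theorem \ref{t314} from the existence, uniqueness, continuous dependence and asymptotic results already established in this section, handling the degenerate datum $\alpha=0$ separately from the generic range $0<\alpha<(1-p)^{1/(1-p)}$. Since every ingredient is in place, the only real work is to verify that the boundary value $\alpha=0$ is covered, as the continuous dependence Lemma \ref{l313} and the convergence Theorem \ref{t310} are stated for strictly positive data.

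First I would dispose of existence and uniqueness for each admissible datum. For $0<\alpha_i<(1-p)^{1/(1-p)}$, Theorem \ref{c34} supplies a solution $w_i:[0,\infty)\to\field{R}$ with $(w_i(\eta),w_i'(\eta))\in\Omega_{V(\alpha_i,0)}$ for all $\eta\in(0,\infty)$, while Proposition \ref{p37} gives uniqueness on $[0,\eta^*]$ for every $\eta^*>0$; as $\eta^*$ is arbitrary, this is uniqueness on $[0,\infty)$. For $\alpha_i=0$, Remark \ref{r35} identifies the unique solution as the equilibrium $w_i\equiv 0$. This establishes existence and uniqueness of both $w_1$ and $w_2$ for all $0\leq\alpha_i<(1-p)^{1/(1-p)}$. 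The asymptotic claim is equally immediate: for $0<\alpha_i<(1-p)^{1/(1-p)}$ it is exactly Theorem \ref{t310}, and for $\alpha_i=0$ the solution is identically $(0,0)$, so $(w_i,w_i')\to(0,0)$ as $\eta\to\infty$ in every case.

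It then remains to establish the uniform continuous dependence estimate on $[0,\infty)$. When $0<\alpha_1<(1-p)^{1/(1-p)}$, I would fix $\epsilon>0$, shrink $\delta$ (if necessary) so that $|\alpha_1-\alpha_2|<\delta$ forces $0<\alpha_2<(1-p)^{1/(1-p)}$, and invoke Lemma \ref{l313} directly to obtain the desired bound on $\max\{|w_1-w_2|,|w_1'-w_2'|\}$ over all of $[0,\infty)$.

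The one genuinely new point, and the main obstacle, is the boundary datum $\alpha_1=0$, which falls outside the hypotheses of Lemma \ref{l313}. Here $w_1\equiv 0$, so I would argue directly from the a priori bound of Theorem \ref{c34}: for any $0\leq\alpha_2<(1-p)^{1/(1-p)}$ the solution $w_2$ satisfies $(w_2(\eta),w_2'(\eta))\in\Omega_{V(\alpha_2,0)}$ for all $\eta\in(0,\infty)$. Using the containment $\Omega_c\subset B_{r_c}(0,0)$ with $r_c\to 0$ as $c\to 0$ recorded in the proof of Lemma \ref{l313}, together with $V(0,0)=0$ and the continuity of $\alpha\mapsto V(\alpha,0)$, it follows that $r_{V(\alpha_2,0)}\to 0$ as $\alpha_2\to 0$. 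Given $\epsilon>0$, choosing $\delta$ so that $|\alpha_2|<\delta$ forces $r_{V(\alpha_2,0)}<\epsilon$ then yields $\max\{|w_1(\eta)-w_2(\eta)|,|w_1'(\eta)-w_2'(\eta)|\}=\max\{|w_2(\eta)|,|w_2'(\eta)|\}<\epsilon$ for all $\eta\in[0,\infty)$. Combining the generic case with this boundary case completes the continuous dependence statement, and hence the theorem.
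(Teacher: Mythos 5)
Your proposal is correct and follows essentially the same route as the paper: the paper's proof is exactly this assembly of Theorem \ref{c34}, Remark \ref{r35} and Proposition \ref{p37} for existence/uniqueness, Lemma \ref{l313} for continuous dependence when $0<\alpha_1<(1-p)^{1/(1-p)}$, and Theorem \ref{t310} for the limit behaviour. Your explicit argument for the boundary case $\alpha_1=0$ (confining $w_2$ in $\Omega_{V(\alpha_2,0)}\subset B_{r}(0,0)$ with $r\to 0$ as $\alpha_2\to 0$) is precisely the content the paper compresses into its citation ``for $\alpha_1=0$ see Theorem \ref{c34} and Remark \ref{r35}.''
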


\begin{proof}
Existence and uniqueness are given by Theorem \ref{c34}, Remark \ref{r35} and Proposition \ref{p37}. 
Continuous dependence on the initial data is established in Lemma \ref{l313} for $0 < \alpha_{1} < (1-p)^{1/(1-p)}$ and for $\alpha_{1} = 0$ see Theorem \ref{c34} and Remark \ref{r35}. 
Theorem \ref{t310} establishes that solutions to (P) tend to $(0,0)$ as $\eta \to \infty$, as required.
\end{proof}

\section{Qualitative Properties of solutions to (P)} \label{qp}
In this section, we establish that solutions $w:[0,\infty)\to\mathbb{R}$ to (P) with $0 < \alpha < (1-p)^{1/(1-p)}$, tend to $0$ algebraically as $\eta \to \infty$.
Furthermore, we establish that these solutions oscillate as $\eta \to \infty$.

The algebraic decay bounds here are established for solutions to (P), that are analogous to those in \cite{meyer} (for (P) with $0<p<1$ and $n=1$) and obtained via a bootstrap argument that appeared in \cite{weis} (for (P) with $p>1$ and $n\in\mathbb{N}$).
We note here that if one uses these algebraic decay bounds directly with oscillation theory for second order ordinary differential equations (see, for example \cite{kurt} or \cite{prot}), it does not appear possible to establish that solutions to (P) oscillate. 
Consequently the approach used to establish oscillation of solutions to (P) in what follows, is largely independent of standard methods from oscillation theory for second order ordinary differential equations.
More specifically, instead of employing a comparison principle of Sturmian-type for zeros of solutions to second order ordinary differential equations, we use a specific comparison theorem for solutions to second order semi-linear parabolic partial differential inequalities on an unbounded domain, which appeared in \cite[Theorem 2.8]{ag}.

\subsection{Algebraic Decay Bounds for Solutions to (P) as $\eta \to \infty$}

To begin, we have

\begin{prop} \label{p4.5}
Let $w:[0,\infty) \to \mathbb{R}$ be a solution to (P) with $0<\alpha<(1-p)^{\frac{1}{1-p}}$. 
Suppose that for $\sigma \in [0,\frac{2}{(1-p)}]$, and for $c_{1} >0$, that
\begin{equation} \label{p4.5a}
|w(\eta)| \leq \frac{c_{1}}{\eta^{\sigma}} \quad \forall \eta \in (0,\infty).
\end{equation}
Then,  
\begin{equation} \label{p4.5b}
|w'(\eta)| \leq \frac{1}{\eta^{\sigma p+1}} \left( \frac{M_H}{2} \sup_{s \in (0, \infty)} \left( s^{\frac{2p}{1-p} +2} e^{-\frac{3}{16}s^2}\right) + c_1^p 2^{\sigma p+2} \right) \quad \forall \eta \in (0,\infty).
\end{equation}
\end{prop}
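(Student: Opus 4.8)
The plan is to start from the integral representation of $w'$ given in \eqref{l23c}, namely
\[
w'(\eta)=\frac{1}{\eta^{n-1}e^{\eta^2/4}}\int_0^\eta H(w(s))\,s^{n-1}e^{s^2/4}\,ds,\qquad \eta\in(0,\infty),
\]
to take absolute values, and to split the integral at the midpoint $s=\eta/2$. The two resulting pieces are treated with two different bounds on $|H(w)|$: a crude uniform bound near $s=0$, where the hypothesis \eqref{p4.5a} is useless because $c_1 s^{-\sigma}$ blows up as $s\to 0^+$, and a sharp decaying bound near $s=\eta$, where \eqref{p4.5a} is effective. The decaying bound I would use is the elementary observation that, since any solution to (P) satisfies $|w|\le(1-p)^{1/(1-p)}$ by Theorem \ref{c34}, one has $|w|^{p-1}\ge(1-p)^{-1}$ and hence
\[
|H(w)|=|w|\,\bigl|\,|w|^{p-1}-\tfrac{1}{1-p}\,\bigr|=|w|^p-\tfrac{1}{1-p}|w|\le|w|^p.
\]
This clean cancellation of the linear part of $H$ is precisely what produces the exact constant $2^{\sigma p+2}$ in \eqref{p4.5b}; retaining the linear term would introduce a spurious factor $\tfrac{2-p}{1-p}>1$ and the estimate would not close.

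For the near-zero piece $\int_0^{\eta/2}$ I would use $|H(w(s))|\le M_H$ from \eqref{s2m}, bound $e^{s^2/4}\le e^{\eta^2/16}$ on $[0,\eta/2]$, and integrate $s^{n-1}$, obtaining that this piece is at most $\frac{M_H}{n2^n}\,\eta\,e^{-3\eta^2/16}$. Rewriting this as $\eta^{-(\sigma p+1)}$ times $\frac{M_H}{n2^n}\,\eta^{\sigma p+2}e^{-3\eta^2/16}$, it then remains to check $\tfrac{1}{n2^n}\le\tfrac12$ (immediate for $n\ge1$) and $\eta^{\sigma p+2}e^{-3\eta^2/16}\le\sup_{s>0}\bigl(s^{\frac{2p}{1-p}+2}e^{-3s^2/16}\bigr)$. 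For the latter, since $\sigma\le\tfrac{2}{1-p}$ gives $\sigma p+2\le\tfrac{2p}{1-p}+2$, I would invoke the fact that $\beta\mapsto\max_{s>0}\bigl(s^{\beta}e^{-3s^2/16}\bigr)$ is increasing for $\beta\ge 2$: its value is $(8\beta/(3e))^{\beta/2}$, whose logarithm has derivative $\tfrac12\ln(8\beta/3)>0$. This reduces the exponent to the $\sigma$-independent value $\tfrac{2p}{1-p}+2$, which is exactly what makes the first term of \eqref{p4.5b} a fixed constant suitable for the bootstrap iteration that follows.

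For the near-$\eta$ piece $\int_{\eta/2}^\eta$ I would insert $|H(w(s))|\le|w(s)|^p\le c_1^p s^{-\sigma p}\le c_1^p2^{\sigma p}\eta^{-\sigma p}$, using \eqref{p4.5a} together with $s\ge\eta/2$, and then estimate the Gaussian integral by means of the pointwise inequality $e^{s^2/4}\le\frac{4}{\eta}\frac{d}{ds}e^{s^2/4}$, valid on $[\eta/2,\eta]$ because $\tfrac{s}{2}\ge\tfrac{\eta}{4}$ there. This yields $\int_{\eta/2}^\eta s^{n-1}e^{s^2/4}\,ds\le\eta^{n-1}\cdot\frac{4}{\eta}e^{\eta^2/4}=4\eta^{n-2}e^{\eta^2/4}$, so the near-$\eta$ piece is bounded by $c_1^p2^{\sigma p+2}\eta^{-\sigma p-1}$, matching the second term of \eqref{p4.5b} exactly. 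Adding the two contributions then gives \eqref{p4.5b}.

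I expect the main obstacle to be organising the constants so that they land exactly on the right-hand side of \eqref{p4.5b}. The two delicate points are the cancellation $|H(w)|\le|w|^p$, without which the constant $2^{\sigma p+2}$ does not close, and the integration-by-parts-type bound on the Gaussian integral over $[\eta/2,\eta]$ that converts the $e^{\eta^2/4}$ growth into the harmless factor $4/\eta$. The monotonicity reduction of the exponent is routine, but it does require the observation that $\sigma p+2\ge 2$ already lies in the increasing regime of $\beta\mapsto\max_{s>0}\bigl(s^\beta e^{-3s^2/16}\bigr)$.
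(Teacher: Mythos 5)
Your proposal is correct and follows essentially the same route as the paper's proof: the same integral representation \eqref{l23c}, the same split at $s=\eta/2$, the same cancellation $|H(w)|\le |w|^p$ on the outer piece, and the same reduction of the near-zero piece to the $\sigma$-independent supremum $\sup_{s>0}\bigl(s^{\frac{2p}{1-p}+2}e^{-\frac{3}{16}s^2}\bigr)$. The only (harmless) deviations are that you integrate $s^{n-1}$ exactly rather than bounding $s^{n-1}\le\eta^{n-1}$, and that you explicitly verify the monotonicity in $\beta$ of $\sup_{s>0}\bigl(s^{\beta}e^{-\frac{3}{16}s^2}\bigr)$, a step the paper uses implicitly.
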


\begin{proof}
Observe via \eqref{s2l}, \eqref{p4.5a} and Theorem \ref{c34} that
\begin{equation} \label{p4.5c}
|H(w(\eta))|= \left| \frac{1}{(1-p)}w(\eta)-|w(\eta)|^{p-1}w(\eta)\right| \leq |w(\eta)|^p \leq \frac{c_{1}^{p}}{\eta^{\sigma p}} \quad \forall \eta \in (0,\infty).
\end{equation}
Thus, via \eqref{l23c} and \eqref{p4.5c}, we have
\begin{align}
\nonumber |w'(\eta)| &\leq \frac{1}{\eta^{n-1} e^{\frac{\eta^2}{4}}} \int_{0}^{\eta} |H(w(s))| s^{n-1}e^{\frac{s^2}{4}} ds \\
\nonumber & \leq \frac{1}{e^{\frac{\eta^{2}}{4}}} \left( \int_{0}^{\frac{\eta}{2}} |H(w(s))|e^{\frac{s^2}{4}} ds + c_{1}^{p} \int_{\frac{\eta}{2}}^{\eta} \frac{e^{\frac{s^2}{4}}}{s^{\sigma p}} ds \right) \\
\label{p4.5d}  & \leq  \frac{M_{H}\eta e^{-\frac{3}{16} \eta^2}}{2}  + \frac{c_1^p2^{\sigma p+2}}{\eta^{\sigma p+1}}
\end{align}
for all $ \eta \in (0, \infty)$. 
Observe that
\begin{align}
\nonumber \frac{M_{H} \eta e^{-\frac{3}{16}\eta^2}}{2} & \leq \frac{M_H}{2\eta^{\sigma p +1}} \left( \eta^{\sigma p +2} e^{-\frac{3}{16}\eta^2} \right) \\
\nonumber & \leq \frac{M_H}{2\eta^{\sigma p +1}} \sup_{s \in (0, \infty)} \left( s^{\sigma p +2} e^{-\frac{3}{16}s^2} \right) \\
\label{p4.5e} & \leq \frac{M_H}{2\eta^{\sigma p +1}}\sup_{s \in (0, \infty)} \left( s^{\frac{2p}{1-p} +2} e^{-\frac{3}{16}s^2} \right) 
\end{align}
for all $\eta \in (0, \infty)$ and $\sigma \in \left[0, \tfrac{2}{(1-p)} \right]$. 
Substituting \eqref{p4.5e} into \eqref{p4.5d} yields \eqref{p4.5b}, as required.
\end{proof}

A simple consequence of Proposition \ref{p4.5} and Theorem \ref{t314} is

\begin{prop} \label{p4.6}
Let $w:[0,\infty) \to \mathbb{R}$ be a solution to (P) with $0<\alpha <(1-p)^{\frac{1}{1-p}}$. 
Then
\begin{equation*}
|w'(\eta)| \leq \frac{1}{\eta} \left( \frac{M_H}{2} \sup_{s \in (0, \infty)} \left( s^{\frac{2p}{1-p} +2} e^{-\frac{3}{16}s^2}\right) + 4(1-p)^{p/(1-p)} \right) \quad \forall \eta \in (0,\infty).
\end{equation*}
\end{prop}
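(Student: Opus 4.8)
The plan is to apply Proposition \ref{p4.5} with the choice $\sigma = 0$ and $c_1 = (1-p)^{1/(1-p)}$, and then verify that these values are admissible. The point of Proposition \ref{p4.6} is simply to feed the crudest available bound on $|w|$ into the machinery of Proposition \ref{p4.5} and read off the conclusion.

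First I would observe that, by Theorem \ref{t314} (equivalently, Theorem \ref{c34} together with the \emph{a priori} bound in \eqref{s2j}), any solution $w:[0,\infty)\to\field{R}$ to (P) with $0<\alpha<(1-p)^{1/(1-p)}$ satisfies
\begin{equation*}
|w(\eta)| \leq (1-p)^{1/(1-p)} \quad \forall \eta \in (0,\infty),
\end{equation*}
since $(w(\eta),w'(\eta)) \in \Omega_{c^*(p)}$ for all $\eta$ and the projection of $\Omega_{c^*(p)}$ onto the $w$-axis is contained in $[-(1-p)^{1/(1-p)},(1-p)^{1/(1-p)}]$. This is precisely the hypothesis \eqref{p4.5a} of Proposition \ref{p4.5} with $\sigma = 0$ and $c_1 = (1-p)^{1/(1-p)}$, and clearly $\sigma = 0 \in [0,\tfrac{2}{(1-p)}]$ and $c_1 > 0$, so the hypotheses are met.

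Next I would substitute $\sigma = 0$ and $c_1 = (1-p)^{1/(1-p)}$ into the conclusion \eqref{p4.5b}. With $\sigma = 0$ the exponent $\sigma p + 1$ collapses to $1$, so $\eta^{\sigma p + 1} = \eta$; the factor $2^{\sigma p + 2}$ becomes $2^2 = 4$; and $c_1^p = (1-p)^{p/(1-p)}$. Thus
\begin{equation*}
|w'(\eta)| \leq \frac{1}{\eta}\left( \frac{M_H}{2} \sup_{s \in (0,\infty)}\left( s^{\frac{2p}{1-p}+2} e^{-\frac{3}{16}s^2}\right) + 4(1-p)^{p/(1-p)} \right) \quad \forall \eta \in (0,\infty),
\end{equation*}
which is exactly the asserted bound.

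There is essentially no obstacle here; the only thing that warrants a moment's care is confirming that the supremum $\sup_{s\in(0,\infty)}\big(s^{\frac{2p}{1-p}+2}e^{-\frac{3}{16}s^2}\big)$ is finite, so that the right-hand side is a genuine (finite) constant. Since the exponent $\tfrac{2p}{1-p}+2$ is a fixed positive number and the Gaussian factor $e^{-\frac{3}{16}s^2}$ decays faster than any polynomial grows, the integrand tends to $0$ as $s\to\infty$ and is continuous on $[0,\infty)$, hence the supremum is attained and finite. This is identical to the finiteness already used implicitly in \eqref{p4.5e}, so it requires no new work.
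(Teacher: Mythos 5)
Your proposal is correct and follows exactly the paper's own proof: the paper likewise invokes the \emph{a priori} confinement $(w,w')\in\Omega_{c^*(p)}$ (via Theorem \ref{t314}) and then applies Proposition \ref{p4.5} with $\sigma=0$, $c_1=(1-p)^{1/(1-p)}$. Your additional verification of the arithmetic and of the finiteness of the supremum is sound but adds nothing beyond what the paper leaves implicit.
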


\begin{proof}
It follows from Theorem \ref{t314}, that $(w,w')\in\Omega_{c^*(p)}$ for all $\eta \in [0,\infty )$.
The conclusion then follows from Proposition \ref{p4.5} (with $\sigma=0$, $c_{1}= (1-p)^{1/(1-p)}$), as required.
\end{proof}

We now establish the aforementioned algebraic decay bounds for solutions to (P) as $\eta\to\infty$. 

\begin{thm} \label{t4.7}
Let $w:[0,\infty) \to \mathbb{R}$ be a solution to (P) with $0<\alpha<(1-p)^{1/(1-p)}$. 
Then, for any $\epsilon>0$, there exists $c_{1\epsilon}, c_{2\epsilon} >0$ such that 
\begin{equation} \label{t4.7a}
|w(\eta)| < \frac{c_{1\epsilon}}{\eta^{\frac{2}{(1-p)}-\epsilon}} \quad \forall \eta \in (0,\infty),
\end{equation}

\begin{equation} \label{t4.7b}
|w'(\eta)| < \frac{c_{2\epsilon}}{\eta^{\frac{(1+p)}{(1-p)}-\epsilon}} \quad \forall \eta \in (0,\infty).
\end{equation}
\end{thm}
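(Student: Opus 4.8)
The plan is to run a bootstrap (iterative improvement) argument, exactly as foreshadowed in the surrounding text, feeding the output of Proposition \ref{p4.5} back into its own hypothesis and tracking how the decay exponent $\sigma$ improves at each stage. I would start from the crude bound $|w(\eta)| \leq (1-p)^{1/(1-p)}$ for all $\eta$, which corresponds to $\sigma = 0$ and holds by Theorem \ref{c34}; this is the base case of the induction. The key observation is that Proposition \ref{p4.5} converts a decay bound $|w(\eta)| \leq c_1 \eta^{-\sigma}$ (valid for $\sigma \in [0, \tfrac{2}{1-p}]$) into a derivative bound $|w'(\eta)| \leq c' \eta^{-(\sigma p + 1)}$. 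Since $(w, w') \to (0,0)$ and, more usefully, $w(\eta) = -\int_\eta^\infty w'(s)\, ds$ (using $w \to 0$ from Theorem \ref{t310}), integrating the derivative bound from $\eta$ to $\infty$ recovers an improved bound on $w$ itself: integrating $\eta^{-(\sigma p + 1)}$ gives something of order $\eta^{-\sigma p}$, so the new decay exponent for $w$ is
\begin{equation*}
\sigma_{\mathrm{new}} = \sigma p + \text{(one further power gained from the } \eta^{-1} \text{ factor and the integration)}.
\end{equation*}

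More precisely, I expect the recursion on the exponent $\sigma_k$ for $|w|$ to take the form $\sigma_{k+1} = \min\{\sigma_k p + 2,\ \tfrac{2}{1-p}\}$, because the derivative decays like $\eta^{-(\sigma_k p + 1)}$ and integrating to infinity raises the exponent by $1$, yielding $|w| \lesssim \eta^{-(\sigma_k p + 1 - 1)}$ — I would need to recheck the exact bookkeeping, but the essential point is that the map $\sigma \mapsto \sigma p + \text{const}$ is a contraction on $[0, \tfrac{2}{1-p}]$ since $0 < p < 1$. Its unique fixed point is precisely $\tfrac{2}{1-p}$ (the endpoint of the admissible range in Proposition \ref{p4.5}), and iterating from $\sigma_0 = 0$ produces a sequence $\sigma_k \uparrow \tfrac{2}{1-p}$. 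After finitely many steps $\sigma_k$ exceeds $\tfrac{2}{1-p} - \epsilon$ for any prescribed $\epsilon > 0$, which delivers \eqref{t4.7a}. Substituting the final exponent $\sigma = \tfrac{2}{1-p} - \epsilon$ back into Proposition \ref{p4.5} one last time gives the derivative bound with exponent $\sigma p + 1 = \tfrac{2p}{1-p} + 1 - \epsilon p = \tfrac{1+p}{1-p} - \epsilon p$, which (after absorbing $\epsilon p$ into a relabelled $\epsilon$) yields \eqref{t4.7b}.

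The constants $c_{1\epsilon}, c_{2\epsilon}$ accumulate multiplicatively at each of the finitely many bootstrap steps, so they remain finite; I would track them via the explicit constant appearing in \eqref{p4.5b} together with the integration constant from recovering $w$ from $w'$. To pass from a bound valid on $[\eta_\alpha, \infty)$ (which is what the asymptotic integration naturally provides) to one valid on all of $(0,\infty)$, as stated in \eqref{t4.7a}--\eqref{t4.7b}, I would enlarge the constant using continuity and boundedness of $w, w'$ on the compact remainder $(0, \eta_\alpha]$, where $w$ is bounded by $(1-p)^{1/(1-p)}$ and $w'$ is controlled by Proposition \ref{p4.6}.

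The main obstacle is the integration step that recovers an improved bound on $w$ from the bound on $w'$: one must justify $w(\eta) = -\int_\eta^\infty w'(s)\,ds$ (legitimate because $w \to 0$ by Theorem \ref{t310} and the derivative bound is integrable near $\infty$ once $\sigma p + 1 > 1$, i.e. for every $\sigma > 0$), and then verify that integrating the majorant $\eta^{-(\sigma p + 1)}$ genuinely improves the exponent rather than stalling — this is where the constraint $\sigma \leq \tfrac{2}{1-p}$ in Proposition \ref{p4.5}'s hypothesis becomes essential, since it guarantees the iteration stays within the range for which the proposition applies and converges to its fixed point. Verifying that the fixed point of the exponent recursion is exactly $\tfrac{2}{1-p}$, and that the approach is monotone and does not overshoot the admissible interval, is the delicate part of the argument; the rest is routine estimation.
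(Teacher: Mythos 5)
There is a genuine gap, and it sits exactly at the point you flagged as needing a recheck of the bookkeeping. Proposition \ref{p4.5} turns $|w(\eta)| \leq c_1\eta^{-\sigma}$ into $|w'(\eta)| \leq c'\eta^{-(\sigma p + 1)}$, and integrating this majorant from $\eta$ to $\infty$ \emph{lowers} the exponent by one (it does not raise it): $\int_\eta^\infty s^{-(\sigma p+1)}ds = (\sigma p)^{-1}\eta^{-\sigma p}$. So the recursion your argument actually produces is $\sigma_{k+1} = \sigma_k p$, not $\sigma_{k+1} = \sigma_k p + 2$; your own intermediate line $|w| \lesssim \eta^{-(\sigma_k p + 1 - 1)}$ says precisely this, and it is inconsistent with the recursion you then assert. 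Since $0<p<1$, the map $\sigma \mapsto \sigma p$ contracts toward $0$, so the bootstrap loses ground at every step rather than gaining it. Worse, it cannot even start: from the base case $\sigma_0 = 0$, Proposition \ref{p4.6} gives only $|w'| \lesssim \eta^{-1}$, which is not integrable at infinity, so the representation $w(\eta) = -\int_\eta^\infty w'(s)\,ds$ cannot be exploited at the first step. Your fixed-point arithmetic (that $\sigma = \sigma p + 2$ has solution $\sigma = \tfrac{2}{1-p}$) is correct, but nothing in the proposal produces the additive gain of $2$, and without it the whole iteration collapses.

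The paper manufactures that gain from a different source entirely. It multiplies \eqref{s2b} by $w(\eta)/\eta$ to obtain a Pohozaev-type identity, runs the bootstrap on the energy $F(\eta)=V(w(\eta),w'(\eta))$ rather than on $w$ itself, and derives an integral inequality for $G(\eta)=\int_\eta^\infty F(t)/t\,dt$ which, after a weighted Gronwall step of the form $\bigl(t^{C(p)}G(t)\bigr)' \leq c_6\, t^{C(p)-3-\frac{2\sigma p}{1+p}}$, yields $F(\eta) \lesssim \eta^{-\left(\frac{2\sigma p}{1+p}+2\right)} + \eta^{-C(p)}$. The ``$+2$'' comes from the quadratic dissipation terms in that identity (the $(w')^2/\eta$ and $ww'/\eta^2$ terms), i.e.\ from structure, not from integrating a derivative bound. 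The resulting recursion $\sigma_{m+1} = \min\{\tfrac{2\sigma_m p}{1+p} + 2,\, C(p)\}$ on the exponent of $F$ increases to $\tfrac{2(1+p)}{1-p}$, and the bound on $w$ is recovered from the coercivity estimate $F \geq c(p)|w|^{1+p}$. Proposition \ref{p4.5} is applied only once, at the very end, to deduce \eqref{t4.7b} from \eqref{t4.7a} — which is the single point where your proposal and the paper's proof genuinely coincide. To repair your argument you would need to replace the ``integrate $w'$'' step by an estimate that gains more than it loses, and that is exactly what the energy identity provides.
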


\begin{proof}
Observe on multiplying \eqref{s2b} by $\frac{w(\eta)}{\eta}$, we have, 
\begin{align}
\nonumber \frac{1}{\eta} \left[ |w(\eta)|^{1+p} - \frac{(w(\eta))^2}{(1-p)}\right] &= -\frac{w(\eta)w''(\eta)}{\eta}+ \frac{(1-n)w(\eta)w'(\eta)}{\eta^2} - \frac{w(\eta)w'(\eta)}{2} \\
\label{t4.7c} & = - \left[ \frac{(w(\eta))^2}{4} + \frac{w(\eta)w'(\eta)}{\eta} \right]' + \frac{(w'(\eta))^2}{\eta} - \frac{nw(\eta)w'(\eta)}{\eta^2}
\end{align}
for all $\eta \in (0,\infty)$.
Via Theorem \ref{t310}, $w(\eta) \to 0$ as $\eta \to \infty$ and hence there exists $\eta^* >0$ such that   
\begin{equation} \label{t4.7d}
|w(\eta)| \leq \left( \frac{2p(1-p)}{(1+p)} \right) ^{\frac{1}{(1-p)}} \quad \forall \eta \in [\eta^*, \infty).
\end{equation}
Additionally, given $F: [0,\infty) \to \mathbb{R}$, defined as in \eqref{s3i}, i.e.  
\begin{equation} \label{t4.7e}
F(\eta)=V(w(\eta), w'(\eta)) = \frac{(w'(\eta))^2}{2} - \frac{(w(\eta))^2}{2(1-p)} + \frac{|w(\eta)|^{1+p}}{(1+p)} \quad \forall \eta \in [0, \infty) ,
\end{equation}
we can refine our choice of $\eta^*$ in \eqref{t4.7d} so that we also have,
\begin{equation} \label{t4.7f}
0 \leq F(\eta) \leq \left( \frac{4((c(p))^{\frac{2}{1+p}}}{C(p)}\right)^{ \frac{(1+p)}{(1-p)}} \quad \forall \eta \in [\eta^*, \infty),
\end{equation}
with
\begin{equation} \label{t4.7g}
c(p)= \frac{1}{(1+p)}-\frac{1}{2} \quad \mathrm{and} \quad C(p)= \frac{2(1+p)}{(1-p)} +1.
\end{equation}
Thus, it follows from \eqref{t4.7e}, \eqref{t4.7d} and \eqref{t4.7c} respectively that 
\begin{align}
\nonumber \frac{F(\eta)}{\eta} &= \frac{(w'(\eta))^2}{2\eta} + \frac{1}{\eta} \left[-\frac{(w(\eta))^2}{2(1-p)} + \frac{|w(\eta)|^{1+p}}{(1+p)} \right] \\
\nonumber &\leq \frac{(w'(\eta))^2}{2\eta} + \frac{1}{\eta} \left[-\frac{(w(\eta))^2}{(1-p)} + |w(\eta)|^{1+p} \right] \\
\label{t4.7h} &= \frac{3(w'(\eta))^2}{2\eta} - \left[\frac{(w(\eta))^2}{4} + \frac{w(\eta)w'(\eta)}{\eta} \right]' -\frac{nw(\eta)w'(\eta)}{\eta^2},
\end{align}
for all $\eta \in [\eta^*, \infty)$. 
Since $F(\eta) \geq 0$ for all $\eta \in [0, \infty)$, together with the decay bound in Proposition \ref{p4.6} and Theorem \ref{t310}, it follows that we may integrate inequality \eqref{t4.7h} from $\eta$ ($\geq \eta^* > 1$) to $l$, and then allow $l \to \infty$, to obtain, 
\begin{align}
\nonumber \int_{\eta}^{\infty} \frac{F(t)}{t} dt &\leq \frac{3}{2} \int_{\eta}^{\infty} \frac{(w'(t))^2}{t} dt +\frac{(w(\eta))^2}{4} + \frac{w(\eta)w'(\eta)}{\eta} -n\int_{\eta}^{\infty} \frac{w(t)w'(t)}{t^2} dt \\
\label{t4.7i} &\leq \frac{(w(\eta))^2}{4} + \frac{1}{\eta}(1+n)\sup_{t\geq\eta}|w(t)w'(t)|+ \frac{3}{2} \int_{\eta}^{\infty} \frac{(w'(t))^2}{t} dt 
\end{align}
for all $\eta \in [\eta^*, \infty)$. 
Also, since $|w(\eta)| \leq (1-p)^{1/(1-p)}$ for all $\eta \in [0,\infty )$, we have
\begin{equation} \label{t4.7j}
F(\eta) \geq |w(\eta)|^{1+p}c(p) \geq 0 \quad \forall \eta \in [\eta^*, \infty) .
\end{equation}
Substituting \eqref{t4.7j} into \eqref{t4.7i} then yields
\begin{equation} \label{t4.7k}
0 < \int_{\eta}^{\infty} \frac{F(t)}{t} dt \leq \frac{1}{4}\left(\frac{F(\eta)}{c(p)}\right)^{\frac{2}{(1+p)}} +\frac{1}{\eta}(1+n)\sup_{t \geq \eta}|w(t)w'(t)| + \frac{3}{2} \int_{\eta}^{\infty} \frac{(w'(t))^2}{t} dt
\end{equation}
for $\eta \in [\eta^*, \infty)$. 
Observe that the right hand side of \eqref{t4.7k} is uniformly bounded for $\eta \in [\eta^*, \infty)$ via Proposition \ref{p4.6}.

Now suppose that there exists $k>0$ and $\sigma \geq 0$ such that 
\begin{equation} \label{t4.7l}
F(\eta) \leq \frac{k}{\eta^\sigma} \quad \forall \eta \in [\eta^*, \infty) .
\end{equation}
Via \eqref{t4.7j}, it follows that there exists a constant $c_{1} >0$ such that
\begin{equation} \label{t4.7m}
|w(\eta)| \leq \frac{c_{1}}{\eta^{\frac{\sigma}{(1+p)}}} \quad \forall \eta \in [\eta^*, \infty) .
\end{equation}
Thus, via Proposition \ref{p4.5} and \eqref{t4.7m}, there exists a constant $c_{2} >0$ such that 
\begin{equation} \label{t4.7n}
|w'(\eta)| \leq \frac{c_{2}}{\eta^{\frac{\sigma p}{(1+p)}+1}} \quad \forall \eta \in [\eta^*, \infty) .
\end{equation}
Hence, it follows from \eqref{t4.7k}-\eqref{t4.7n} and \eqref{t4.7f} that there exist constants $c_{3},c_{4},c_{5} >0$ such that
\begin{equation} \label{t4.7o}
\int_{\eta}^{\infty} \frac{F(t)}{t} dt \leq \frac{1}{4} \left (\frac{F(\eta)}{c(p)} \right)^{\frac{2}{(1+p)}} + \frac{c_{3}}{\eta^{\sigma+2}} + \frac{c_{4}}{\eta^{\frac{2\sigma p}{(1+p)}+2}} \leq \frac{F(\eta)}{C(p)} +\frac{c_{5}}{\eta^{\frac{2\sigma p}{(1+p)}+2}}
\end{equation}
for all $\eta \in [\eta^*,\infty)$. 
Setting $G:[\eta^*, \infty) \to \mathbb{R}$ to be
\begin{equation*} \label{t4.7p}
G(\eta)= \int_{\eta}^{\infty} \frac{F(t)}{t} dt \quad \forall \eta \in [\eta^*, \infty)
\end{equation*}
it follows from \eqref{t4.7o} that $G$ satisfies,
\begin{equation} \label{t4.7q}
(t^{C(p)}G(t))' \leq c_{6}t^{C(p) -3 - \frac{2 \sigma p}{(1+p)}} \quad \forall t \in [\eta^*, \infty)
\end{equation}
for some constant $c_6>0$. 
Provided that 
\begin{equation} \label{t4.7r}
C(p) -2 - \frac{2 \sigma p}{(1+p)} > 0,
\end{equation}
integrating inequality \eqref{t4.7q} from $\eta^*$ to $\eta$, yields
\begin{equation*} \label{t4.7s}
\eta^{C(p)}G(\eta) \leq \frac{c_{6}}{\left(C(p) -2 -\frac{2 \sigma p}{(1+p)} \right)} \frac{1}{\eta^{2+\frac{2\sigma p}{(1+p)}-C(p)}} + G(\eta^*)\eta^{*^{C(p)}} \quad \forall \eta \in [\eta^*, \infty),
\end{equation*}
for some constant $c_6>0$, and thus,
\begin{equation} \label{t4.7t}
G(\eta) \leq \frac{c_{7}}{\eta^{\frac{2\sigma p}{(1+p)} +2}} + \frac{c_{8}}{\eta^{C(p)}} \quad \forall \eta \in [\eta^*, \infty) ,
\end{equation}
for some constants $c_7,c_8>0$.
Recalling from \eqref{s3j}, that $F(\eta)$ is non-increasing on $[\eta^*,\infty)$, we have
\begin{equation} \label{t4.7u}
G(\eta) \geq \int_{\eta}^{2\eta} \frac{F(t)}{t} dt \geq \frac{1}{2} F(2\eta) \quad \forall \eta \in [\eta^*,\infty).
\end{equation}
Thus, it follows from \eqref{t4.7u} and \eqref{t4.7t} that 
\begin{equation} \label{t4.7v}
F(\eta) \leq \frac{c_{9}}{\eta^{\frac{2\sigma p}{(1+p)} +2}} + \frac{c_{10}}{\eta^{C(p)}} \quad \forall \eta \in [2\eta^*, \infty)
\end{equation}
for some constants $c_9,c_{10}>0$.
We now define $\bar{\sigma} : \left[0, \tfrac{2(1+p)}{(1-p)} \right] \times (0,1) \to \left[0, \tfrac{2(1+p)}{(1-p)} \right] $ given by 
\begin{equation*} \label{t4.7w}
\bar{\sigma} (\sigma, p)= \min \left\{ \frac{2\sigma p}{(1+p)} + 2, C(p) \right\} \quad \forall p \in (0,1),\ \sigma \in \left[ 0, \frac{2(1+p)}{(1-p)} \right].
\end{equation*}
Now since \eqref{t4.7l} is satisfied for $\sigma =0$ and $k=F(0)$, it follows from \eqref{t4.7v} that there exists a sequence $\{\sigma_{m}\}_{m \in \mathbb{N}}$ such that
\begin{equation} \label{t4.7x}
\sigma_{1} =0, \quad \sigma_{m+1} = \bar{\sigma}(\sigma_{m}, p)
\end{equation}
and 
\begin{equation} \label{t4.7y}
F(\eta) \leq \frac{k_{m}}{\eta^{\sigma_{m}}} \quad \forall \eta \in [\eta_m^*,\infty),
\end{equation}
for some constants $k_m>0$ ($m\in\mathbb{N}$, provided that $C(p) -3 - \frac{2 \sigma_m p}{(1+p)} > -1$, recalling \eqref{t4.7r}) and $\eta_m^*>0$. 
We obtain from \eqref{t4.7x} that,
\begin{equation} \label{t4.7z}
\sigma_{m} = \frac{2(1+p)}{(1-p)} - \frac{4p}{(1-p)} \left(\frac{2p}{(1+p)} \right)^{m-2} \quad \forall m \in \mathbb{N}
\end{equation}
and hence $\sigma_{m}$ is increasing with
\begin{equation} \label{t4.7aa}
\sigma_{m} \to \frac{2(1+p)}{(1-p)} \quad \text{as} \quad m \to \infty .
\end{equation}
Since
\begin{equation*}
C(p) -3 - \frac{2 \sigma_{m} p}{1+p} \geq \frac{1}{1-p} ( 2(1+p) -2(1-p) -4p) =0 >-1
\end{equation*}
it follows that $\sigma_{m}$ given by \eqref{t4.7z} satisfies \eqref{t4.7r} with $\sigma = \sigma_m$, and hence, via \eqref{t4.7y}, given $\epsilon >0$, there exists a sufficiently large $M \in \mathbb{N}$ such that 
\begin{equation} \label{t4.7ab}
|F(\eta)| \leq \frac{k_{M}}{\eta^{\sigma_{M}}} \quad \forall \eta \in [\eta^*, \infty)
\end{equation}
with $\sigma_{M} > \frac{2(1+p)}{1-p} - \epsilon(1+p)$ and $\eta^*=\eta_M^*$. 
Thus, via \eqref{t4.7ab} and \eqref{t4.7j}, there exists a constant $c_{11}>0$ such that 
\begin{equation} \label{t4.7ac}
|w(\eta)| \leq \frac{c_{11}}{\eta^{\frac{2}{(1-p)}- \epsilon}} \quad \forall [\eta^*, \infty).
\end{equation}
Since $|w(\eta)|$ is bounded, it follows that \eqref{t4.7ac} holds on $(0, \infty)$ (with a new constant $c_{1\epsilon}$). 
The proof is then completed by applying Proposition \ref{p4.5} to \eqref{t4.7ac} to obtain the conclusion for $|w'(\eta)|$, as required.
\end{proof}

\subsection{Oscillation of solutions to (P)}
We now establish that solutions to (P) oscillate as $\eta \to \infty$.
The approach we consider here relies on the uniform lower bound of solutions to the following Cauchy problem for a second order semi-linear parabolic partial differential equation related to [CP], given by:
\begin{align}
\label{ota} & u_t - \bigtriangleup u = \max\{ u,0 \}^p \ \ \ \text{ on } D_T, \\ 
\label{otb} & u=u_0 \ \ \ \text{ on }\partial D_T, \\ 
\label{otc} & u\in C^{2,1}(D_T)\cap C(\bar{D}_T) \cap L^\infty (\bar{D}_T) , 
\end{align}
with $0<p<1$ fixed, and $u_0: \partial D_T \to\field{R}$ is continuous, bounded, non-negative and non-zero on a set of positive Lebesgue measure. 
We denote the Cauchy problem given by \eqref{ota}-\eqref{otc} as [CP]$_+$. 
Moreover, we remark that [CP]$_+$ has been investigated in detail in \cite{ag} and notably, global existence and uniqueness of solutions has been established. 
To establish oscillation of solutions to (P), we construct a sequence of functions $\{ u^{(m)}\}_{m\in\field{N}}$ converging to a solution of [CP]$_+$ as $m\to\infty$ and compare the terms in the sequence to a solution of [CP] in a suitable subset of $\bar{D}_T$.

To begin, fix $u_0:\partial D_T\to\field{R}$ as specified in [CP]$_+$, and consider the sequence of Cauchy problems, given by: 
\begin{align}
\label{otd} & u_t^{(m)} - \bigtriangleup u^{(m)} = f_m(u^m) \ \ \ \text{ on } D_T, \\
\label{ote} & u^{(m)} =u_0 \ \ \ \text{ on } \partial D_T, \\
\label{otf} & u^{(m)}\in C^{2,1}(D_T)\cap C(\bar{D}_T) \cap L^\infty (\bar{D}_T) ,
\end{align}
for $m\in\field{N}$ and $f_m:\field{R}\to\field{R}$ given by
\begin{equation} \label{otg} f_m(u) = \begin{cases} 0, & u \leq 0 \\ m^{1-p}u , & 0 \leq u \leq \tfrac{1}{m}  \\ u^p , & u \geq \tfrac{1}{m} ,\end{cases} \end{equation}
with $0<p<1$.
For fixed $m\in\field{N}$ we refer to the Cauchy problem given by \eqref{otd}-\eqref{otg} as [CP]$_+^m$.

\begin{lem} \label{l41}
For fixed $u_0:\partial D_T\to\field{R}$, there exists a unique solution to [CP]$_+$ which we denote as $u:\bar{D}_\infty\to\field{R}$.
Moreover, for each $m\in\field{N}$, [CP]$_+^m$ has a unique solution $u^{(m)}:\bar{D}_\infty\to\field{R}$ which satisfies
\begin{equation*} \label{l41a} 
0 \leq u^{(m)}(x,t) \leq u^{(m+1)}(x,t) \leq u(x,t) \leq ((1-p)t + ||u_0||_\infty^{1-p})^{1/(1-p)} \ \ \ \forall (x,t)\in\bar{D}_\infty ,\ m\in\field{N} .
\end{equation*}
Additionally, 
\begin{equation} \label{l41b} 
\lim_{m\to\infty}u^{(m)}(x,t) = u(x,t) \ \ \ \forall (x,t)\in\bar{D}_\infty .
\end{equation}
\end{lem}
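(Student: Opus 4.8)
The plan is to handle the two families of Cauchy problems separately for existence and uniqueness, to chain their solutions together using the comparison theorem of \cite{ag}, and finally to identify the monotone pointwise limit with $u$ through the Duhamel formulation.

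First I would dispose of the regularised problems [CP]$_+^m$. The key observation is that $f_m$ in \eqref{otg} is globally Lipschitz continuous on $\field{R}$: each piece is non-decreasing, the two nonzero pieces agree at $u=\tfrac1m$ (since $m^{1-p}\cdot\tfrac1m=(\tfrac1m)^p$), and the slope is bounded by $m^{1-p}$. Since $f_m$ is globally Lipschitz, non-negative, non-decreasing and $u_0$ is continuous and bounded, standard well-posedness theory for semilinear parabolic Cauchy problems yields a unique local-in-time solution, extended to $\bar D_\infty$ via the a priori bound established below. For [CP]$_+$ itself I would invoke the global existence and uniqueness result of \cite{ag} directly, giving the solution $u$.

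Next I would establish the a priori bounds. Introduce the spatially homogeneous function $\bar u(t)=((1-p)t+\|u_0\|_\infty^{1-p})^{1/(1-p)}$ and check by direct differentiation that $\bar u'=\bar u^p$, so that $\bar u_t-\Delta\bar u=\bar u^p=\max\{\bar u,0\}^p\geq f_m(\bar u)$ while $\bar u(0)=\|u_0\|_\infty\geq u_0$; hence $\bar u$ is a supersolution for both [CP]$_+$ and every [CP]$_+^m$. With the trivial subsolution $0$ (using $f_m(0)=0$ and $u_0\geq 0$), the comparison theorem \cite[Theorem 2.8]{ag} gives $0\leq u^{(m)}\leq\bar u$ and $0\leq u\leq\bar u$. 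For the monotonicity in $m$ and the upper bound by $u$, the elementary ingredient is the pointwise ordering $f_m\leq f_{m+1}\leq\max\{\cdot,0\}^p$, which I would verify region by region, the only nontrivial case being $\tfrac1{m+1}\leq u\leq\tfrac1m$, where $m^{1-p}u\leq u^p$ reduces to $u\leq\tfrac1m$. Since each $f_m$ is non-decreasing, $u^{(m)}$ is then a subsolution of both [CP]$_+^{m+1}$ and [CP]$_+$ with the same data, so comparison produces the full chain $0\leq u^{(m)}\leq u^{(m+1)}\leq u\leq\bar u$.

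Finally, for \eqref{l41b}, the chain makes $\{u^{(m)}(x,t)\}_m$ non-decreasing and bounded above by $u(x,t)$, so it converges pointwise to some $\tilde u\leq u$. To identify $\tilde u$ with $u$ I would pass to the limit in the mild formulation
\[
u^{(m)}(\cdot,t)=S(t)u_0+\int_0^t S(t-s)\,f_m(u^{(m)}(\cdot,s))\,ds,
\]
where $S$ denotes the heat semigroup on $\field{R}^n$. Pointwise one has $f_m(u^{(m)}(x,s))\to\max\{\tilde u(x,s),0\}^p$: if $\tilde u(x,s)>0$ then $u^{(m)}(x,s)>\tfrac1m$ for large $m$ so $f_m(u^{(m)})=(u^{(m)})^p$, while if $\tilde u(x,s)=0$ then $u^{(m)}(x,s)=0$ and $f_m(u^{(m)})=0$. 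Moreover $f_m(u^{(m)})\leq f_{m+1}(u^{(m+1)})$, so the integrand is monotone in $m$ and the monotone convergence theorem permits passing the limit through the Gaussian integral, showing that $\tilde u$ satisfies the Duhamel formulation for [CP]$_+$. A parabolic regularity bootstrap applied to the uniformly bounded family $\{u^{(m)}\}$ then upgrades $\tilde u$ to a classical bounded solution of [CP]$_+$, whence $\tilde u=u$ by the uniqueness in \cite{ag}. I expect this last step, justifying the passage to the limit and the identification $\tilde u=u$, to be the main obstacle, precisely because the target nonlinearity $\max\{\cdot,0\}^p$ is non-Lipschitz; approximating it monotonically from below by the Lipschitz nonlinearities $f_m$ is exactly what converts the difficulty into a clean application of monotone convergence combined with the uniqueness result already available from \cite{ag}.
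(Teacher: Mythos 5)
Your proposal is correct and takes essentially the same approach as the paper: the paper obtains existence and uniqueness for [CP]$_+$ from \cite[Theorem 1.11]{ag} and \cite[Corollary 2.18]{ag}, handles each [CP]$_+^m$ by standard theory for Lipschitz nonlinearities (citing \cite{LUS}), and establishes the monotone chain together with \eqref{l41b} by following the monotone-approximation argument of \cite[Theorem 1.7]{ag} with the sequence $f_m$ in place of the approximating nonlinearities used there. Your explicit supersolution, comparison chain, and Duhamel/monotone-convergence identification of the limit via uniqueness are precisely a reconstruction of that cited argument.
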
 

\begin{proof}
Existence of a solution to [CP]$_+$ follows from \cite[Theorem 1.11]{ag}, and uniqueness follows from \cite[Corollary 2.18]{ag}.
Existence and uniqueness of solutions to [CP]$_m$ follows from standard theory since [CP]$_+$ is {\emph{a priori}} bounded on $\bar{D}_T$ for each $T>0$ and $f_m$ is locally Lipschitz continuous (see for example \cite{LUS}).
Since $f_m$ are locally uniformly H\"older continuous for all $m\in\field{N}$, by following the argument used to establish \cite[Theorem 1.7]{ag} with the sequence defined by \eqref{otd}-\eqref{otg} above (instead of \cite[(1.8)$_n$]{ag}) demonstrates that \eqref{l41b} holds.
\end{proof}

Immediately from Lemma \ref{l41} we have,

\begin{cor} \label{c42}
Let $u^{(m)}:\bar{D}_\infty\to\field{R}$ be as in Lemma \ref{l41}. 
Then,
\[ \sup_{m\in\field{N}}u^{(m)}(x,t) > ((1-p)t)^{1/(1-p)} \ \ \ \forall (x,t)\in D_\infty . \]
\end{cor}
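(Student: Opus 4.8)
The plan is to reduce the corollary to a strict comparison for the single limit solution $u$ of [CP]$_+$. By Lemma \ref{l41} the sequence $u^{(m)}$ is non-decreasing in $m$ and, via \eqref{l41b}, converges pointwise to $u$; hence $\sup_{m\in\field{N}} u^{(m)}(x,t) = \lim_{m\to\infty} u^{(m)}(x,t) = u(x,t)$ for every $(x,t)\in\bar{D}_\infty$. It therefore suffices to prove the strict inequality $u(x,t) > ((1-p)t)^{1/(1-p)}$ for all $(x,t)\in D_\infty$.

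Next I would introduce $v(x,t) = ((1-p)t)^{1/(1-p)}$ and observe that, being spatially homogeneous, it satisfies $v_t-\Delta v = v^p = \max\{v,0\}^p$ on $D_\infty$ with $v(\cdot,0)=0$; thus $v$ is a solution of the equation in [CP]$_+$ with trivial initial data. Since $u_0\geq 0 = v(\cdot,0)$ on $\partial D_\infty$, and both $u$ and $v$ grow at most like $t^{1/(1-p)}$ uniformly in $x$ (so both are bounded on $\bar{D}_T$ for each $T>0$), the comparison theorem \cite[Theorem 2.8]{ag} applies to the pair $(v,u)$ and yields $v\leq u$ on $\bar{D}_\infty$.

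To upgrade this to a strict inequality I would set $d=u-v\geq 0$. On $D_\infty$ both functions are classical and non-negative, so $d_t-\Delta d = \max\{u,0\}^p - \max\{v,0\}^p = u^p - v^p \geq 0$, that is, $d$ is a non-negative classical supersolution of the heat equation. Suppose for contradiction that $d(x_0,t_0)=0$ for some $(x_0,t_0)\in D_\infty$. Then $d$ attains its minimum value $0$ at an interior point, and since $d$ is bounded on the slab $\mathbb{R}^n\times(0,t_0]$ (as $u$ and $v$ are bounded there), the strong maximum principle for the heat equation (see \cite{prot}, applied after localization to balls to accommodate the unbounded spatial domain) forces $d\equiv 0$ on $\mathbb{R}^n\times(0,t_0]$. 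Letting $t\to 0^+$ and using $u\in C(\bar{D}_\infty)$ with $u(\cdot,0)=u_0$ and $v(\cdot,0)=0$ then gives $u_0\equiv 0$, contradicting that $u_0$ is non-zero on a set of positive Lebesgue measure. Hence $d>0$ throughout $D_\infty$, which is the desired strict bound, and the corollary follows via the reduction above.

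I expect the main difficulty to lie precisely in this last step, the promotion of the non-strict comparison $v\leq u$ to a strict one: this is exactly where the hypothesis that $u_0$ is positive on a set of positive measure becomes essential, since for $u_0\equiv 0$ one would instead have $u=v$. The parabolic smoothing encoded in the strong maximum principle converts positivity of $u_0$ on a positive-measure set into strict positivity of $d$ at every point with $t>0$; care is needed in invoking that principle on the unbounded domain $\mathbb{R}^n\times(0,t_0]$, which I would handle by working on balls $B_R\times(0,t_0]$ and exploiting the boundedness of $d$ on each finite-time slab. A secondary point to verify is that $v$ legitimately qualifies as the comparison function required by \cite[Theorem 2.8]{ag} on the unbounded domain, for which its explicit closed form and uniform-in-$x$ growth bound suffice.
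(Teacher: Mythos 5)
Your reduction is exactly the paper's: since $0 \leq u^{(m)} \leq u^{(m+1)} \leq u$ and $u^{(m)} \to u$ pointwise by Lemma \ref{l41}, one has $\sup_{m\in\field{N}} u^{(m)} = u$, so everything rests on the strict bound $u(x,t) > ((1-p)t)^{1/(1-p)}$ on $D_\infty$. The paper obtains this in one step by citing \cite[Lemma 2.2]{ag}, which \emph{is} precisely that statement (strict inequality included). Your proposal instead tries to re-derive it from a comparison theorem plus a strong maximum principle, and the gap lies in the comparison step.

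For the non-Lipschitz nonlinearity $\max\{u,0\}^p$ with $0<p<1$, no comparison theorem of the generic form you invoke --- ordering of bounded sub/supersolutions from ordering of their initial data --- can be true. Counterexample: $\overline{u}\equiv 0$ is a solution (hence supersolution) with trivial initial data, and $\underline{u}(x,t)=((1-p)t)^{1/(1-p)}$ is a solution (hence subsolution), bounded on each $\bar{D}_T$, with the same trivial initial data; yet $\underline{u}>\overline{u}$ for $t>0$. This is exactly the non-uniqueness phenomenon on which the whole paper is built. Consequently \cite[Theorem 2.8]{ag} must carry hypotheses beyond ``$v(\cdot,0)\leq u(\cdot,0)$ and both bounded on finite-time slabs'' --- hypotheses that exclude the example above and which, in \cite{ag}, are tied to the supersolution having initial data positive on a set of positive Lebesgue measure; there the lower bound needed to run such a comparison is essentially Lemma 2.2 itself. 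So verifying the applicability of the cited theorem is not the ``secondary point'' you flag (growth in $x$); it is the heart of the matter, and as written your argument either invokes the theorem with unverified, essential hypotheses or silently assumes what is to be proved. The rest of your argument --- upgrading $u\geq v$ to $u>v$ via the strong maximum principle for the heat operator applied to $d=u-v$, using positivity of $u_0$ on a positive-measure set and localization to balls --- is sound, but it is also superfluous once \cite[Lemma 2.2]{ag} is cited, since that lemma already asserts the strict inequality.
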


\begin{proof}
From \cite[Lemma 2.2]{ag} it follows that $u(x,t)>((1-p)t)^{1/(1-p)}$ for all $(x,t)\in D_\infty$. 
The result then follows from \eqref{l41b}.
\end{proof}  

From Lemma \ref{l41} and Corollary \ref{c42} we can establish that solutions to (P) with $0<\alpha <(1-p)^{1/(1-p)}$ have zeros in any neighbourhood of $\infty$. 

\begin{lem} \label{l43}
Let $w:[0,\infty )\to\field{R}$ be a solution to (P) with $0< \alpha < (1-p)^{1/(1-p)}$. 
Then, for any $\eta^*>0$, there exists $\eta \in [\eta^* , \infty )$ such that $w(\eta )=0$.
\end{lem}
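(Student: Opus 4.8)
The plan is to argue by contradiction, converting a hypothetical sign-definite tail of $w$ into a self-similar solution of [CP] that is positive on a space-time region, and then to contradict the strict lower bound on solutions of [CP]$_+$ recorded in Corollary \ref{c42}. Suppose the assertion fails: there is $\eta^*>0$ with $w(\eta)\neq 0$ for every $\eta\in[\eta^*,\infty)$. Using the symmetry of (P) under $w\mapsto -w$, I may assume $w(\eta)>0$ on $[\eta^*,\infty)$. Attach to $w$ the self-similar function $u(x,t)=w(|x|/\sqrt{t})\,t^{1/(1-p)}$, which by the construction in \S\ref{pf} solves [CP]; because Theorem \ref{c34} confines $(w,w')$ to the \emph{open} region $\Omega_{c^*(p)}$, this $u$ obeys the strict bound $|u(x,t)|<((1-p)t)^{1/(1-p)}$ on $D_\infty$. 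The crucial observation is that wherever $|x|>\eta^*\sqrt{t}$ one has $w(|x|/\sqrt t)>0$, so $u>0$ and $u|u|^{p-1}=u^p=\max\{u,0\}^p$; thus on this positivity region $u$ is a classical solution, in particular a supersolution, of the [CP]$_+$ equation \eqref{ota}.

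Next I would introduce the approximating family. For a conveniently chosen non-negative, non-trivial datum $u_0$, Lemma \ref{l41} supplies solutions $u^{(m)}$ of [CP]$_+^m$ with $u^{(m)}\uparrow u^{[CP]_+}$; since the Lipschitz nonlinearity satisfies $f_m(s)\le\max\{s,0\}^p$ for all $s$ (with equality for $s\le 0$ and for $s\ge 1/m$), each $u^{(m)}$ is a subsolution of \eqref{ota}. The goal is to apply the comparison theorem \cite[Theorem 2.8]{ag} to the subsolution $u^{(m)}$ and the supersolution $u$ on a suitable subset $Q$ of $\{|x|>\eta^*\sqrt t\}$, obtaining $u^{(m)}\le u$ on $Q$ for every $m$ and hence, in the limit, $u^{[CP]_+}\le u$ on $Q$. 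Together with the strict bound $u<((1-p)t)^{1/(1-p)}$ from above and $u^{[CP]_+}>((1-p)t)^{1/(1-p)}$ from Corollary \ref{c42}, this would force $((1-p)t)^{1/(1-p)}<u^{[CP]_+}\le u<((1-p)t)^{1/(1-p)}$ at an interior point of $Q$, the contradiction that proves $w$ must vanish on $[\eta^*,\infty)$.

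Choosing $Q$ so that the comparison theorem applies is where the real work lies. The obvious candidate $Q=\{|x|>\eta^*\sqrt t\}$ is inadmissible, because its initial slice at $t=0$ is all of $\mathbb{R}^n$, on which $u=0$ while $u^{(m)}=u_0\ge 0$, so the required boundary ordering fails at once. I would therefore work on a self-similar shell $Q=\{(x,t): a\sqrt t<|x|<b\sqrt t,\ 0<t\le T\}$ with $\eta^*\le a<b$: here $u>0$ throughout, and since the shell collapses to the origin as $t\to 0^+$, its parabolic boundary reduces to the two paraboloids $|x|=a\sqrt t$ and $|x|=b\sqrt t$, the troublesome initial slice degenerating to a single point. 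On these paraboloids $u$ equals $w(a)t^{1/(1-p)}$ and $w(b)t^{1/(1-p)}$, which are positive, and one takes $u_0$ supported away from the shell so that, for fixed $m$, the datum reaches the paraboloids only through diffusion and $u^{(m)}$ is exponentially small in $1/t$ there as $t\to 0^+$, hence dominated by the polynomial smallness $t^{1/(1-p)}$ of $u$.

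The main obstacle is uniformity in $m$. By Corollary \ref{c42} the limit $u^{[CP]_+}$ strictly exceeds the threshold $((1-p)t)^{1/(1-p)}$ everywhere, while $u$ lies strictly below it on the paraboloids; consequently the boundary inequality $u^{(m)}\le u$ can hold only on a time window $(0,T_m]$ that shrinks as $m\to\infty$, and a fixed $Q$ cannot be ordered on its boundary for all $m$ at once. The device I would use to repair this is the scale invariance of $u$ and of the shell $Q$ under $(x,t)\mapsto(\mu x,\mu^2 t)$: a comparison valid on a short window for the cutoff scale $1/m$ can be transported by rescaling to one reaching a prescribed target point of $Q$, at the cost of altering the cutoff, which is harmless in the limit $m\to\infty$ taken in Corollary \ref{c42}. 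Making this rescaling interact correctly with the $m$-dependent cutoff in $f_m$, so that the strict gap between $u^{[CP]_+}$ and $u$ survives, is the technical heart of the argument.
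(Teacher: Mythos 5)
Your proposal assembles the right ingredients (the approximating family $u^{(m)}$ from Lemma \ref{l41}, comparison at the level of the Lipschitz nonlinearity $f_m$ in \eqref{otg}, and the lower bound of Corollary \ref{c42}), and you correctly diagnose the fatal issue with this route: since $\sup_m u^{(m)}(x,t)>((1-p)t)^{1/(1-p)}>u(x,t)$ at every interior point, the boundary ordering $u^{(m)}\leq u$ on any fixed region with a fixed positive time window must fail for all large $m$, so each comparison only holds on a window $(0,T_m]$ shrinking with $m$. But the repair you propose does not close the argument, and you yourself leave it as the unproved ``technical heart.'' Concretely, the parabolic rescaling $v=\lambda^{-2/(1-p)}u^{(m)}(\lambda x,\lambda^2 t)$ that stretches $(0,T_m]$ to reach a fixed target point $(x_0,t_0)$ turns $u^{(m)}$ into a solution of the cutoff problem with a \emph{different} cutoff and, crucially, with initial datum $\lambda^{-2/(1-p)}u_0(\lambda\,\cdot)$; since $T_m\to 0$ forces $\lambda\to 0$, these data blow up and are no longer the fixed datum $u_0$. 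Corollary \ref{c42} controls $\sup_m u^{(m)}$ only for the fixed-datum family at a fixed point, so after rescaling the quantity you can bound at $(x_0,t_0)$ is $\lambda^{-2/(1-p)}u^{(m)}(\lambda x_0,\lambda^2 t_0)$, which for fixed $m$ (with $u_0$ supported away from the collapsing shell) is exponentially small — far below the threshold $((1-p)t_0)^{1/(1-p)}$ — and no contradiction results. The strict gap you hope to exploit does not survive the rescaling.

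The missing device, which is how the paper's proof avoids the obstacle entirely, is a \emph{time shift}: since \eqref{i1} is autonomous in $t$, one may use $\overline{u}(x,t)=\bigl|w\bigl(|x|/(t+\tfrac12)^{1/2}\bigr)\bigr|\,(t+\tfrac12)^{1/(1-p)}$ as the supersolution on the fixed exterior cylinder $\overline{\Omega^*}\times[0,T]$, $\Omega^*=\mathbb{R}^n\setminus\overline{B_{\eta^*}(0)}$. The shift makes $\overline{u}$ bounded below by a positive constant $g$ on the inner boundary $\partial B_{\eta^*}(0)\times[0,T]$ and strictly positive at $t=0$, while the subsolutions $u^{(m)}$, taken with datum $u_0\leq g/2$ supported inside $B_{\eta^*}(0)$, obey the $m$-\emph{independent} a priori bound of Lemma \ref{l41} and hence stay below $g$ up to a fixed time $T>0$ independent of $m$. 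This yields the parabolic-boundary ordering uniformly in $m$ on a fixed domain, so Lipschitz comparison (\cite[Theorem 4.4]{Meyer1}) applies for every $m$ at once; letting $m\to\infty$ and invoking Corollary \ref{c42} gives $\bigl|w\bigl(|x|/(t+\tfrac12)^{1/2}\bigr)\bigr|(t+\tfrac12)^{1/(1-p)}>((1-p)t)^{1/(1-p)}$ on $\overline{\Omega^*}\times(0,T]$. Note the endgame also differs from yours: the shifted supersolution is no longer dominated by $((1-p)t)^{1/(1-p)}$, so the contradiction is not with your strict pointwise bound but with the decay $w(\eta)\to 0$ as $\eta\to\infty$ from Theorem \ref{t310}. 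Without the time shift (or an equivalent device making the comparison uniform in $m$), your argument cannot be completed as sketched.
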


\begin{proof}
Suppose that for some $\eta^*>0$ that $w(\eta )\not= 0$ for all $\eta \in [\eta^* , \infty )$.
Now, define $\overline{u}:\overline{\Omega^*}\times \left[ 0,\tfrac{1}{2}\right]\to\field{R}$ as 
\begin{equation} \label{t43a} 
\overline{u}(x,t)= \left|w\left(\frac{|x|}{\left( t+\tfrac{1}{2}\right)^{1/2}}\right)\right|\left( t+\tfrac{1}{2}\right)^{1/(1-p)} \ \ \ \forall (x,t)\in \overline{\Omega^*}\times \left[ 0,\tfrac{1}{2}\right] ,
\end{equation}
with $\Omega^*:=\mathbb{R}^n\setminus \overline{B_{\eta^*}(0)}$ and with $B_r(x)$ representing the Euclidean ball in $\mathbb{R}^n$ of radius $r$ centred at $x\in\mathbb{R}^n$.
It follows immediately from \eqref{t43a} and the supposition, that
\begin{align} 
\label{t43b} & \overline{u}_t - \bigtriangleup \overline{u} - f_m(\overline{u}) = \overline{u}^p - f_m(\overline{u}) \geq 0 \ \ \ \text{ on }\Omega^*\times \left[ 0,\tfrac{1}{2}\right] , \\
\label{t43c} & \overline{u} \geq g \ \ \ \text{ on } \partial B_{\eta^*}(0) \times \left[ 0,\tfrac{1}{2}\right] , \\
\label{t43d} & \overline{u} \geq 0 \ \ \ \text{ on }  \overline{\Omega^*} \times\{ 0 \} , \\
\label{t43e} & \overline{u} \in C^{2,1}\left( \Omega^*\times \left( 0,\tfrac{1}{2}\right]\right) \cap C\left( \overline{\Omega^*}\times \left[ 0,\tfrac{1}{2}\right] \right) \cap L^\infty \left( \overline{\Omega^*}\times \left[ 0,\tfrac{1}{2}\right]\right) , 
\end{align}
with constant $g>0$ given by
\begin{equation*} g:= \inf_{\eta\in[\eta^*,\sqrt{2}\eta^*]} |w(\eta)| \left( \frac{1}{2}\right)^{1/(1-p)} , \end{equation*} 
and $f_m$ given by \eqref{otg}.
Now, set $\underline{u}:\bar{D}_\infty\to\field{R}$ to be $\underline{u}:=u^{(m)}$, with $u^{(m)}$ as in Lemma \ref{l41} for some $m\in\field{N}$ and fixed $u_0:\partial D_T\to\field{R}$ given by
\begin{equation} \label{t43f} 
u_0(x,0) = \begin{cases} \frac{g}{2}e^{-1/(\eta^*-|x|)}, & |x| \leq \eta^* \\ 0, & |x| \geq \eta^* .\end{cases} 
\end{equation}
Since $0 \leq u_0 \leq \tfrac{g}{2}$ on $\partial D_T$, it follows immediately from Lemma \ref{l41} and \eqref{t43f}, that 
\begin{align}
\label{t43g} & \underline{u}_t - \bigtriangleup \underline{u} - f_m(\underline{u}) = 0 \leq 0 \ \ \ \text{ on } \Omega^*\times (0,T], \\
\label{t43h} & \underline{u} \leq g \ \ \ \text{ on } \partial B_{\eta^*}(0) \times [0,T] , \\
\label{t43i} & \underline{u} = 0 \ \ \ \text{ on }  \overline{\Omega^*} \times\{ 0 \} , \\
\label{t43j} & \underline{u} \in C^{2,1}( \Omega^*\times (0,T])\cap C(\overline{\Omega^*}\times [0,T]) \cap L^\infty (\overline{\Omega^*}\times [0,T]) , 
\end{align}
with 
\[ T=\min\left\{ \frac{1}{2},\ \frac{g^{1-p}}{(1-p)}\left(1-\left(\frac{1}{2}\right)^{1-p}\right)\right\} .\] 

Therefore, from \eqref{t43b}-\eqref{t43e} and \eqref{t43g}-\eqref{t43j} respectively, it follows that $\overline{u}$ and $\underline{u}$ can be taken to be a bounded regular supersolution and a bounded regular subsolution on the $\overline{\Omega}^*\times [0,T]$ in \cite[Theorem 4.4]{Meyer1} (since $f_m$ is locally Lipschitz continuous), and hence 
\begin{equation} \label{t43k} 
\underline{u}\leq \overline{u} \ \ \ \text{ on } \overline{\Omega}^*\times [0,T] .
\end{equation} 
Since $m\in\field{N}$ used to define $\overline{u}$ is arbitrary, via \eqref{t43a}, \eqref{t43k} and Corollary \ref{c42}, it follows that 
\begin{equation} \label{t43l} 
\left|w\left(\frac{|x|}{\left( t+\tfrac{1}{2}\right)^{1/2}}\right)\right|\left( t+\tfrac{1}{2}\right)^{1/(1-p)} \geq \sup_{m\in\field{N}} u_m(x,t) > ((1-p)t)^{1/(1-p)} \ \ \ \forall (x,t)\in \overline{\Omega^*}\times (0,T] .
\end{equation}
Inequality \eqref{t43l} implies that $w(\eta )\not\to 0$ as $\eta \to \infty$, which contradicts Theorem \ref{t310}.
Hence, for every $\eta^*>0$, there exists some $\eta \in [\eta^* , \infty )$ such that $w(\eta ) = 0$, as required.  
\end{proof}

To establish that the zeros of non-trivial solutions to (P) are isolated, we have

\begin{lem} \label{l44}
Let $w:[0,\infty )\to\field{R}$ be a solution to (P) with $0 \leq \alpha < (1-p)^{1/(1-p)}$.
Suppose that there exists $\eta^*>0$ such that $(w(\eta^*),w'(\eta^*))=(0,0)$.
Then, $w\equiv 0$ on $[0,\infty )$.
\end{lem}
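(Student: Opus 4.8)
The plan is to introduce the \emph{coercive} energy $E:[0,\infty)\to\field{R}$ given by
$E(\eta)=\tfrac12(w'(\eta))^2+\tfrac{1}{1+p}|w(\eta)|^{1+p}$,
which is exactly $F(\eta)+\tfrac{1}{2(1-p)}(w(\eta))^2$ for $F$ as in \eqref{s3i}. The advantage of $E$ over $F$ is that $E\ge 0$ on $[0,\infty)$, with $E(\eta)=0$ if and only if $(w(\eta),w'(\eta))=(0,0)$. Setting $A=\{\eta\in[0,\infty):(w(\eta),w'(\eta))=(0,0)\}$, the hypothesis gives $\eta^*\in A$, and continuity of $(w,w')$ makes $A$ closed in $[0,\infty)$ (hence $A\cap(0,\infty)$ closed in $(0,\infty)$). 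Since $(0,\infty)$ is connected, it suffices to prove that $A\cap(0,\infty)$ is also open in $(0,\infty)$: then $A\cap(0,\infty)$ is a nonempty clopen subset of $(0,\infty)$, hence equals $(0,\infty)$, and continuity at $\eta=0$ forces $(w(0),w'(0))=(0,0)$, so $w\equiv 0$.

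The computation underpinning openness rests on the algebraic identity $H(w)+w|w|^{p-1}=\tfrac{1}{1-p}w$, valid for all $w\in\field{R}$ by \eqref{s2l}. Since $w\in C^2([0,\infty))$ and $|w|^{1+p}\in C^1$, differentiating $E$ and substituting \eqref{s2g} gives, for every $\eta>0$, $E'(\eta)=\tfrac{1}{1-p}w(\eta)w'(\eta)-\left(\tfrac{n-1}{\eta}+\tfrac{\eta}{2}\right)(w'(\eta))^2$. The key point is that the non-Lipschitz term $w|w|^{p-1}$ produced by differentiating $\tfrac{1}{1+p}|w|^{1+p}$ cancels \emph{exactly} against the corresponding term in $H(w)$, leaving a right-hand side controlled linearly by $E$. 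Indeed, $E$ yields $(w')^2\le 2E$ and $|w|^{1+p}\le(1+p)E$, whence $w^2\le\big((1+p)E\big)^{2/(1+p)}$; on any subinterval where $E\le 1$ this gives $w^2\le C(p)E$, since $2/(1+p)\ge 1$.

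Now fix $\eta_0\in A\cap(0,\infty)$. By continuity of $E$ and $E(\eta_0)=0$, choose $\delta\in(0,\eta_0)$ with $E<1$ on $I:=(\eta_0-\delta,\eta_0+\delta)\subset(0,\infty)$. On $I$ the coefficient $\tfrac{n-1}{\eta}+\tfrac{\eta}{2}$ is bounded by some $K<\infty$ (this is where $\eta_0>0$ is essential, the singularity of \eqref{s2g} at $\eta=0$ being avoided), and combining the formula for $E'$ with $|ww'|\le\tfrac12(w^2+(w')^2)$ and the estimates above produces a two-sided bound $|E'(\eta)|\le\tilde{C}E(\eta)$ on $I$, for a constant $\tilde{C}=\tilde{C}(p,n,\eta_0)$. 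Applying Gronwall's inequality to the non-increasing map $\eta\mapsto e^{-\tilde{C}\eta}E(\eta)$ on $[\eta_0,\eta_0+\delta)$ and to the non-decreasing map $\eta\mapsto e^{\tilde{C}\eta}E(\eta)$ on $(\eta_0-\delta,\eta_0]$, together with $E(\eta_0)=0$ and $E\ge 0$, forces $E\equiv 0$ on $I$. Hence $I\subset A$, establishing openness and completing the argument.

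The main obstacle, and the reason a direct uniqueness argument cannot be invoked, is precisely that $(0,0)$ is a non-Lipschitz equilibrium of \eqref{s2f}--\eqref{s2g}: standard ODE uniqueness fails there (this non-uniqueness being exactly what makes the two-signed solutions possible), so one cannot simply conclude backward in $\eta$ that the solution was identically zero. The device circumventing this is the choice of the coercive energy $E$, whose derivative is controlled linearly by $E$ once the offending H\"older term has been cancelled; verifying the two-sided differential inequality $|E'|\le\tilde{C}E$ near $\eta_0$ is the technical heart of the proof. I note finally that the case $\alpha=0$ is already covered by Remark \ref{r35}, so only $0<\alpha<(1-p)^{1/(1-p)}$ genuinely requires the above, although the connectedness argument applies uniformly.
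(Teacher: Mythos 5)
Your proposal is correct; every step checks out, including the key identity $E'(\eta)=\tfrac{1}{1-p}w(\eta)w'(\eta)-\left(\tfrac{n-1}{\eta}+\tfrac{\eta}{2}\right)(w'(\eta))^2$ (the chain rule is legitimate since $|\cdot|^{1+p}\in C^1(\field{R})$ for $0<p<1$) and the bound $w^2\le(1+p)^{2/(1+p)}E\le 2E$ on intervals where $E\le 1$. It is, however, a genuinely different route from the paper's. The paper stays with the non-coercive Lyapunov function $F=V(w,w')$ of \eqref{s3i}: it takes $\eta^*$ to be the smallest zero of $(w,w')$, gets $w\equiv 0$ on $[\eta^*,\infty)$ from the argument of Remark \ref{r35}, and propagates the vanishing backward only, rewriting \eqref{s3j} as $F'+2\left(\tfrac{n-1}{\eta}+\tfrac{\eta}{2}\right)F=-2\left(\tfrac{n-1}{\eta}+\tfrac{\eta}{2}\right)\left(\tfrac{w^2}{2(1-p)}-\tfrac{|w|^{1+p}}{1+p}\right)$, whose right-hand side is non-negative once $|w|$ is small; integrating $\left(\eta^{2(n-1)}e^{\eta^2/2}F\right)'\ge 0$ backward from $F(\eta^*)=0$ gives $F\le 0$ on a left neighbourhood of $\eta^*$, and positivity of $V$ near $(0,0)$ then forces $(w,w')\equiv(0,0)$ there, contradicting minimality of $\eta^*$; hence $\alpha=0$ and Remark \ref{r35} concludes. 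Your coercive energy $E=F+\tfrac{w^2}{2(1-p)}$ buys three simplifications: $E=0$ if and only if $(w,w')=(0,0)$, so the separate appeal to positivity of $V$ near the origin is unnecessary; the exact cancellation of the H\"older term yields a genuinely two-sided estimate $|E'|\le\tilde{C}E$, whereas the paper's identity gives a usable one-sided inequality only after the sign analysis of the potential term; and the two-sided Gronwall argument propagates the zero in both directions at once, so your clopen/connectedness framing replaces both the minimal-zero contradiction and the use of Remark \ref{r35} for the forward tail. What the paper's version buys in exchange is economy of means: it re-uses objects ($F$, identity \eqref{s3j}, Remark \ref{r35}) already central to \S \ref{gwp}, and needs only one-sided monotonicity. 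Both proofs ultimately rest on the same observation, namely that near the degenerate equilibrium the energy obeys a linear differential inequality even though the vector field is merely H\"older continuous there, which is exactly the feature that defeats naive uniqueness arguments.
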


\begin{proof}
Without loss of generality, suppose that $\eta^*\in (0,\infty )$ is the smallest value of $\eta$ at which $(w(\eta),w'(\eta))=(0,0)$.
Consider $F:[0,\infty )\to\field{R}$ as in \eqref{s3i}, i.e. 
\[ F(\eta ) = V(w(\eta ), w'(\eta ))\ \ \ \forall \eta \in [0,\infty ) . \]
It follows from the argument in Remark \ref{r35} that $w \equiv 0$ on $[\eta^*,\infty )$. 
Now, consider $\eta \in [0,\eta^*)$. 
Via \eqref{s3j}, $F\in C^1((0,\infty ))$ and satisfies, 
\[ F'(\eta ) = -\left(\frac{(n-1)}{\eta} + \frac{\eta}{2} \right)(w'(\eta ))^2 = 
-2\left(\frac{(n-1)}{\eta} + \frac{\eta}{2} \right)\left( F(\eta) + \frac{w(\eta)^2}{2(1-p)} - \frac{|w(\eta)|^{1+p}}{(1+p)} \right) . \]
Thus, 
\begin{equation} \label{l44a}
F'(\eta ) + 2\left(\frac{(n-1)}{\eta} + \frac{\eta}{2} \right)F(\eta) = -2\left(\frac{(n-1)}{\eta} + \frac{\eta}{2} \right) \left(  \frac{w(\eta)^2}{2(1-p)} - \frac{|w(\eta)|^{1+p}}{(1+p)} \right) .
\end{equation}
Since $(w(\eta^*),w'(\eta^*)) =( 0,0)$ and $w,w'\in C^1((0,\infty ))$ it follows from \eqref{l44a} that there exists $\eta_*\in (0,\eta^*)$ such that 
\[F'(\eta ) + 2\left(\frac{(n-1)}{\eta} + \frac{\eta}{2} \right)F(\eta) \geq 0 \ \ \ \forall \eta \in (\eta_*,\eta^* ] , \]
and so 
\begin{equation} \label{l44b} 
\left( \eta^{2(n-1)}e^{\frac{1}{2}\eta^2}F(\eta )\right)'\geq 0 \ \ \ \forall \eta \in (\eta_*,\eta^*] . 
\end{equation}
Since $F(\eta^*)=0$, an integration of \eqref{l44b} yields
\begin{equation} \label{l44c} F(\eta ) \leq 0 \ \ \ \forall \eta \in (\eta_*,\eta^*] . \end{equation}
Since $V \geq 0$ in a sufficiently small neighbourhood of $(0,0)$ it follows that \eqref{l44c} and our supposition that $w\equiv 0$ on $[\eta_*,\eta^*]$ which contradicts the definition of $\eta^*$.
Therefore, it follows that $\alpha = 0$ and via Remark \ref{r35}, $w\equiv 0$ on $[0,\infty )$, as required. 
\end{proof}

We conclude from Lemmas \ref{l43} and \ref{l44} that solutions to (P) with $0<\alpha <(1-p)^{1/(1-p)}$ do not have non-isolated zeros in $[0,\infty )$, but have infinitely many isolated zeros in $[\eta^*,\infty )$ for any $\eta^*\in [0,\infty )$ i.e. solutions to (P) with $0 < \alpha < (1-p)^{1/(1-p)}$ oscillate as $\eta \to \infty$.

\section{Conclusion} \label{conc}

By amalgamating the conclusions of Theorem \ref{t314}, Theorem \ref{t4.7}, Lemma \ref{l43} and Lemma \ref{l44} into a statement about [CP], we have established that [CP] has a 1-parameter family of spatially inhomogeneous radially symmetrical solutions $u_\alpha :\bar{D}_T\to\mathbb{R}$ (any $T>0$ and $0<\alpha < (1-p)^{1/(1-p)}$) that oscillate as $|x|\to\infty$ for $t\in (0,T]$ and for which, $||u_\alpha (\cdot , t) ||_q$ is bounded for each $t\in [0,T]$ for any $q>\tfrac{(1-p)n}{2}$.    

As a consequence of the theory developed in this paper, we state the following improvements to the theory concerning homoclinic connections in \cite{meyer} that can be established using analogous arguments to those given in this paper (for $(\alpha , \beta )\in \Omega_{c^*(p)}\setminus \{ (0,0) \}$):
the solution to problem \cite[(P)]{meyer} is unique;
the problem \cite[(P)]{meyer} is continuously dependent on its data; and
solutions to \cite[(P)]{meyer} oscillate as $\eta \to \pm \infty$. 
This addresses one outstanding query in the conclusion of \cite{meyer}.
However the conjectured decay estimate for solutions to (P) as $\eta \to \infty$ remains open. 

We highlight here that the novel approach to establish that solutions to (P) oscillate as $\eta\to\infty$ was motivated by an apparent lack of sufficient conditions on solutions to (P) to apply Sturmian oscillation theory. 
Specifically, the decay bounds established in Theorem \ref{t4.7}, when used in conjunction with Sturmian oscillation theory for second order linear ordinary differential equations (see, for example \cite[p.42-46]{prot} or \cite{kurt}) appear to be insufficient to establish the oscillatory properties of solutions to (P).
In this direction, we note that if one could establish that solutions to (P) decay sufficiently rapidly, for instance,
\begin{equation} \label{conc1} |w(\eta)| \leq \frac{(16 - \epsilon )^{1/(1-p)}}{\eta^{2/(1-p)}} \ \ \ \text{ as } \eta \to \infty , \end{equation}
for some $\epsilon >0$, then one could use the aforementioned oscillation theory to establish that solutions to (P) oscillate as $\eta \to \infty$.
We also note here that an attempt refine Theorem \ref{t4.7} to establish the decay bound in \eqref{conc1} was undertaken by explicitly retaining the constants $c_i$ in the proof of Theorem \ref{t4.7} and passing to the limit as $m\to\infty$, but this was unsuccessful.

Now that the oscillatory properties of solutions to (P) as $\eta\to\infty$ have been established, a decay estimate for solutions to (P), as motivated by the formal estimate in \cite{meyer}, can potentially be established, thus classifying the remaining important property of solutions to (P) for $0<\alpha < (1-p)^{1/(1-p)}$.

Finally, we highlight a fundamental issue that arises from the previous consideration of [CP]. 
Consider the Cauchy problem given by \eqref{i1}, \eqref{i3} and 
\begin{equation} \label{conc2} u_0 = w_\alpha (|x|) \ \ \ \forall (x,0) \in \partial D_T , \end{equation}
with $w_\alpha : [0,\infty )\to\mathbb{R}$ the solution to (P) with $0 < \alpha < (1-p)^{1/(1-p)}$.
Immediately we infer that the Cauchy problem given by \eqref{i1}, \eqref{conc2} and \eqref{i3} has a global solution $u:\bar{D}_\infty\to\mathbb{R}$, given by 
\begin{equation*} u(x,t) = w_\alpha \left( \frac{|x|}{(t+1)^{1/2}}\right)((1-p)(t+1))^{1/(1-p)} \ \ \ \forall (x,t)\in\bar{D}_\infty. \end{equation*} 
However, uniqueness (and consequently continuous dependence on initial data) of solutions to the Cauchy problem given by \eqref{i1}, \eqref{conc2} and \eqref{i3} is not trivially settled.
A method which determines whether or not uniqueness holds for the Cauchy problem given by \eqref{i1}, \eqref{conc2} and \eqref{i3} would be a useful addition to the methods available for well-posedness results for boundary value problems for nonlinear parabolic partial differential equations.

\section*{Acknowledgments}
The authors would like to thank Prof. D. J. Needham for his helpful comments in relation to the preparation of this manuscript.





\section*{References}

\end{document}